\newcommand{\nc}{\newcommand}
\newcommand{\delete}[1]{}
	\nc{\mlabel}[1]{\label{#1}}  
	\nc{\mcite}[1]{\cite{#1}}  
	\nc{\mref}[1]{\ref{#1}}  
	\nc{\mbibitem}[1]{\bibitem{#1}} 
	\nc{\mlabel}[1]{\label{#1}  
		{\hfill \hspace{1cm}{\small\tt{{\ }\hfill(#1)}}}}
	\nc{\mcite}[1]{\cite{#1}{\small{\tt{{\ }(#1)}}}}  
	\nc{\mref}[1]{\ref{#1}{{\tt{{\ }(#1)}}}}  
	\nc{\mbibitem}[1]{\bibitem[\bf #1]{#1}} 
\newtheorem{theorem}{Theorem}[section]
\newtheorem{prop}[theorem]{Proposition}
\newtheorem{coro}[theorem]{Corollary}
\theoremstyle{definition}
\newtheorem{defn}[theorem]{Definition}
\newtheorem{remark}[theorem]{Remark}
\newtheorem{exam}[theorem]{Example}
\newtheorem{prop-def}{Proposition-Definition}[section]
\newcommand\cal[1]{\mathcal{#1}}
\newcommand\alphlist{a,b,c,d,e,f,g,h,i,j,k,l,m,n,o,p,q,r,s,t,u,v,w,x,y,z}
\newcommand\Alphlist{A,B,C,D,E,F,G,H,I,J,K,L,M,N,O,P,Q,R,S,T,U,V,W,X,Y,Z}
\newcommand\getcmds[3]{\expandafter\newcommand\csname #2#1\endcsname{#3{#1}}}
\alphlist\do{\expandafter\getcmds\expandafter{\x}{frak}{\mathfrak}}
\Alphlist\do{\expandafter\getcmds\expandafter{\x}{frak}{\mathfrak}}
\nc{\bfk}{{\bf k}}
\font\cyr=wncyr10
\newfont{\scyr}{wncyr10 scaled 550}
\nc{\sha}{\mbox{\cyr X}}
\nc{\ssha}{\mbox{\bf \scyr X}}
\nc{\id}{\mathrm{id}}
\nc{\Id}{\mathrm{Id}}
\nc{\lbar}[1]{\overline{#1}}
\nc{\ot}{\otimes}
\nc{\dep}{\mathrm{dep}}
\nc{\tred}[1]{\textcolor{red}{#1}} \nc{\tgreen}[1]{\textcolor{green}{#1}}
\nc{\tblue}[1]{\textcolor{blue}{#1}} \nc{\tpurple}[1]{\textcolor{purple}{#1}}
\nc{\li}[1]{\tpurple{\underline{Li:}#1 }}
\nc{\liadd}[1]{\tpurple{#1}}
\nc{\xing}[1]{\tblue{\underline{Xing:}#1 }}
\nc{\dominique}[1]{\tblue{\underline{Dominique: }#1 }}
\nc{\yuan}[1]{\tred{\underline{Yuan:}#1 }}
\nc{\markus}[1]{\tred{\underline{Markus:} #1}}
\nc\hu[1]{\tgreen{\underline{Huhu:}#1}}
\newlength\xch
\newsavebox\dbox
\sbox\dbox{\tikz{\fill (0,0) circle (0.05cm);}}
\newif\ifqdd
\newif\ifzdd
\nc{\dnx}{\Delta_n A} \nc{\dx}{\Delta A} \nc{\dgp}{{\rm deg_{P}}}
\nc{\dgt}{{\rm deg_{T}}} \nc{\dg}{{\rm deg}} \nc{\ida}{ID($A$)} \nc{\tu}{\tilde{u}} \nc{\tv}{\tilde{v}}
\nc{\nr}{\calr_n} \nc{\nz}{\calz_n} \nc{\fun}{\cala_{n,d}}
\nc{\fbase}{\calb} \nc{\LF}{\mathrm{RF}} \nc{\FFA}{\mathrm{LF}} \nc{\irr}{\mathrm{Irr}}
\nc{\result}{\bfk\mathrm{Irr}(S_n)}  \nc{\I}{I_{\mathrm{ID},n}^0}
\nc{\nrs}{\calr_n^\star} \nc{\ii}{\mathrm{I}} \nc{\iii}{\mathrm{II}}
\nc{\intl}{{\rm int}}\nc{\ws}[1]{{#1}}\nc{\deleted}[1]{\delete{#1}}\nc{\plas}{placements\xspace}
\nc{\bim}[1]{#1}  \nc{\shaop}{\sha_{\Omega}^{+}}  \nc{\shao}{\sha_{\Omega}}
\nc{\bbim}[2]{#1 #2} \nc{\bbbim}[2]{#1,\, #2} \nc{\RBF}{{\rm RBF}}
\nc{\frb}{F_{\RB}} \nc{\shaf}{\ssha_{\tiny{\Omega}}} \nc{\sham}{\diamond_{\tiny{\Omega}}}
\nc{\lf}{\lfloor} \nc{\rf}{\rfloor} \nc{\shan}{\ssha_{\lambda}}
\nc{\rlex}{{\rm {lex}}} \nc{\bb}{\Box} \nc{\ra}{\rightarrow}
\nc{\e}{{\rm {e}}}
\nc{\DDF}{\mathrm{DD}(X,\,\Omega)}\nc{\DTF}{\mathrm{DT}(X,\,\Omega)} \nc{\DT}{\mathrm{DT}'(\Omega,\,V)}
\nc{\bra}{\mathrm{bra}} \nc{\bre}{\mathrm{bre}}
\nc{\dec}{\mathrm{dec}} \nc{\diamondw}{\diamond_{w}}
\nc{\type}{\mathrm{type}}
\nc\caF[1]{\cal{F}_{#1}(X,\,\Omega)}
\nc\calt{\cal{T}(X,\,\Omega)} \nc\caltn{\cal{T}_n(X,\,\Omega)}
\nc\caltbin{\cal{T}_b(X,\,\Omega)}
\nc\calta{\cal{T}_0(X,\,\Omega)}
\nc\caltb{\cal{T}_1(X,\,\Omega)}
\nc\caltc{\cal{T}_2(X,\,\Omega)}
\nc\caltd{\cal{T}_3(X,\,\Omega)}
\nc\caltm{\cal{T}_m(X,\,\Omega)}
\nc\calf{\cal{F}(X,\,\Omega)}
\nc\fram{\frak{M}(\Omega,\, X)}
\nc\shaw{\sha^{NC}_w(\Omega,\, X)}
\nc\dw{\diamond_w} \nc\dl{\diamond_\ell}
\nc\shal{\sha^{NC}_\ell(X,\, \Omega)} \nc\shav{\sha^{NC}_w(\Omega,\, V)} \nc\shat{\sha^{NC,1}_w(\Omega,\, T^{+}(V))}
\nc{\cfo}{\cal{F}(X,\,\Omega)}
\nc{\lar}{\varinjlim}
\nc\XO{(X,\,\Omega)}
\def\cxo#1#2;{\cal{#1}#2\XO}
\def\cxob#1#2;{\cal{#1}#2_b\XO}
\nc\lrf[2]{B_{#2}^+(#1)}
\nc{\fd}{\mathrm{\text{typed angularly decorated planar rooted trees}}}
\nc{\rb}{\mathrm{RBFWs}} \nc{\dfw}{\mathrm{DFW{(X)}}} \nc{\tfw}{\mathrm{TFW{(X)}}}
\nc{\tfv}{\mathrm{TFW{(V)}}} \nc{\rbf}{\mathrm{RBF}}
\nc{\db}{\mathrm{db}}
\nc{\st}{\mathrm{st}}
\def\Ve#1,#2,#3;{\vee_{#1,\,(#2,\,#3)}}
\def\bigv#1;#2;#3;{\bigvee\nolimits_{#1}^{#2;\,#3}}
\nc{\Irr}{\mathrm{Irr}} \nc{\lc}{\lfloor} \nc{\rc}{\rfloor}
\nc{\rswx}{\frak{M}( \Omega_R\sqcup \Omega_S, X)}
\nc{\rswxs}{\frak{M}^\star( \Omega_R\sqcup \Omega_S, X)}
\nc{\Dl}{\leq_{_{{\rm Dl}}}} \nc{\Dll}{<_{_{{\rm Dl}}}} \nc{\bbs}{\mathbb{S}}
\nc{\orbsa}{$\Omega$-Rota-Baxter system\xspace}
\nc{\orbsas}{$\Omega$-Rota-Baxter systems\xspace}
\nc{\mrbs}{matching Rota-Baxter system\xspace}
\nc{\mrbss}{matching Rota-Baxter systems\xspace}
\nc\prbsla[4]{{R}_{#1}\left(#3\right)R_{#2}\left(#4\right)}
\nc\prbsra[4]{{R}_{#1\rightarrow#2}\left(R_{#1\rhd#2}\left(#3\right)#4\right)+{R}_{#1\leftarrow#2}\left(#3S_{#1\lhd#2}\left(#4\right)\right)}
\nc\prbslb[4]{{S}_{#1}\left(#3\right)S_{#2}\left(#4\right)}
\nc\prbsrb[4]{{S}_{#1\rightarrow#2}\left(R_{#1\rhd#2}\left(#3\right)#4\right)+{S}_{#1\leftarrow#2}\left(#3S_{#1\lhd#2}\left(#4\right)\right)}
\nc\rbsla[4]{\lc #3 \rc ^{R}_{#1} \lc #4 \rc ^R_{#2}}
\nc\rbslb[4]{\lc #3\rc ^{S}_{#1}  \lc #4 \rc ^S_{#2}}
\nc\rbsray[4]{\lc \lc #3 \rc^R_{#1\rhd#2} #4 \rc ^{R}_{#1\rightarrow#2}}
\nc\rbsraz[4]{\lc #3 \lc #4\rc^S_{#1\lhd#2}\rc ^{R}_{#1\leftarrow#2}}
\nc\rbsrby[4]{\lc \lc #3\rc ^R_{#1\rhd#2}#4\rc ^{S}_{#1\rightarrow#2}}
\nc\rbsrbz[4]{\lc #3 \lc #4 \rc ^S_{#1\lhd#2}\rc ^{S}_{#1\leftarrow#2}}
\nc\rbsrac[4]{\lc #3 \lc #4\rc^R_{#1\lhd#2}\rc ^{R}_{#1\leftarrow#2}}
\nc\rbslq[4]{\lc #3 \rc ^{Q}_{#1} \lc #4 \rc ^Q_{#2}}
\nc\rbslt[4]{\lc #3\rc ^{T}_{#1}  \lc #4 \rc ^T_{#2}}
\nc\rbsrqy[4]{\lc \lc #3 \rc^R_{#1\rhd#2} #4 \rc ^{Q}_{#1\rightarrow#2}}
\nc\rbsrqz[4]{\lc #3 \lc #4\rc^S_{#1\lhd#2}\rc ^{Q}_{#1\leftarrow#2}}
\nc\rbsrty[4]{\lc \lc #3\rc ^R_{#1\rhd#2}#4\rc ^{T}_{#1\rightarrow#2}}
\nc\rbsrtz[4]{\lc #3 \lc #4 \rc ^S_{#1\lhd#2}\rc ^{T}_{#1\leftarrow#2}}
\nc{\obr}[1]{\lc #1 \rc_\omega^R} \nc{\obs}[1]{\lc #1 \rc_\omega^S} \nc{\obq}[1]{\lc #1 \rc_\omega^*}
\nc{\obqa}[1]{\lc #1 \rc_\alpha^*} \nc{\obqb}[1]{\lc #1 \rc_\beta^*}
\nc{\obra}[1]{\lc #1 \rc_\alpha^R} \nc{\obrb}[1]{\lc #1 \rc_\beta^R}
\nc{\obsa}[1]{\lc #1 \rc_\alpha^S} \nc{\obsb}[1]{\lc #1 \rc_\beta^S}
\begin{document}

\title[BiHom-$\Omega$-associative algebras and related structures]{BiHom-$\Omega$-associative algebras and related structures}

\author{Jiaqi Liu
}
\address{School of Mathematics and Statistics, Henan University, Henan, Kaifeng 475004, P.\,R. China}
\email{liujiaqi@henu.edu.cn
}
\author{Yuanyuan Zhang$^{*}$
}
\footnotetext{* Corresponding author.}
\address{School of Mathematics and Statistics, Henan University, Henan, Kaifeng 475004, P.\,R. China}
\email{zhangyy17@henu.edu.cn
}
%
%

\date{\today}

\begin{abstract}
In this paper, we first propose the concepts of BiHom-$\Omega$-associative algebras, BiHom-$\Omega$-dendriform algebras, BiHom-$\Omega$-pre-Lie algebras and BiHom-$\Omega$-Lie algebras. We then obtain a new BiHom-$\Omega$-associative (resp. Lie) algebra by defining a new multiplication on a BiHom-$\Omega$-associative (resp. Lie) algebra with the Rota-Baxter family of weight $\lambda$.
In addition, we generalize the classical relationships of associative algebras, pre-Lie algebras, dendriform algebras and Lie algebras to the BiHom-$\Omega$ version.
\end{abstract}

\makeatletter
\@namedef{subjclassname@2020}{\textup{2020} Mathematics Subject Classification}
\makeatother
\subjclass[2020]{
	16W99, 
	16S10, 
}

\keywords{BiHom-$\Omega$-associative algebra, BiHom-$\Omega$-dendriform algebra, BiHom-$\Omega$-pre-Lie algebra, BiHom-$\Omega$-Lie algebra, Rota-Baxter family}

\maketitle

\tableofcontents

\setcounter{section}{0}

\allowdisplaybreaks

\section{Introduction}

\subsubsection*{BiHom-algebras.}
The origin of Hom-structures may be found in the physics literature around 1990, concerning
$q$-deformations of algebras of vector fields, especially Witt and Virasoro algebras, see for instance~\cite{AizawaSaito,CZ,CMPJ,LKQ,ms}. In the last few years, many articles have generalized the classical algebraic structures to Hom-algebraic structure, see for instance \cite{AC2,AC3,BM,CG,CWZ,HSS}. A generalization has been given in~\cite{gmmp}, where the construction
of a Hom-category including a group action led to the concept of BiHom-type algebras. Yau posed the twisting principle, a main tool for constructing (Bi)Hom-type algebras. Up to now, many concepts of BiHom-type algebras have been proposed, such as BiHom-associative algebras~\cite{gmmp}, BiHom-Lie algebras~\cite{gmmp}, BiHom-(tri)dendriform algebras~\cite{BiHom-dendri}, BiHom-pre-Lie algebras~\cite{bihomprelie} and BiHom-PostLie algebras~\cite{BiHom-PostLie}.\\

\subsubsection*{$\Omega$-algebras.}
The notion of associative algebras relative to a commutative semigroup $\Omega$ first proposed by Aguiar in \cite{Aguiar}. The proposal of this concept has received widespread attention. Later, Das and Zhang et.al referred to $\Omega$-relative associative algebras as $\Omega$-associative algebras in articles~\cite[Definition 4.1]{Das} and~\cite{Zhang}, respectively. In this paper, we will continue to refer to $\Omega$-relative algebras as $\Omega$-algebras. In fact, $\Omega$-type algebras are a generalization of the original type algebras which correspond to the case in which the semigroup $\Omega$ is a single point set. Prior to this study, there was another generalization of algebraic structures, namely family algebra. Rota-Baxter family algebras~\cite{GL} were the first example of family algebraic structures. In recent, the concepts of (tri)dendriform family algebras~\cite{tri-family} and pre-Lie family algebras~\cite{prelief} were proposed by Zhang, Gao and Manchon. Let us take the dendriform algebra as an example to illustrate the relationship between $\Omega$-algebras and family algebras. In the definition of $\Omega$-algebraic structures, if the operation $\prec_{\alpha,\,\beta}$ is independent of $\alpha$ and the operation $\succ_{\alpha,\,\beta}$ is independent of $\beta$ in the $\Omega$-dendriform algebra $(D,\prec_{\alpha,\,\beta},\succ_{\alpha,\,\beta})_{\alpha,\,\beta\in \Omega}$, then $D$ reduces to a dendriform family algebra. In the definition of morphisms, the family algebra is a special case in the $\Omega$-algebra where the map $f_{\alpha}$ is independent of $\alpha$.\\

\subsubsection*{BiHom-$\Omega$-algebras.}
Inspired by BiHom-associative algebras and $\Omega$-associative algebras, we propose the concept of BiHom-$\Omega$-associative algebras. It's not only a promotion of BiHom-associative algebra, but also a generlization of $\Omega$-associative algebra. On the one hand, when the semigroup $\Omega$ is a trivial semigroup with one single element, a BiHom-$\Omega$-associative algebra is precisely a BiHom-associative algebra. On the other hand, when the structure maps of a BiHom-$\Omega$-associative algebra are all identity maps, the BiHom-$\Omega$-associative algebra reduces to an $\Omega$-associative algebra. In this paper, we mainly introduce the second generalized method. In order to better observe the relationships among different BiHom-$\Omega$-type algebras studied in this paper, we give the following commutative diagram.

	\[\xymatrix{
	\text{BiHom-$\Omega$-dendriform algebras}\ar[r]^-{\text{Prop\ref{BHdendpreLie}}}\ar@<.5ex>[dd]^{\text{Prop\ref{denf-to-Omegaasso}}}_{\text{Prop\ref{asso-den} } }& \text{BiHom-$\Omega$-pre-Lie algebras}\ar@<.5ex>[dd]^{\text{Prop\ref{pLf-to-OmegaLie}}}_{\text{Theorem\ref{lrprelie}}} \\
	 \;& \;& \text{BiHom-$\Omega$-PostLie algebras} \ar[lu]_{\text{Remk\ref{PostLie-Lie}~\ref{item:prelie}}}\ar[ld]^{\text{Prop\ref{interm}}}\\
	\text{BiHom-$\Omega$-associative algebras}\ar[r]^-{\text{Prop \ref{commutator}}}\ar[uur]^{\text{Prop\ref{asso-preLie}}}&\text{BiHom-$\Omega$-Lie algebras.}\ar@<.5ex>[ur]^{\text{Prop\ref{comgen}}}
	\ar"3,1";"1,1"\ar"3,2";"1,2".
}\]\\

\subsubsection*{The outline of this paper.}
In Section \ref{sec2}, we first propose the concepts of BiHom-$\Omega$-associative algebras and BiHom-$\Omega$-dendriform algebras, then we study the relationship between them (Proposition~\ref{denf-to-Omegaasso} and~\ref{asso-den}). Also we prove that a new BiHom-$\Omega$-associative algebra can be induced by a Rota-Baxter family of weight $\lambda$ on the BiHom-$\Omega$-associative algebra (Theorem~\ref{asso-RBO-asso}).
In Section \ref{sec3}, we mainly introduce BiHom-$\Omega$-pre-Lie algebras and BiHom-$\Omega$-Lie algebras and we also give the links between them (Proposition \ref{pLf-to-OmegaLie} and Theorem \ref{lrprelie}). Moreover, similar to Theorem~\ref{asso-RBO-asso}, we obtain a new BiHom-$\Omega$-Lie algebra by defining a new multiplication on a BiHom-$\Omega$-Lie algebra with the Rota-Baxter family of weight $\lambda$ (Theorem \ref{Proposition:BiHLie}). In Section \ref{sec5}, we first introduce the concept of BiHom-$\Omega$-PostLie algebra, then by studying the relationship between BiHom-$\Omega$-PostLie algebras and BiHom-$\Omega$-Lie algebras, we get Proposition~\ref{interm} and Proposition~\ref{comgen}, which are the generalizations of Theorem \ref{Proposition:BiHLie} and Theorem \ref{lrprelie}, respectively. Finally, we simply introduce the concept of BiHom-$\Omega$-pre-Possion algebras.\\

$\mathbf{Notation.}$ Throughout this paper, we fix a commutative unitary ring \bfk, which will be the base
ring of all algebras as well as linear maps. By an algebra we mean a
unitary associative noncommutative algebra, unless the contrary is specified. Denote by $\Omega $ a semigroup, unless otherwise specified. For the composition of two maps $ p $ and $ q $, we will write either $ p\circ q $ or simply $ pq. $

\section{BiHom-$\Omega$-associative algebras and BiHom-$\Omega$-dendriform algebras}\label{sec2}
\setcounter{equation}{0}
In this section, we mainly introduce the definitions of BiHom-$\Omega$-associative algebras and BiHom-$\Omega$-dendriform algebras, then we give some results of them.

\subsection{BiHom-$\Omega$-associative algebras}
In this subsection, we first introduce the BiHom version of $\Omega$-associative algebras and its Yau twist property, then we prove that a new BiHom-$\Omega$-associative algebra can be induced by a Rota-Baxter family of weight $\lambda$ on the BiHom-$\Omega$-associative algebra.
Now let's recall the related concepts of $\Omega$-associative algebras.

\begin{defn}\cite{Aguiar}
	An $\mathbf{\Omega}$-$\mathbf{associative \, algebra}$ $ (A, \cdot_{\alpha,\,\beta})_{\alpha,\,\beta\in \Omega} $ is a vector space $A$ equipped with a family of operations $(\cdot_{\alpha,\,\beta}: A \times A \rightarrow A)_{\alpha,\,\beta  \in \Omega} $ such that
	\begin{align}\label{Omega-ass-alg}
		(x\cdot_{\alpha,\,\beta} y)\cdot_{\alpha\,\beta,\,\gamma}z=x\cdot_{\alpha,\,\beta\,\gamma}(y\cdot_{\beta,\,\gamma}z),
	\end{align}
	for all $ x,y,z \in A,\, \alpha,\,\beta,\,\gamma \in \Omega.$
\end{defn}
\begin{defn}\cite{Aguiar}
	Let $ (A,\cdot_{\alpha,\,\beta})_{\alpha,\,\beta\in \Omega} $ and $ (A',\cdot'_{\alpha,\,\beta})_{\alpha,\,\beta\in \Omega} $ be two $\Omega$-associative algebras. A family of linear maps $ (f_{\alpha})_{\alpha\in \Omega}: A\rightarrow A' $ is called an $\mathbf{\Omega}$-$\mathbf{associative \, algebra \, morphism}$ if
	\begin{align}\label{Omega-ass-morphism}
		f_{\alpha\,\beta}(x\cdot_{\alpha,\,\beta}y)=f_{\alpha}(x)\cdot'_{\alpha,\,\beta}f_{\beta}(y),
	\end{align}
	for all $ x,y\in A,\,\alpha,\,\beta\in \Omega. $
\end{defn}

Inspired by the concepts of BiHom-associative algebras \cite{gmmp} and $\Omega$-associative algebras \cite{Aguiar}, now we introduce the BiHom-$\Omega$-associative algebras.

\begin{defn}\label{BiHom-Omega-asso}
	A $\mathbf{BiHom}$-$\mathbf{\Omega}$-$\mathbf{associative \, algebra}$ $( A,\bullet_{\alpha,\,\beta},p_{\alpha},q_{\alpha} )_{\alpha,\,\beta \in \Omega} $ is a vector space $A$ equipped with two commuting families of linear maps $p_{\alpha} , q_{\alpha}:A\rightarrow A$ and a family of bilinear maps $( \bullet_{\alpha,\,\beta}
	:A\otimes A\rightarrow A ) _{\alpha,\,\beta \in \Omega}$ such that
		\begin{align}\label{multiplicativity}
			p_{\alpha\,\beta} (x \bullet_{\alpha,\,\beta }y) =p_{\alpha}(x)\bullet_{\alpha,\,\beta}p_{\beta}(y)\;,\; q_{\alpha\,\beta}(x\bullet_{\alpha,\,\beta}y)=q_{\alpha}(x)\bullet_{\alpha,\,\beta}q_{\beta}(y)
			,\quad \text{(multiplicativity)}
		\end{align}
		\begin{align}\label{BiH-Omega-assoity}
			p_{\alpha}(x)\bullet_{\alpha,\,\beta\,\gamma}(y\bullet_{\beta,\,\gamma}z)=(x\bullet_{\alpha,\,\beta}y)\bullet_{\alpha\,\beta,\,\gamma}q_{\gamma}(z),\quad \text{(BiHom-$\Omega$-associativity)}
		\end{align}
	for all $x,
	y, z\in A,\,\alpha,\,\beta,\,\gamma \in \Omega$. The maps $p_{\alpha} $ and $q_{\alpha} $ (in this order) are called the structure maps of $A$.
\end{defn}

Obviously, if the structure maps of BiHom-$\Omega$-associative algebra $ (A,\bullet_{\alpha,\,\beta},p_{\alpha},q_{\alpha})_{\alpha,\,\beta\in \Omega} $ are the identity maps, then $A$ reduces to an $\Omega$-associative algebra.

\begin{defn}
	Let $(A, \bullet_{\alpha,\,\beta}, p_{\alpha},q_{\alpha})_{\alpha,\,\beta \in \Omega}$ and $(A', \bullet'_{\alpha,\,\beta}, p'_{\alpha},q'_{\alpha})_{\alpha,\,\beta \in \Omega}$ be two BiHom-$\Omega$-associative algebras. A family of linear maps $(f_{\alpha})_{\alpha \in \Omega}:A\rightarrow A'$ is called a $\mathbf{BiHom}$-$\mathbf{\Omega}$-$\mathbf{associative \, algebra}$ $\mathbf{ morphism}$ if $(f_{\alpha})_{\alpha\in \Omega}$ is an $\Omega$-associative algebra morphism and
	\[p'_{\alpha}\circ f_{\alpha}=f_{\alpha}\circ p_{\alpha} ,\quad q'_{\alpha}\circ f_{\alpha}=f_{\alpha}\circ q_{\alpha}, \quad\text{for all } \alpha\in \Omega.\]
\end{defn}

We give an example for BiHom-$\Omega$-associative algebras as follows.
\begin{exam}
	Let maps $ c: \Omega \times \Omega\rightarrow \mathbf{k},\;\rightthreetimes : \Omega \times \mathbf{k}\rightarrow \mathbf{k},\;\text{and}\;\leftthreetimes: \mathbf{k}\times \Omega \rightarrow \mathbf{k}$ satisfy the following conditions
	\[\alpha\,\beta\rightthreetimes 1_{k}=(\alpha\rightthreetimes 1_{k})(\beta\rightthreetimes1_{k}),\quad 1_{k}\leftthreetimes\alpha\,\beta=(1_{k}\leftthreetimes\alpha)(1_{k}\leftthreetimes\beta),\]
	\[c(\alpha,\,\beta)(1_{k}\leftthreetimes \gamma)c(\alpha\,\beta,\,\gamma)=c(\alpha,\,\beta\,\gamma)(\alpha\rightthreetimes1_{k})c(\beta,\,\gamma),\]
	for all $\alpha,\,\beta,\,\gamma\in \Omega,$ and $1_{k}$ is the unit of $\mathbf{k}$. We define the operations on the 2-dimensional unital space $\mathbf{k}\lbrace e_{1},e_{2}\rbrace$:
	\begin{align*}
		p_{\alpha}(e_{1}):=(\alpha\rightthreetimes1_{k})e_{1},\quad p_{\alpha}(e_{2}):=(\alpha\rightthreetimes 1_{k})e_{2},\\
		q_{\alpha}(e_{1}):=(1_{k}\leftthreetimes\alpha)e_{1},\quad q_{\alpha}(e_{2}):=(1_{k}\leftthreetimes\alpha)e_{1},\\
		e_{1}\bullet_{\alpha,\,\beta}e_{1}:=c(\alpha,\,\beta)e_{1},\quad
		e_{1}\bullet_{\alpha,\,\beta}e_{2}:=c(\alpha,\,\beta)e_{1},\\
		e_{2}\bullet_{\alpha,\,\beta}e_{1}:=c(\alpha,\,\beta)e_{2},\quad
		e_{2}\bullet_{\alpha,\,\beta}e_{2}:=c(\alpha,\,\beta)e_{2}.
	\end{align*}
Then $ (\mathbf{k}\lbrace e_{1},\,e_{2}\rbrace,\bullet_{\alpha,\,\beta},p_{\alpha},q_{\alpha})_{\alpha,\,\beta\in \Omega} $ is a BiHom-$\Omega$-associative algebra.
\end{exam}

Now we show that BiHom-$\Omega$-associative algebras can be obtained from the classical $\Omega$-associative algebras as follows.

\begin{prop}\label{asso-Yautwist}
	Let $(A, \cdot_{\alpha,\,\beta} )_{\alpha,\,\beta \in \Omega}$ be an $\Omega$-associative algebra. If $p_{\alpha} , q_{\alpha} :A\rightarrow A$ are two commuting $\Omega$-associative algebra morphisms and we define the multiplication on $A$ by
	\[x\bullet_{\alpha,\,\beta}y:=p_{\alpha}(x)\cdot_{\alpha,\,\beta} q_{\beta}(y), \quad \text{ for all }x, y\in A,\,\alpha,\,\beta\in \Omega.\] Then $(A, \bullet_{\alpha,\,\beta} , p_{\alpha} , q_{\alpha} )_{\alpha,\,\beta \in \Omega}$ is a BiHom-$\Omega$-associative algebra,
	called the Yau twist of $(A, \cdot_{\alpha,\,\beta} )_{\alpha,\,\beta \in\Omega}$.
\end{prop}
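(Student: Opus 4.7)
The proof is a direct verification of the two axioms \eqref{multiplicativity} and \eqref{BiH-Omega-assoity} from Definition~\ref{BiHom-Omega-asso}, using only that $p_\alpha, q_\alpha$ are $\Omega$-associative algebra morphisms that commute with each other. First I would handle multiplicativity. Expanding
\[p_{\alpha\beta}(x\bullet_{\alpha,\beta}y)=p_{\alpha\beta}\bigl(p_\alpha(x)\cdot_{\alpha,\beta}q_\beta(y)\bigr),\]
the $\Omega$-morphism identity \eqref{Omega-ass-morphism} for $p$ pushes $p_{\alpha\beta}$ inside the product, after which the commutation $p_\beta q_\beta=q_\beta p_\beta$ rearranges the right-hand factor to match $p_\alpha(x)\bullet_{\alpha,\beta}p_\beta(y)$. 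The corresponding identity for $q$ is completely symmetric.

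The core step is BiHom-$\Omega$-associativity \eqref{BiH-Omega-assoity}. I would expand the left-hand side twice via the definition of $\bullet_{\alpha,\beta}$, then distribute $q_{\beta\gamma}$ over $\cdot_{\beta,\gamma}$ using \eqref{Omega-ass-morphism} for $q$, and finally swap $q_\beta p_\beta$ with $p_\beta q_\beta$ by commutativity. This produces
\[p_\alpha^2(x)\cdot_{\alpha,\beta\gamma}\bigl(p_\beta q_\beta(y)\cdot_{\beta,\gamma}q_\gamma^2(z)\bigr).\]
Expanding the right-hand side analogously — now using \eqref{Omega-ass-morphism} for $p$ to distribute $p_{\alpha\beta}$ over $\cdot_{\alpha,\beta}$ — gives
\[\bigl(p_\alpha^2(x)\cdot_{\alpha,\beta}p_\beta q_\beta(y)\bigr)\cdot_{\alpha\beta,\gamma}q_\gamma^2(z).\]
A single application of the $\Omega$-associative axiom \eqref{Omega-ass-alg} for $\cdot_{\alpha,\beta}$, with $p_\alpha^2(x)$, $p_\beta q_\beta(y)$, $q_\gamma^2(z)$ as the three arguments, identifies the two expressions and closes the verification.

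There is no genuine obstacle; the argument is entirely bookkeeping. The only care needed is to keep the $\Omega$-indices aligned when invoking \eqref{Omega-ass-morphism} (so that $p_{\alpha\beta}$ distributes over $\cdot_{\alpha,\beta}$ as $p_\alpha$ on the first factor and $p_\beta$ on the second, and similarly $q_{\beta\gamma}$ over $\cdot_{\beta,\gamma}$), and to notice that after the twist-expansions both sides of \eqref{BiH-Omega-assoity} actually land inside the scope of a single instance of \eqref{Omega-ass-alg} with matching indices, so that commutativity of the two families plus the two morphism properties are exactly what is required to finish.
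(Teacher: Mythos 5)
Your proposal is correct and follows essentially the same route as the paper: multiplicativity via the morphism identity \eqref{Omega-ass-morphism} plus commutativity of $p_\beta$ and $q_\beta$, and BiHom-$\Omega$-associativity reduced to one application of \eqref{Omega-ass-alg} applied to $p_\alpha^2(x)$, $p_\beta q_\beta(y)$, $q_\gamma^2(z)$. The only cosmetic difference is that you expand both sides and meet in the middle, whereas the paper rewrites the left-hand side in a single chain ending at the right-hand side.
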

\begin{proof}
	First, we prove the multiplicativity property. For $ x,y,z\in A,\,\alpha,\,\beta,\,\gamma\in \Omega, $
	we have
	\begin{align*}
		p_{\alpha\,\beta}(x\bullet_{\alpha,\,\beta}y)&=p_{\alpha\,\beta}(p_{\alpha}(x)\cdot_{\alpha,\,\beta}q_{\beta}(y))\\
		&=p_{\alpha}^{2}(x)\cdot_{\alpha,\,\beta}p_{\beta}q_{\beta}(y)\hspace{1cm}\text{(by Eq.~(\ref{Omega-ass-morphism})})\\
		&=p_{\alpha}^{2}(x)\cdot_{\alpha,\,\beta}q_{\beta}p_{\beta}(y)\hspace{1cm}\text{(by $p_{\beta}\circ q_{\beta}=q_{\beta}\circ p_{\beta}$)}\\
		&=p_{\alpha}(x)\bullet_{\alpha,\,\beta}p_{\beta}(y).
	\end{align*}
Similarly, we get $ q_{\alpha\,\beta}(x\bullet_{\alpha,\,\beta}y)=q_{\alpha}(x)\bullet_{\alpha,\,\beta}q_{\beta}(y).$ Next, we prove the BiHom-$\Omega$-associativity, we have
\begin{align*}
	p_{\alpha}(x)\bullet_{\alpha,\,\beta\,\gamma}(y\bullet_{\beta,\,\gamma}z)&=p_{\alpha}(x)\bullet_{\alpha,\,\beta\,\gamma}(p_{\beta}(y)\cdot_{\beta,\,\gamma}q_{\gamma}(z))\\
	&=p_{\alpha}^{2}(x)\cdot_{\alpha,\,\beta\,\gamma}q_{\beta\,\gamma}(p_{\beta}(y)\cdot_{\beta,\,\gamma}q_{\gamma}(z))\\
	&=p_{\alpha}^{2}(x)\cdot_{\alpha,\,\beta\,\gamma}(q_{\beta}p_{\beta}(y)\cdot_{\beta,\,\gamma}q_{\gamma}^{2}(z))\hspace{1cm}\text{(by $(q_{\alpha})_{\alpha\in \Omega}$ satisfying Eq.~(\ref{Omega-ass-morphism})}\\
	&=(p_{\alpha}^{2}(x)\cdot_{\alpha,\,\beta}q_{\beta}p_{\beta}(y))\cdot_{\alpha\,\beta,\,\gamma}q_{\gamma}^{2}(z)\hspace{1cm}\text{(by Eq.~(\ref{Omega-ass-alg}))}\\
	&=p_{\alpha\,\beta}(p_{\alpha}(x)\cdot_{\alpha,\,\beta}q_{\beta}(y))\cdot_{\alpha\,\beta,\,\gamma}q_{\gamma}^{2}(z)\\
	&=p_{\alpha\,\beta}(x\bullet_{\alpha,\,\beta}y)\cdot_{\alpha\,\beta,\,\gamma}q_{\gamma}^{2}(z)\\
	&=(x\bullet_{\alpha,\,\beta}y)\bullet_{\alpha\,\beta,\,\gamma}q_{\gamma}(z).
\end{align*}
This completes the proof.
\end{proof}

The Yau twisting procedure for BiHom-$\Omega$-associative algebras admits a more general form, which we state in the next result.

\begin{prop}
	Let $(A, \bullet_{\alpha,\,\beta} ,p_{\alpha} , q_{\alpha} )_{\alpha,\,\beta \in \Omega}$ be a BiHom-$\Omega$-associative algebra. If $p'_{\alpha}, q'_{\alpha}:
	A\rightarrow A$ are two BiHom-$\Omega$-associative algebra morphisms and any two families of the maps $p_{\alpha}, q_{\alpha} ,
	p'_{\alpha}, q'_{\alpha}$ commute with each other. Define the multiplication on $A$ by
	\[x \bullet'_{\alpha,\,\beta}y:=p'_{\alpha}(x)\bullet_{\alpha,\,\beta} q'_{\beta}(y),	\]
	for all $ x, y\in A,\,\alpha,\,\beta\in \Omega,$ then $(A, \bullet'_{\alpha,\,\beta}, p_{\alpha} \circ p'_{\alpha}, q_{\alpha} \circ q'_{\alpha})_{\alpha,\,\beta \in \Omega}$
	is a BiHom-$\Omega$-associative algebra.
\end{prop}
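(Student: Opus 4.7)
The plan is to mirror the proof of Proposition~\ref{asso-Yautwist}, but with two layers of twisting instead of one. Writing $P_{\alpha}:=p_{\alpha}\circ p'_{\alpha}$ and $Q_{\alpha}:=q_{\alpha}\circ q'_{\alpha}$, I need to verify three things: (i) $P_{\alpha}$ and $Q_{\alpha}$ commute, (ii) the multiplicativity relations of Eq.~(\ref{multiplicativity}) hold for $\bullet'_{\alpha,\beta}$ with structure maps $P_{\alpha},Q_{\alpha}$, and (iii) the BiHom-$\Omega$-associativity of Eq.~(\ref{BiH-Omega-assoity}) holds for $\bullet'_{\alpha,\beta}$. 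Item~(i) is immediate: expanding $P_{\alpha}Q_{\alpha}=p_{\alpha}p'_{\alpha}q_{\alpha}q'_{\alpha}$ and using that the four families $p_{\alpha},q_{\alpha},p'_{\alpha},q'_{\alpha}$ commute pairwise, we can freely reorder to get $q_{\alpha}q'_{\alpha}p_{\alpha}p'_{\alpha}=Q_{\alpha}P_{\alpha}$.

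For item~(ii), I would compute
\[
P_{\alpha\beta}(x\bullet'_{\alpha,\beta}y)=p_{\alpha\beta}p'_{\alpha\beta}\bigl(p'_{\alpha}(x)\bullet_{\alpha,\beta}q'_{\beta}(y)\bigr).
\]
Since $p'_{\alpha}$ is a BiHom-$\Omega$-associative algebra morphism it satisfies Eq.~(\ref{Omega-ass-morphism}) with respect to $\bullet_{\alpha,\beta}$, and $p_{\alpha}$ is a structure map, so it satisfies Eq.~(\ref{multiplicativity}). Applying these in turn, and then using pairwise commutativity to move $p'_{\alpha}p'_{\alpha}$ past $p_{\alpha}$ on the left and $q'_{\beta}$ past $p_{\beta}p'_{\beta}$ on the right, yields exactly $P_{\alpha}(x)\bullet'_{\alpha,\beta}P_{\beta}(y)$. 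The analogous check for $Q_{\alpha\beta}$ is symmetric.

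For item~(iii), I would expand both sides of Eq.~(\ref{BiH-Omega-assoity}) in terms of $\bullet_{\alpha,\beta}$. The left side becomes
\[
p'_{\alpha}P_{\alpha}(x)\bullet_{\alpha,\beta\gamma}q'_{\beta\gamma}\bigl(p'_{\beta}(y)\bullet_{\beta,\gamma}q'_{\gamma}(z)\bigr),
\]
and after applying Eq.~(\ref{Omega-ass-morphism}) to $q'_{\beta\gamma}$ and using the commutativity of the four families to rearrange, this reads $p_{\alpha}(X)\bullet_{\alpha,\beta\gamma}(Y\bullet_{\beta,\gamma}Z)$ with $X=(p'_{\alpha})^{2}(x)$, $Y=p'_{\beta}q'_{\beta}(y)$, $Z=(q'_{\gamma})^{2}(z)$. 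Invoking the BiHom-$\Omega$-associativity of the original $\bullet_{\alpha,\beta}$ rewrites this as $(X\bullet_{\alpha,\beta}Y)\bullet_{\alpha\beta,\gamma}q_{\gamma}(Z)$. A parallel expansion of the right side $(x\bullet'_{\alpha,\beta}y)\bullet'_{\alpha\beta,\gamma}Q_{\gamma}(z)$, using that $p'_{\alpha\beta}$ is a morphism together with the commutativity assumptions, produces the same expression, completing the verification.

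I expect the only real bookkeeping obstacle to be item~(iii), specifically keeping track of which primed/unprimed maps must pass through each other and at which index; the strong hypothesis that \emph{any two} of the four families commute is precisely what makes this shuffling go through without further constraints. Once that is handled, every other step is a single application of either a morphism identity or the original BiHom-$\Omega$-associativity, in direct analogy with Proposition~\ref{asso-Yautwist}.
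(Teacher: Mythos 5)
Your proposal is correct and follows essentially the same route as the paper: both arguments reduce the BiHom-$\Omega$-associativity of $\bullet'_{\alpha,\beta}$ to that of $\bullet_{\alpha,\beta}$ by applying the $\Omega$-morphism identities for $p'_{\alpha\beta}$ and $q'_{\beta\gamma}$ together with the pairwise commutativity of the four families, the only cosmetic difference being that you expand both sides to a common middle expression while the paper runs a single chain of equalities from left to right (and, like you, dismisses the multiplicativity as routine).
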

\begin{proof}
	For $ x,y,z\in A,\,\alpha,\,\beta,\,\gamma\in \Omega ,$ the multiplicativity is obvious. Now we only need to prove the BiHom-$\Omega$-associativity.
	\begin{align*}
		p_{\alpha}\circ p_{\alpha}'(x)\bullet'_{\alpha,\,\beta\,\gamma}(y\bullet'_{\beta,\,\gamma}z)&=p_{\alpha}'p_{\alpha}p_{\alpha}'(x)\bullet_{\alpha,\,\beta\,\gamma}q_{\beta\,\gamma}'(y\bullet'_{\beta,\,\gamma}z)\\
		&=p_{\alpha}(p_{\alpha}')^{2}(x)\bullet_{\alpha,\,\beta\,\gamma}q_{\beta\,\gamma}'(p_{\beta}'(y)\bullet_{\beta,\,\gamma}q_{\gamma}'(z))\hspace{1cm}\text{(by $p_{\alpha}\circ p_{\alpha}'=p_{\alpha}\circ 'p_{\alpha}$})\\
		&=p_{\alpha}(p_{\alpha}')^{2}(x)\bullet_{\alpha,\,\beta\,\gamma}(q_{\beta}'p_{\beta}'(y)\bullet_{\beta,\,\gamma}(q_{\gamma}')^{2}(z))\\
		&\hspace{1cm}\text{(by $q_{\beta\,\gamma}'$ being a BiHom-$\Omega$-associative algebra morphism)})\\
		&=((p_{\alpha}')^{2}(x)\bullet_{\alpha,\,\beta}q_{\beta}'p_{\beta}'(y))\bullet_{\alpha\,\beta,\,\gamma}q_{\gamma}(q_{\gamma}')^{2}(z)\hspace{1cm}\text{(by Eq.~(\ref{BiH-Omega-assoity})})\\
		&=(p_{\alpha}'p_{\alpha}'(x)\bullet_{\alpha,\,\beta}p_{\beta}'q_{\beta}'(y))\bullet_{\alpha\,\beta,\,\gamma}q_{\gamma}'q_{\gamma}q_{\gamma}'(z)\\
		&\hspace{1cm}\text{(by $q_{\alpha},p_{\alpha}',q_{\alpha}'$ commuting with each other})\\
		&=p_{\alpha\,\beta}'(p_{\alpha}'(x)\bullet_{\alpha,\,\beta}q_{\beta}'(y))\bullet_{\alpha\,\beta,\,\gamma}q_{\gamma}'q_{\gamma}q_{\gamma}'(z)\\
		&\hspace{1cm}\text{(by $p_{\alpha\,\beta}'$ being a BiHom-$\Omega$-associative algebra morphism)})\\		&=p_{\alpha\,\beta}'(x\bullet'_{\alpha,\,\beta}y)\bullet_{\alpha\,\beta,\,\gamma}q_{\gamma}'q_{\gamma} q_{\gamma}'(z)\\
		&=(x\bullet'_{\alpha,\,\beta}y)\bullet'_{\alpha\,\beta,\,\gamma}q_{\gamma}\circ q_{\gamma}'(z).
	\end{align*}
This completes the proof.
\end{proof}

Let $\lambda\in \bf{k}$, a Rota-Baxter family of weight $\lambda$ on the BiHom-$\Omega$-algebra is defined as follows.

\begin{defn}
	Let $\Omega$ be a semigroup. A $\mathbf{Rota}$-$\mathbf{Baxter \, family \, of \, weight \,\lambda}$ on the BiHom-$\Omega$-algebra $ (A,\mu_{\alpha,\,\beta}, p_{\alpha}, q_{\alpha})_{\alpha,\,\beta\in \Omega} $
	is a collection of linear operators $ (R_{\alpha})_{\alpha\in \Omega} $ on $A$ such that
	\begin{align}\label{RB-Eq}
		\mu_{\alpha,\,\beta}(R_{\alpha}(x),R_{\beta}(y))=R_{\alpha\,\beta}(\mu_{\alpha,\,\beta}(R_{\alpha}(x),y)+\mu_{\alpha,\,\beta}(x,R_{\beta}(y))+\lambda \mu_{\alpha,\,\beta}(x,y)),
	\end{align}
	for all $ x,y\in A,\,\alpha,\,\beta\in \Omega $.

If further $(R_{\alpha})_{\alpha\in\Omega}$ commute with the structure maps, then $ (A,\mu_{\alpha,\,\beta}, p_{\alpha}, q_{\alpha}, R_{\alpha})_{\alpha,\,\beta\in \Omega} $ is called a $\bf{Rota\text{-}Baxter \, family \, BiHom\text{-}\Omega\text{-}algebra \, of \, weight \, \lambda}$.
\end{defn}

 The main purpose of the following result is to show that a new BiHom-$\Omega$-associative algebra can be constructed by the Rota-Baxter family of weight $\lambda$ on the BiHom-$\Omega$-associative algebra.

\begin{theorem}\label{asso-RBO-asso}
	Let $(A, \bullet_{\alpha,\,\beta} ,p_{\alpha}, q_{\alpha})_{\alpha,\,\beta\in \Omega}$ be a BiHom-$\Omega$-associative algebra. If $ (R_{\alpha})_{\alpha\in \Omega} $ is a Rota-Baxter family of weight $\lambda$ on $A$ satisfying
\[R_{\alpha}\circ p_{\alpha} =p_{\alpha} \circ R_{\alpha} \; \text{and} \; R_{\alpha}\circ q_{\alpha} =q_{\alpha} \circ R_{\alpha}.\]
Define a new operation on $A$ by
	\[x\star_{\alpha ,\,\beta}y:=x\bullet_{\alpha,\,\beta }R_{\beta}(y)+R_{\alpha}(x)\bullet_{\alpha,\,\beta }y+\lambda x\bullet_{\alpha,\,\beta}y,\]
	for all $x, y\in A,\,\alpha,\,\beta\in \Omega,$ then $(A, \star_{\alpha ,\,\beta}, p_{\alpha}, q_{\alpha})_{\alpha,\,\beta\in \Omega}$ is a BiHom-$\Omega$-associative algebra.
\end{theorem}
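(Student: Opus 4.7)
The plan is to verify the two axioms of Definition~\ref{BiHom-Omega-asso} for $(A,\star_{\alpha,\,\beta},p_{\alpha},q_{\alpha})_{\alpha,\,\beta\in\Omega}$ directly, using as black boxes (i) the multiplicativity and BiHom-$\Omega$-associativity of $\bullet_{\alpha,\,\beta}$, (ii) the Rota-Baxter family identity \eqref{RB-Eq}, and (iii) the hypothesis that $R_{\alpha}$ commutes with both $p_{\alpha}$ and $q_{\alpha}$.

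For multiplicativity, the argument is routine. Applying $p_{\alpha\,\beta}$ to each of the three summands in $x\star_{\alpha,\,\beta}y$ and using the multiplicativity of $\bullet_{\alpha,\,\beta}$ in \eqref{multiplicativity} yields three terms involving $p_{\alpha}(x)\bullet_{\alpha,\,\beta}p_{\beta}R_{\beta}(y)$, $p_{\alpha}R_{\alpha}(x)\bullet_{\alpha,\,\beta}p_{\beta}(y)$ and $\lambda p_{\alpha}(x)\bullet_{\alpha,\,\beta}p_{\beta}(y)$; commuting $p$ past $R$ on the first two gives precisely $p_{\alpha}(x)\star_{\alpha,\,\beta}p_{\beta}(y)$. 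The computation for $q_{\alpha\,\beta}$ is identical.

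The substantive step is the BiHom-$\Omega$-associativity
\[
p_{\alpha}(x)\star_{\alpha,\,\beta\,\gamma}(y\star_{\beta,\,\gamma}z)=(x\star_{\alpha,\,\beta}y)\star_{\alpha\,\beta,\,\gamma}q_{\gamma}(z).
\]
I would expand both sides completely. On the left, writing $W=y\star_{\beta,\,\gamma}z$ gives three outer summands, and the term $R_{\beta\,\gamma}(W)$ that appears inside the first summand collapses via \eqref{RB-Eq} to $R_{\beta}(y)\bullet_{\beta,\,\gamma}R_{\gamma}(z)$, which is the point of the weight-$\lambda$ definition of $\star$. After this collapse the left-hand side is a sum of seven terms of the form $p_{\alpha}(u)\bullet_{\alpha,\,\beta\,\gamma}(v\bullet_{\beta,\,\gamma}w)$ where each of $u,v,w$ is one of $x,y,z$ possibly pre-composed with $R$, together with the powers of $\lambda$ that arise. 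An analogous expansion of the right-hand side, again using \eqref{RB-Eq} on $R_{\alpha\,\beta}(x\star_{\alpha,\,\beta}y)=R_{\alpha}(x)\bullet_{\alpha,\,\beta}R_{\beta}(y)$, produces seven terms of the form $(u\bullet_{\alpha,\,\beta}v)\bullet_{\alpha\,\beta,\,\gamma}q_{\gamma}(w)$. Applying \eqref{BiH-Omega-assoity} to each left-hand term converts it into the shape appearing on the right; whenever a $q_{\gamma}$ lands on a term containing $R_{\gamma}$ the commutation $R_{\gamma}q_{\gamma}=q_{\gamma}R_{\gamma}$ (and the analogous relations for $p$) allows the two sides to be matched term by term.

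The main obstacle is purely bookkeeping: each side produces nine terms before the Rota-Baxter collapse and seven after it, and one has to verify that under the associativity rewrite they pair up exactly, including the $\lambda$ and $\lambda^{2}$ contributions. The key organisational idea is to track the pair $(R\text{-status of }x,R\text{-status of }y,R\text{-status of }z)$ of each summand; only the triple where all three are "bare" contributes the $\lambda^{2}$ term, only the three triples with exactly one bare entry contribute $\lambda$ terms, and the remaining triples are the $\lambda^{0}$ terms that match after a single application of \eqref{BiH-Omega-assoity}. No new commutation or identity beyond those already stated is needed, so once the bookkeeping is set up the verification is forced.
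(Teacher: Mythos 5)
Your proposal is correct and follows essentially the same route as the paper: verify multiplicativity directly, then expand both sides of the BiHom-$\Omega$-associativity, collapse the inner $R_{\beta\,\gamma}(y\star_{\beta,\,\gamma}z)$ and $R_{\alpha\,\beta}(x\star_{\alpha,\,\beta}y)$ via the Rota--Baxter identity (\ref{RB-Eq}) to get seven summands on each side, and match them term by term using Eq.~(\ref{BiH-Omega-assoity}) together with the commutation of $R_{\alpha}$ with $p_{\alpha},q_{\alpha}$. One tiny slip in your bookkeeping heuristic: the $\lambda$-coefficient terms are those with exactly one $R$-decorated entry (two bare entries), while the $\lambda^{0}$ terms carry two $R$'s (one bare entry) --- you have these two classes swapped, but this does not affect the validity of the argument.
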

\begin{proof}
	For any $ x,y,z\in A,$ $\alpha,\,\beta,\,\gamma \in \Omega,$ we have $ p_{\alpha}\circ q_{\alpha}=q_{\alpha}\circ p_{\alpha} $ and
	\begin{align*}
		p_{\alpha\,\beta}(x \star_{\alpha ,\, \beta}y)&=p_{\alpha\,\beta}(x\bullet_{\alpha,\,\beta }R_{\beta}(y)+R_{\alpha}(x)\bullet_{\alpha,\,\beta }y+\lambda x\bullet_{\alpha,\,\beta}y)\\
		&=p_{\alpha\,\beta}(x\bullet_{\alpha,\,\beta}R_{\beta}(y))+p_{\alpha\,\beta}(R_{\alpha}(x)\bullet_{\alpha,\,\beta}y)+\lambda p_{\alpha\,\beta}(x\bullet_{\alpha,\,\beta}y)\\
		&=p_{\alpha}(x)\bullet_{\alpha,\,\beta}p_{\beta}R_{\beta}(y)+p_{\alpha}R_{\alpha}(x)\bullet_{\alpha,\,\beta}p_{\beta}(y)+\lambda p_{\alpha}(x)\bullet_{\alpha,\,\beta}p_{\beta}(y)\\
		&\hspace{1cm}\text{(by Eq.~(\ref{multiplicativity}))}\\
		&=p_{\alpha}(x)\bullet_{\alpha,\,\beta}R_{\beta}(p_{\beta}(y))+R_{\alpha}(p_{\alpha}(x))\bullet_{\alpha,\,\beta}p_{\beta}(y)+\lambda p_{\alpha}(x)\bullet_{\alpha,\,\beta}p_{\beta}(y)\\
		&=p_{\alpha}(x)\star_{\alpha ,\, \beta}p_{\beta}(y).
	\end{align*}
	Similarly, we get $q_{\alpha\,\beta}(x \star_{\alpha ,\,\beta} y )=q_{\alpha}(x)\star_{\alpha ,\, \beta}q_{\beta}(y).$ On the one hand, we have
	\begin{align*}
		(x\star_{\alpha ,\, \beta}y)\star_{\alpha\beta,\,\gamma}q_{\gamma}(z)=&(x\bullet_{\alpha,\,\beta}R_{\beta}(y)+R_{\alpha}(x)\bullet_{\alpha,\,\beta}y+\lambda x\bullet_{\alpha,\,\beta}y)\star_{\alpha\beta,\,\gamma}q_{\gamma}(z)\\
=&(x\bullet_{\alpha,\,\beta}R_{\beta}(y)+R_{\alpha}(x)\bullet_{\alpha,\,\beta}y+\lambda x\bullet_{\alpha,\,\beta}y)\bullet_{\alpha\beta,\,\gamma}R_{\gamma}q_{\gamma}(z)\\
		&+R_{\alpha\,\beta}(x\bullet_{\alpha,\,\beta}R_{\beta}(y)+R_{\alpha}(x)\bullet_{\alpha,\,\beta}y+\lambda x\bullet_{\alpha,\,\beta}y)\bullet_{\alpha\beta,\,\gamma}q_{\gamma}(z)\\
		&+\lambda(x\bullet_{\alpha,\,\beta}R_{\beta}(y)+R_{\alpha}(x)\bullet_{\alpha,\,\beta}y+\lambda x\bullet_{\alpha,\,\beta}y)\bullet_{\alpha\beta,\,\gamma}q_{\gamma}(z)\\
		=&(x\bullet_{\alpha,\,\beta}R_{\beta}(y))\bullet_{\alpha\beta,\,\gamma}q_{\gamma}R_{\gamma}(z)+(R_{\alpha}(x)\bullet_{\alpha,\,\beta}y)\bullet_{\alpha\beta,\,\gamma}q_{\gamma}R_{\gamma}(z)\\
		&+\lambda(x\bullet_{\alpha,\,\beta}y)\bullet_{\alpha\beta,\,\gamma}q_{\gamma}R_{\gamma}(z)+(R_{\alpha}(x)\bullet_{\alpha,\,\beta}R_{\beta}(y))\bullet_{\alpha\beta,\,\gamma}q_{\gamma}(z)\\
		&+\lambda(x\bullet_{\alpha,\,\beta}R_{\beta}(y))\bullet_{\alpha\beta,\,\gamma}q_{\gamma}(z)+\lambda(R_{\alpha}(x)\bullet_{\alpha,\,\beta}y)\bullet_{\alpha\beta,\,\gamma}q_{\gamma}(z)\\
		&+\lambda^{2}(x\bullet_{\alpha,\,\beta}y)\bullet_{\alpha\beta,\,\gamma}q_{\gamma}(z)\hspace{1cm}\text{(by Eq.~(\ref{RB-Eq}) and $ R_{\gamma}\circ q_{\gamma}=q_{\gamma}\circ R_{\gamma}$)}\\
		=&p_{\alpha}(x)\bullet_{\alpha,\,\beta\gamma}(R_{\beta}(y)\bullet_{\beta,\,\gamma}R_{\gamma}(z))+p_{\alpha}R_{\alpha}(x)\bullet_{\alpha,\,\beta\gamma}(y\bullet_{\beta,\,\gamma}R_{\gamma}(z))\\
		&+\lambda p_{\alpha}(x)\bullet_{\alpha,\,\beta\gamma}(y \bullet_{\beta,\,\gamma}R_{\gamma}(z))+p_{\alpha}R_{\alpha}(x)\bullet_{\alpha,\,\beta\gamma}(R_{\beta}(y)\bullet_{\beta,\,\gamma}z)\\
		&+\lambda p_{\alpha}(x)\bullet_{\alpha,\,\beta\gamma}(R_{\beta}(y)\bullet_{\beta,\,\gamma}z)+\lambda p_{\alpha}R_{\alpha}(x)\bullet_{\alpha,\,\beta\gamma}(y\bullet_{\beta,\,\gamma}z)\\
		&+\lambda^{2}p_{\alpha}(x)\bullet_{\alpha,\,\beta\gamma}(y\bullet_{\beta,\,\gamma}z).\hspace{0.3cm} \text{(by Eq.~(\ref{BiH-Omega-assoity}))}
	\end{align*}
	On the other hand, we have
	\begin{align*}
		p_{\alpha}(x)\star_{\alpha,\,\beta\gamma}(y \star_{\beta,\,\gamma}z)=&p_{\alpha}(x)\star_{\alpha,\,\beta\,\gamma}(y\bullet_{\beta,\,\gamma}R_{\gamma}(z)+R_{\beta}(y)\bullet_{\beta,\,\gamma}z+\lambda y\bullet_{\beta,\,\gamma}z)\\
		=&p_{\alpha}(x)\bullet_{\alpha,\,\beta\gamma}R_{\beta \gamma}(y\bullet_{\beta,\,\gamma}R_{\gamma}(z)+R_{\beta}(y)\bullet_{\beta,\,\gamma}z+\lambda y\bullet_{\beta,\,\gamma}z)\\
		&+R_{\alpha}p_{\alpha}(x)\bullet_{\alpha,\,\beta\gamma}(y\bullet_{\beta,\,\gamma}R_{\gamma}(z)+R_{\beta}(y)\bullet_{\beta,\,\gamma}z+\lambda y\bullet_{\beta,\,\gamma}z)\\
		&+\lambda p_{\alpha}(x)\bullet_{\alpha,\,\beta\gamma}(y\bullet_{\beta,\,\gamma}R_{\gamma}(z)+R_{\beta}(y)\bullet_{\beta,\,\gamma}z+\lambda y\bullet_{\beta,\,\gamma}z)\\
		=&p_{\alpha}(x)\bullet_{\alpha,\,\beta\gamma}(R_{\beta}(y)\bullet_{\beta,\,\gamma}R_{\gamma}(z))+p_{\alpha}R_{\alpha}(x)\bullet_{\alpha,\,\beta\gamma}(y\bullet_{\beta,\,\gamma}R_{\gamma}(z))\\
		&+p_{\alpha}R_{\alpha}(x)\bullet_{\alpha,\,\beta\gamma}(R_{\beta}(y)\bullet_{\beta,\,\gamma}z)+\lambda p_{\alpha}R_{\alpha}(x)\bullet_{\alpha,\,\beta\gamma}(y\bullet_{\beta,\,\gamma}z)\\
		&+\lambda p_{\alpha}(x)\bullet_{\alpha,\,\beta\gamma}(y\bullet_{\beta,\,\gamma}R_{\gamma}(z))+\lambda p_{\alpha}(x)\bullet_{\alpha,\,\beta\gamma}(R_{\beta}(y)\bullet_{\beta,\,\gamma}z)\\
		&+\lambda^{2}p_{\alpha}(x)\bullet_{\alpha,\,\beta\gamma}(y\bullet_{\beta,\,\gamma}z)\hspace{1cm} \text{(by Eq.~(\ref{RB-Eq})} \text{ and } R_{\alpha}\circ q_{\alpha}=q_{\alpha}\circ R_{\alpha} ).
	\end{align*}
	By comparing the items of both sides, we get $ (x\star_{\alpha ,\, \beta}y)\star_{\alpha\beta,\,\gamma}q_{\gamma}(z)=p_{\alpha}(x)\star_{\alpha,\,\beta\gamma}(y \star_{\beta,\,\gamma}z). $
\end{proof}

\begin{remark}
	In Theorem \ref{asso-RBO-asso}, we notice that $(R_{\alpha})_{\alpha\in \Omega}$ is also a Rota-Baxter family of weight $\lambda $ for the BiHom-$\Omega$-associative algebra
	$(A, \star_{\alpha , \,\beta}, p_{\alpha} ,q_{\alpha})_{\alpha,\,\beta\in \Omega}$.
\end{remark}

\subsection{BiHom-$\Omega$-dendriform algebras}\label{sub2.2}

In this subsection, we mainly introduce the BiHom-$\Omega$-dendriform algebra, which is a generalization of $\Omega$-dendriform algebras~\cite{Aguiar}. Further, we give the relationship between BiHom-$\Omega$-dendriform algebras and BiHom-$\Omega$-associative algebras. For this, let us first briefly recall the definition of $\Omega$-dendriform algebras.

\begin{defn}\cite{Aguiar}
	An $\mathbf{\Omega}$-$\mathbf{dendriform \, algebra}$ $ (A,\prec_{\alpha,\,\beta},\succ_{\alpha,\,\beta})_{\alpha,\,\beta\in \Omega} $ is a vector space $A$ equipped with two families of bilinear maps $ (\prec_{\alpha,\,\beta},\,\succ_{\alpha,\,\beta}: A\times A\rightarrow A)_{\alpha,\,\beta\in \Omega} $ such that
	\begin{align}
		(x\prec_{\alpha,\,\beta}y)\prec_{\alpha\,\beta,\,\gamma}z
=&x\prec_{\alpha,\,\beta\,\gamma}(y\prec_{\beta,\,\gamma}z
+y\succ_{\beta,\,\gamma}z),\label{Ome-den-1}\\
		(x\succ_{\alpha,\,\beta}y)\prec_{\alpha\,\beta,\,\gamma}z
=&x\succ_{\alpha,\,\beta\,\gamma}(y\prec_{\beta,\,\gamma}z),\label{Ome-den-2}\\
		(x\prec_{\alpha,\,\beta}y+x\succ_{\alpha,\,\beta}y)\succ_{\alpha\,\beta,\,\gamma}z=&x \succ_{\alpha,\,\beta\,\gamma}(y\succ_{\beta,\,\gamma}z)\label{Ome-den-3}
	\end{align}
for all $ x,y,z\in A,\,\alpha,\,\beta,\,\gamma\in \Omega.$
\end{defn}
\begin{defn}
Let $ (A,\prec_{\alpha,\,\beta},\succ_{\alpha,\,\beta})_{\alpha,\,\beta\in \Omega} $ and  $ (A',\prec_{\alpha,\,\beta}',\succ_{\alpha,\,\beta}')_{\alpha,\,\beta\in \Omega} $ be two $\Omega$-dendriform algebras. A family of linear maps $ (f_{\alpha})_{\alpha\in\Omega}: A\rightarrow A' $ is called an $\mathbf{\Omega}$-$\mathbf{dendriform \, algebra \, morphism}$ if
\begin{align}\label{Omega-den-morphism}
	f_{\alpha\,\beta}(x\prec_{\alpha,\,\beta}y)=f_{\alpha}(x)\prec'_{\alpha,\,\beta}f_{\beta}(y),\quad f_{\alpha\,\beta}(x\succ_{\alpha,\,\beta}y)=f_{\alpha}(x)\succ'_{\alpha,\,\beta}f_{\beta}(y),
\end{align}
for all $ x,y\in A,\,\alpha,\,\beta\in \Omega. $

\end{defn}
 Now, we generalize the above definitions of the $\Omega$-dendriform algebra to BiHom-verision.

\begin{defn}\label{BiHom-den-fam}
	A $\mathbf{BiHom}$-$\mathbf{\Omega}$-$\mathbf{dendriform \, algebra}$ $(A, \prec_{\alpha,\,\beta} , \succ_{\alpha,\,\beta} , p_{\alpha} , q_{\alpha} )_{\alpha,\,\beta\in \Omega}$ is a vector space $A$ equipped with two families of bilinear maps $( \prec_{\alpha,\,\beta} , \succ_{\alpha,\,\beta} :A\otimes A\rightarrow A )_{\alpha,\,\beta\in \Omega}$ and two commuting families of linear maps $p_{\alpha} , q_{\alpha} :A\rightarrow A$ such that
	\begin{align}
		&p_{\alpha\,\beta}(x\prec_{\alpha,\,\beta} y)=p_{\alpha} (x)\prec_{\alpha,\,\beta} p_{\beta} (y), ~
		p_{\alpha\,\beta}(x\succ_{\alpha,\,\beta} y)=p_{\alpha} (x)\succ_{\alpha,\,\beta} p_{\beta}(y), \label{BiHomdend2} \\
		&q_{\alpha\,\beta}(x\prec_{\alpha,\,\beta} y)=q_{\alpha} (x)\prec_{\alpha,\,\beta} q_{\beta}(y), ~
		q_{\alpha\,\beta}(x\succ_{\alpha,\,\beta} y)=q_{\alpha} (x)\succ_{\alpha,\,\beta} q_{\beta}(y), \label{BiHomdend3} \\
		&(x\prec_{\alpha,\,\beta} y)\prec_{\alpha\,\beta,\,\gamma} q_{\gamma}(z)=p_{\alpha} (x)\prec_{\alpha,\,\beta\,\gamma} (y\prec_{\beta,\,\gamma} z+y\succ_{\beta,\,\gamma} z),  \label{BiHomdend4} \\
		&(x\succ_{\alpha,\,\beta} y)\prec_{\alpha\,\beta,\,\gamma} q_{\gamma}(z)=p_{\alpha} (x)\succ_{\alpha,\,\beta\,\gamma} (y\prec_{\beta,\,\gamma} z), \label{BiHomdend5} \\
		&p_{\alpha}(x)\succ_{\alpha,\,\beta\,\gamma} (y\succ_{\beta,\,\gamma} z)=(x\prec_{\alpha,\,\beta} y+x\succ _{\alpha,\,\beta}y)\succ_{\alpha\,\beta,\,\gamma} q_{\gamma}(z), \label{BiHomdend6}
	\end{align}
 for all $x,\, y,\, z\in A,\,\alpha,\,\beta \in \Omega.$ The maps $p_{\alpha}$ and $q_{\alpha} $ (in this order) are called the structure maps
	of $A$.
\end{defn}
\begin{defn}
	Let $ (A,\prec_{\alpha,\,\beta},\succ_{\alpha,\,\beta},p_{\alpha},q_{\alpha})_{\alpha,\,\beta\in \Omega} $ and $ (A',\prec'_{\alpha,\,\beta},\succ'_{\alpha,\,\beta},p'_{\alpha},q'_{\alpha})_{\alpha,\,\beta\in \Omega} $ be two BiHom-$\Omega$-dendriform algebras. A family of linear maps $ (f_{\alpha})_{\alpha\in \Omega}:A\rightarrow A' $ is called a $\mathbf{BiHom}$-$\mathbf{\Omega}$-$\mathbf{dendriform \, algebra \, morphism}$ if $ (f_{\alpha})_{\alpha\in \Omega} $ is an $\Omega$-dendriform algebra morphism and
	\[f_{\alpha}\circ p_{\alpha}=p'_{\alpha}\circ f_{\alpha},\quad f_{\alpha}\circ q_{\alpha}=q'_{\alpha}\circ f_{\alpha},\quad \text{ for all } \alpha\in \Omega.\]
\end{defn}

Inspired by~\cite[Proposition 2.2]{BiHom-dendri}, here we characterize the Yau twisting procedure for BiHom-$\Omega$-dendriform algebras as follows.

\begin{prop}\label{dendri-Yautwist}
	Let $ (A,\prec_{\alpha,\,\beta},\succ_{\alpha,\,\beta})_{\alpha,\,\beta\in \Omega} $ be an $\Omega$-dendriform algebra. If $ p_{\alpha},q_{\alpha}: A \rightarrow A $ are two commuting $\Omega$-dendriform algebra morphisms. Define the operations by
	\[x\prec_{\alpha,\,\beta}'y:=p_{\alpha}(x)\prec_{\alpha,\,\beta}q_{\beta}(y),\quad x\succ_{\alpha,\,\beta}'y:=p_{\alpha}(x)\succ_{\alpha,\,\beta}q_{\beta}(y),\]
	for all $ x,y\in A,\,\alpha,\,\beta\in \Omega, $ then $ (A,\prec_{\alpha,\,\beta}',\succ_{\alpha,\,\beta}',p_{\alpha},q_{\alpha})_{\alpha,\,\beta\in \Omega} $ is a BiHom-$\Omega$-dendriform algebra, called the Yau twist of $ (A,\prec_{\alpha,\,\beta},\succ_{\alpha,\,\beta})_{\alpha,\,\beta\in \Omega} $.
\end{prop}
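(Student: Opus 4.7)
The plan is to mirror the argument of Proposition~\ref{asso-Yautwist}: rewrite both sides of each defining identity of a BiHom-$\Omega$-dendriform algebra in terms of the untwisted operations $\prec_{\alpha,\beta}, \succ_{\alpha,\beta}$, and then invoke the corresponding $\Omega$-dendriform identity.

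First I would dispatch the multiplicativity equations \eqref{BiHomdend2}--\eqref{BiHomdend3}. For the $\prec'$ case one has
$p_{\alpha\,\beta}(x\prec'_{\alpha,\beta}y)=p_{\alpha\,\beta}\bigl(p_{\alpha}(x)\prec_{\alpha,\beta}q_{\beta}(y)\bigr)$,
which by \eqref{Omega-den-morphism} equals $p_{\alpha}^{2}(x)\prec_{\alpha,\beta}p_{\beta}q_{\beta}(y)$; then $p_{\beta}\circ q_{\beta}=q_{\beta}\circ p_{\beta}$ rewrites the right factor as $q_{\beta}p_{\beta}(y)$, giving $p_{\alpha}(x)\prec'_{\alpha,\beta}p_{\beta}(y)$. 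The three companion identities (with $q$ in place of $p$, and with $\succ$ in place of $\prec$) follow by the identical manipulation.

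Next I would check the three main axioms \eqref{BiHomdend4}--\eqref{BiHomdend6}. For each, the strategy is: unfold the primed operations by their definitions; pull the outer $p_{\alpha\,\beta}$ or $q_{\beta\,\gamma}$ through the untwisted operation by \eqref{Omega-den-morphism}; use the commutation of $p$ and $q$ to align the resulting doubly-iterated maps on the two sides; and apply \eqref{Ome-den-1}, \eqref{Ome-den-2}, \eqref{Ome-den-3}, respectively. For instance, in \eqref{BiHomdend4} the LHS reduces to $\bigl(p_{\alpha}^{2}(x)\prec_{\alpha,\beta}q_{\beta}p_{\beta}(y)\bigr)\prec_{\alpha\,\beta,\gamma}q_{\gamma}^{2}(z)$, which by \eqref{Ome-den-1} equals
$p_{\alpha}^{2}(x)\prec_{\alpha,\beta\,\gamma}\bigl(q_{\beta}p_{\beta}(y)\prec_{\beta,\gamma}q_{\gamma}^{2}(z)+q_{\beta}p_{\beta}(y)\succ_{\beta,\gamma}q_{\gamma}^{2}(z)\bigr)$;
the RHS, after distributing $q_{\beta\,\gamma}$ across the sum sitting inside the right slot (again by \eqref{Omega-den-morphism}) and collecting the $p$'s and $q$'s, produces the same expression. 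The verifications of \eqref{BiHomdend5} and \eqref{BiHomdend6} are structurally identical, using \eqref{Ome-den-2} and \eqref{Ome-den-3}.

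The only delicate point I anticipate is bookkeeping on the mixed identities \eqref{BiHomdend4} and \eqref{BiHomdend6}, where a sum $y\prec'_{\beta,\gamma}z+y\succ'_{\beta,\gamma}z$ appears in one slot and the morphism property \eqref{Omega-den-morphism} must be invoked for both $\prec$ and $\succ$ in parallel. No genuinely new phenomenon appears beyond what has already been handled in the BiHom-$\Omega$-associative Yau twist of Proposition~\ref{asso-Yautwist}; consequently the proof is routine but tedious, and consists essentially of three calculations each of the same template as the one displayed in that earlier proof.
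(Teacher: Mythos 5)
Your proposal matches the paper's proof: the paper likewise verifies \eqref{BiHomdend2}--\eqref{BiHomdend3} via \eqref{Omega-den-morphism} and the commutation $p_{\beta}\circ q_{\beta}=q_{\beta}\circ p_{\beta}$, then establishes \eqref{BiHomdend4} by exactly the reduction you describe (unfold, pull $p_{\alpha\,\beta}$ and $q_{\beta\,\gamma}$ through by the morphism property, commute, apply \eqref{Ome-den-1}), leaving \eqref{BiHomdend5}--\eqref{BiHomdend6} as analogous computations with \eqref{Ome-den-2}--\eqref{Ome-den-3}. No substantive difference.
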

\begin{proof}
	For $ x,y,z\in A,\,\alpha,\,\beta,\,\gamma\in \Omega, $ we need to prove the structure maps satisfying Eqs.~(\ref{BiHomdend2})-(\ref{BiHomdend3}). Here we prove
	\begin{align*}
		p_{\alpha\,\beta}(x\prec_{\alpha,\,\beta}'y)=&p_{\alpha\,\beta}(p_{\alpha}(x)\prec_{\alpha,\,\beta}q_{\beta}(y))\\
		=&p_{\alpha}^{2}(x)\prec_{\alpha,\,\beta}p_{\beta}q_{\beta}(y)\hspace{1cm}\text{(by $p_{\alpha\,\beta}$ satisfying Eq.~(\ref{Omega-den-morphism})})\\
		=&p_{\alpha}p_{\alpha}(x)\prec_{\alpha,\,\beta}q_{\beta}p_{\beta}(y)\hspace{1cm}\text{(by $p_{\beta}\circ q_{\beta}=q_{\beta}\circ p_{\beta}$})\\
		=&p_{\alpha}(x)\prec_{\alpha,\,\beta}'p_{\beta}(y).
	\end{align*}
Other relations in Eqs.~(\ref{BiHomdend2})-(\ref{BiHomdend3}) are similar to prove.
 Next, we only need to prove Eq.~(\ref{BiHomdend4}) and Eqs.~(\ref{BiHomdend5})-(\ref{BiHomdend6}) are similar to prove by using Eqs.~\eqref{Ome-den-2}-\eqref{Ome-den-3}.
	\begin{align*}
	(x\prec_{\alpha,\,\beta}'y)\prec_{\alpha\,\beta,\,\gamma}'q_{\gamma}(z)=&p_{\alpha\,\beta}(x\prec_{\alpha,\,\beta}'y)\prec_{\alpha\,\beta,\,\gamma}q_{\gamma}^{2}(z)\\
	=&p_{\alpha\,\beta}(p_{\alpha}(x)\prec_{\alpha,\,\beta}q_{\beta}(y))\prec_{\alpha\,\beta,\,\gamma}q_{\gamma}^{2}(z)\\
	=&(p_{\alpha}^{2}(x)\prec_{\alpha,\,\beta}p_{\beta}q_{\beta}(y))\prec_{\alpha\,\beta,\,\gamma}q_{\gamma}^{2}(z)\hspace{1cm}\text{(by $p_{\alpha\,\beta}$ satisfying Eq.~(\ref{Omega-den-morphism})})\\
	=&p_{\alpha}^{2}(x)\prec_{\alpha,\,\beta\,\gamma}(p_{\beta}q_{\beta}(y)\prec_{\beta,\,\gamma}q_{\gamma}^{2}(z)+p_{\beta}q_{\beta}(y)\succ_{\beta,\,\gamma}q_{\gamma}^{2}(z))\hspace{0.5cm}\text{(by Eq.~(\ref{Ome-den-1})})\\
	=&p_{\alpha}^{2}(x)\prec_{\alpha,\,\beta\,\gamma}(q_{\beta}p_{\beta}(y)\prec_{\beta,\,\gamma}q_{\gamma}^{2}(z)+q_{\beta}p_{\beta}(y)\succ_{\beta,\,\gamma}q_{\gamma}^{2}(z))\\
	&\hspace{1cm}\text{(by $p_{\beta}\circ q_{\beta}=q_{\beta}\circ p_{\beta}$})\\
	=&p_{\alpha}^{2}(x)\prec_{\alpha,\,\beta\,\gamma}q_{\beta\,\gamma}(p_{\beta}(y)\prec_{\beta,\,\gamma}q_{\gamma}(z)+p_{\beta}(y)\succ_{\beta,\,\gamma}q_{\gamma}(z))\\
	&\hspace{1cm}\text{(by $q_{\beta\,\gamma}$ satisfying Eq.~(\ref{Omega-den-morphism})})\\
	=&p_{\alpha}^{2}(x)\prec_{\alpha,\,\beta\,\gamma}q_{\beta\,\gamma}(y\prec_{\beta,\,\gamma}'z+y\succ_{\beta,\,\gamma}'z)\\
	=&p_{\alpha}(x)\prec_{\alpha,\,\beta\,\gamma}'(y\prec_{\beta,\,\gamma}'z+y\succ_{\beta,\,\gamma}'z).
	\end{align*}

This completes the proof.
\end{proof}


The following result is a more general of the Yau twisting procedure for BiHom-$\Omega$-dendriform algebras and the proof is similar to Proposition~\ref{dendri-Yautwist}.

\begin{prop}\label{bihomden-bihomden}
	If $ (A,\prec_{\alpha,\,\beta},\succ_{\alpha,\,\beta},p_{\alpha},q_{\alpha})_{\alpha,\,\beta\in \Omega} $ is a BiHom-$\Omega$-dendriform algebra with $ p'_{\alpha},q'_{\alpha}:A\rightarrow A $ are two BiHom-$\Omega$-dendriform algebra morphisms and any two families of the maps $p_{\alpha},q_{\alpha},p'_{\alpha},q'_{\alpha}$ commute with each other. Define the multiplications on $A$ by
	\[x\prec'_{\alpha,\,\beta}y:=p'_{\alpha}(x)\prec_{\alpha,\,\beta}q'_{\beta}(y),\;\;\;x\succ'_{\alpha,\,\beta}y:=p'_{\alpha}(x)\succ_{\alpha,\,\beta}q'_{\beta}(y),\]
	for all $x,y\in A,\,\alpha,\,\beta\in\Omega,$ then $ (A,\prec'_{\alpha,\,\beta},\succ'_{\alpha,\,\beta},p_{\alpha}\circ p'_{\alpha},q_{\alpha}\circ q'_{\alpha})_{\alpha,\,\beta\in \Omega} $ is a BiHom-$\Omega$-dendriform algebra.
\end{prop}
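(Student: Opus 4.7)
The plan is to mirror the argument given for Proposition~\ref{dendri-Yautwist}, taking care to track the extra layer of structure maps. Write $P_{\alpha}:=p_{\alpha}\circ p'_{\alpha}$ and $Q_{\alpha}:=q_{\alpha}\circ q'_{\alpha}$. First I would check that $P_{\alpha}$ and $Q_{\alpha}$ commute; this is immediate from the hypothesis that any two of $p_{\alpha},q_{\alpha},p'_{\alpha},q'_{\alpha}$ commute.

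Next I would verify the multiplicativity equations \eqref{BiHomdend2}--\eqref{BiHomdend3} for $P_{\alpha\beta}$ and $Q_{\alpha\beta}$ with respect to $\prec'_{\alpha,\beta}$ and $\succ'_{\alpha,\beta}$. For instance, to handle $P_{\alpha\beta}(x\prec'_{\alpha,\beta}y)$ I unfold the definition of $\prec'$, then apply the multiplicativity of $p_{\alpha\beta}$ for $\prec_{\alpha,\beta}$ from \eqref{BiHomdend2} and the fact that $p'_{\alpha\beta}$ is a BiHom-$\Omega$-dendriform algebra morphism (hence also satisfies multiplicativity for $\prec_{\alpha,\beta}$), and finally rearrange compositions using the pairwise commutativity of $p_{\alpha},q_{\alpha},p'_{\alpha},q'_{\alpha}$ to reassemble $P_{\alpha}(x)\prec'_{\alpha,\beta}P_{\beta}(y)$. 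The same template, with $q$'s in place of $p$'s or with $\succ$ in place of $\prec$, handles the three remaining multiplicativity identities.

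The main technical step is to establish the three defining axioms \eqref{BiHomdend4}--\eqref{BiHomdend6} for $\prec'$ and $\succ'$ with structure maps $P_{\alpha}$, $Q_{\alpha}$. I would write out the case \eqref{BiHomdend4} in detail and leave the other two as analogous. Starting from $(x\prec'_{\alpha,\beta}y)\prec'_{\alpha\beta,\gamma}Q_{\gamma}(z)$, I unfold each $\prec'$, apply the morphism property of $p'_{\alpha\beta}$ to move $p'$ past $\prec_{\alpha,\beta}$, and use commutativity of $p,q,p',q'$ to bring the expression into the shape $\bigl(p'_{\alpha}(x')\prec_{\alpha,\beta}q'_{\beta}(y')\bigr)\prec_{\alpha\beta,\gamma}q_{\gamma}(z')$ for appropriate $x',y',z'$ built from the original $x,y,z$ by iterates of $p'_\alpha,q'_\alpha$. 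Now I can invoke the original axiom \eqref{BiHomdend4} for the BiHom-$\Omega$-dendriform algebra $(A,\prec_{\alpha,\beta},\succ_{\alpha,\beta},p_{\alpha},q_{\alpha})$ to rewrite the left-hand side as $p_{\alpha}(\cdot)\prec_{\alpha,\beta\gamma}(\cdot\prec_{\beta,\gamma}\cdot + \cdot\succ_{\beta,\gamma}\cdot)$. Finally, a second round of commutativity and multiplicativity of $p'_{\beta\gamma}$ and $q'$'s lets me repackage the result as $P_{\alpha}(x)\prec'_{\alpha,\beta\gamma}(y\prec'_{\beta,\gamma}z + y\succ'_{\beta,\gamma}z)$, which is the desired right-hand side.

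The conceptual content is identical to that of Proposition~\ref{dendri-Yautwist}: the Yau-twist strategy succeeds precisely because the twisting maps $p'_{\alpha},q'_{\alpha}$ are \emph{morphisms} (so they pass through the products) and because all the structure maps mutually commute (so one can freely reorder compositions). The main obstacle will be purely notational book-keeping, namely tracking which copies of $p_{\alpha},q_{\alpha},p'_{\alpha},q'_{\alpha}$ sit on which argument after each rewrite; no new idea is required beyond those already used in Proposition~\ref{dendri-Yautwist}, which is why the statement notes the proof is similar.
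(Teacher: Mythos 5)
Your proposal is correct and follows exactly the route the paper intends: the paper omits the proof of Proposition~\ref{bihomden-bihomden}, stating only that it is similar to Proposition~\ref{dendri-Yautwist}, and your plan (unfold $\prec'$, $\succ'$, push $p'_{\alpha\beta}$ and $q'_{\beta\gamma}$ through the products via the morphism property, reorder with pairwise commutativity, invoke the original axioms \eqref{BiHomdend2}--\eqref{BiHomdend6}, and repackage) is precisely the computation carried out for the analogous associative-algebra statement earlier in Section~\ref{sec2}. The key identities you would need, e.g. $((p'_{\alpha})^{2}(x)\prec_{\alpha,\,\beta}p'_{\beta}q'_{\beta}(y))\prec_{\alpha\,\beta,\,\gamma}q_{\gamma}(q'_{\gamma})^{2}(z)=p_{\alpha}(p'_{\alpha})^{2}(x)\prec_{\alpha,\,\beta\,\gamma}(\cdots)$, do check out, so no gap remains.
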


Next, we will introduce the relationship between BiHom-$\Omega$-associative algebras and BiHom-$\Omega$-dendriform algebras.

\begin{prop}\label{denf-to-Omegaasso}
		Let $ (A, \prec_{\alpha,\,\beta}, \succ_{\alpha,\,\beta}, p_{\alpha}, q_{\alpha})_{\alpha,\,\beta\in \Omega} $ be a BiHom-$\Omega$-dendriform algebra. If we define the multiplication by
		\[x \bullet_{\alpha,\,\beta} y := x \prec_{\alpha,\,\beta}y +x\succ_{\alpha,\,\beta}y, \]
 for all $x, y \in A,\,\alpha,\,\beta \in \Omega,$ then $ (A, \bullet_{\alpha,\,\beta}, p_{\alpha},q_{\alpha})_{\alpha,\,\beta\in\Omega} $ is a BiHom-$\Omega$-associative algebra.

\end{prop}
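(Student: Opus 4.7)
The plan is to verify the two defining conditions of a BiHom-$\Omega$-associative algebra — multiplicativity (Eq.~\eqref{multiplicativity}) and BiHom-$\Omega$-associativity (Eq.~\eqref{BiH-Omega-assoity}) — directly from the six defining axioms of the BiHom-$\Omega$-dendriform algebra together with the definition $x\bullet_{\alpha,\,\beta}y:=x\prec_{\alpha,\,\beta}y+x\succ_{\alpha,\,\beta}y$.

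First, I would dispose of multiplicativity. Since $p_{\alpha\,\beta}$ is linear, we have
\[
p_{\alpha\,\beta}(x\bullet_{\alpha,\,\beta}y)=p_{\alpha\,\beta}(x\prec_{\alpha,\,\beta}y)+p_{\alpha\,\beta}(x\succ_{\alpha,\,\beta}y),
\]
and applying Eq.~\eqref{BiHomdend2} to each summand gives $p_{\alpha}(x)\prec_{\alpha,\,\beta}p_{\beta}(y)+p_{\alpha}(x)\succ_{\alpha,\,\beta}p_{\beta}(y)=p_{\alpha}(x)\bullet_{\alpha,\,\beta}p_{\beta}(y)$. The identity for $q_{\alpha\,\beta}$ follows identically from Eq.~\eqref{BiHomdend3}. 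This step is routine.

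The main content is the BiHom-$\Omega$-associativity. The strategy is to expand both sides of Eq.~\eqref{BiH-Omega-assoity} using the definition of $\bullet$, producing four bi-operation terms on each side, and then match them using Eqs.~\eqref{BiHomdend4}, \eqref{BiHomdend5}, \eqref{BiHomdend6}. Concretely, the right-hand side becomes
\[
(x\prec_{\alpha,\,\beta}y)\prec_{\alpha\,\beta,\,\gamma}q_{\gamma}(z)+(x\succ_{\alpha,\,\beta}y)\prec_{\alpha\,\beta,\,\gamma}q_{\gamma}(z)+(x\prec_{\alpha,\,\beta}y+x\succ_{\alpha,\,\beta}y)\succ_{\alpha\,\beta,\,\gamma}q_{\gamma}(z),
\]
which by Eqs.~\eqref{BiHomdend4}--\eqref{BiHomdend6} rewrites as
\[
p_{\alpha}(x)\prec_{\alpha,\,\beta\,\gamma}(y\prec_{\beta,\,\gamma}z+y\succ_{\beta,\,\gamma}z)+p_{\alpha}(x)\succ_{\alpha,\,\beta\,\gamma}(y\prec_{\beta,\,\gamma}z)+p_{\alpha}(x)\succ_{\alpha,\,\beta\,\gamma}(y\succ_{\beta,\,\gamma}z).
\]
Regrouping the four resulting terms as a $\prec$-part and a $\succ$-part acting on $y\prec_{\beta,\,\gamma}z+y\succ_{\beta,\,\gamma}z=y\bullet_{\beta,\,\gamma}z$ recovers exactly the left-hand side $p_{\alpha}(x)\bullet_{\alpha,\,\beta\,\gamma}(y\bullet_{\beta,\,\gamma}z)$.

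I do not anticipate a genuine obstacle here: the three dendriform axioms are designed precisely so that their sum yields the associativity of $\bullet$, and the BiHom twisting by $p_{\alpha}$ and $q_{\gamma}$ appears symmetrically on both sides. The only thing to be careful with is bookkeeping — making sure each of the four cross-terms $(x\prec y)\prec z$, $(x\succ y)\prec z$, $(x\prec y)\succ z$, $(x\succ y)\succ z$ (with the appropriate structure maps applied) is accounted for exactly once when reassembling $\bullet$ on each side.
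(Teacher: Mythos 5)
Your proposal is correct and follows essentially the same route as the paper: multiplicativity from linearity together with Eqs.~\eqref{BiHomdend2}--\eqref{BiHomdend3}, and BiHom-$\Omega$-associativity by expanding both sides of Eq.~\eqref{BiH-Omega-assoity} into the four cross-terms and matching them via Eqs.~\eqref{BiHomdend4}--\eqref{BiHomdend6}. The only cosmetic difference is that the paper starts from the left-hand side $p_{\alpha}(x)\bullet_{\alpha,\,\beta\gamma}(y\bullet_{\beta,\,\gamma}z)$ and rewrites toward $(x\bullet_{\alpha,\,\beta}y)\bullet_{\alpha\beta,\,\gamma}q_{\gamma}(z)$, whereas you go in the opposite direction.
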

\begin{proof}
For any $ x,\,y,\,z\in A,\,\alpha,\,\beta,\,\gamma\in\Omega,$ owing to the commutativity, we get $ p_{\alpha}\circ q_{\alpha}=q_{\alpha}\circ p_{\alpha} $ easily.
First, we prove the multiplicativity in Definition \ref{BiHom-Omega-asso}, we have
 \begin{align*}
 	p_{\alpha\,\beta}(x\bullet_{\alpha,\,\beta}y)=&p_{\alpha\,\beta}(x\prec_{\alpha,\,\beta}y+x\succ_{\alpha,\,\beta}y)\\
 	=&p_{\alpha\,\beta}(x\prec_{\alpha,\,\beta}y)+p_{\alpha\,\beta}(x\succ_{\alpha,\,\beta}y)\hspace{1cm} \text{(by $(p_{\alpha})_{\alpha\in \Omega}$ being a family of linear maps)}\\
 	=&p_{\alpha}(x)\prec_{\alpha,\,\beta}p_{\beta}(y)+p_{\alpha}(x)\succ_{\alpha,\,\beta}p_{\beta}(y)\hspace{1cm} \text{ (by Eq.~(\ref{BiHomdend2})})\\
 	=&p_{\alpha}(x)\bullet_{\alpha,\,\beta}p_{\beta}(y).
 \end{align*}
Similarly, we have
$q_{\alpha\,\beta}(x\bullet_{\alpha,\,\beta}y)=q_{\alpha}(x)\bullet_{\alpha,\,\beta}q_{\beta}(y).$
Next, we prove the BiHom-$\Omega$-associativity in Definition \ref{BiHom-Omega-asso}, we have
\begin{align*}
	p_{\alpha}(x)\bullet_{\alpha,\,\beta\gamma}(y\bullet_{\beta,\,\gamma}z)=&p_{\alpha}(x)\bullet_{\alpha,\,\beta\gamma}(y\prec_{\beta,\,\gamma}z+y\succ_{\beta,\,\gamma}z)\\
	=&p_{\alpha}(x)\prec_{\alpha,\,\beta\,\gamma}(y\prec_{\beta,\,\gamma}z+y\succ_{\beta,\,\gamma}z)+p_{\alpha}(x)\succ_{\alpha,\,\beta\,\gamma}(y\prec_{\beta,\,\gamma}z+y\succ_{\beta,\,\gamma}z)\\
	=&(x\prec_{\alpha,\,\beta}y)\prec_{\alpha\,\beta,\,\gamma}q_{\gamma}(z)+(x\succ_{\alpha,\,\beta}y)\prec_{\alpha\,\beta,\,\gamma}q_{\gamma}(z)\\
	&+(x\prec_{\alpha,\,\beta}y+x\succ_{\alpha,\,\beta}y)\succ_{\alpha\,\beta,\,\gamma}q_{\gamma}(z)\hspace{1cm} \text{(by Eqs.~(\ref{BiHomdend4})-(\ref{BiHomdend6}))}\\
	=&(x\prec_{\alpha,\,\beta}y+x\succ_{\alpha,\,\beta}y)\prec_{\alpha\,\beta,\,\gamma}q_{\gamma}(z)+(x\prec_{\alpha,\,\beta}y+x\succ_{\alpha,\,\beta}y)\succ_{\alpha\,\beta,\,\gamma}q_{\gamma}(z)\\
	=&(x\prec_{\alpha,\,\beta}y+x\succ_{\alpha,\,\beta}y)\bullet_{\alpha\beta,\,\gamma}q_{\gamma}(z)\\
	=&(x\bullet_{\alpha,\,\beta}y)\bullet_{\alpha\beta,\,\gamma}q_{\gamma}(z).
\end{align*}
This completes the proof.
\end{proof}

 It's well known that Rota-Baxter algebras induce dendriform family algebras, now we generalize this property to BiHom-$\Omega$ version.

\begin{prop}\label{asso-den}
	Let $ (A,\bullet_{\alpha,\,\beta},p_{\alpha},q_{\alpha})_{\alpha,\,\beta\in \Omega} $ be a BiHom-$\Omega$-associative algebra.
	\begin{enumerate}
		\item\label{asso-RB-den-1} If $ (R_{\alpha})_{\alpha\in \Omega} $ is a Rota-Baxter family of weight 0 on $A$ and
		\[p_{\alpha}\circ R_{\alpha}=R_{\alpha}\circ p_{\alpha},\quad q_{\alpha}\circ R_{\alpha}=R_{\alpha}\circ q_{\alpha}.\] Define the operations by
		\[x\prec_{\alpha,\,\beta}y:=x\bullet_{\alpha,\,\beta} R_{\beta}(y),\quad x\succ_{\alpha,\,\beta}y:=R_{\alpha}(x)\bullet_{\alpha,\,\beta}y,\]
		for all $ x,y\in A,\,\alpha,\,\beta\in \Omega, $ then $ (A,\prec_{\alpha,\,\beta},\succ_{\alpha,\,\beta},p_{\alpha},q_{\alpha})_{\alpha,\,\beta\in \Omega} $ is a BiHom-$\Omega$-dendriform algebra.
		\item\label{asso-RB-den-2} If $ (R_{\alpha})_{\alpha\in \Omega} $ is a Rota-Baxter family of weight $\lambda$ on $A$ and
		\[p_{\alpha}\circ R_{\alpha}=R_{\alpha}\circ p_{\alpha},\quad q_{\alpha}\circ R_{\alpha}=R_{\alpha}\circ q_{\alpha}.\] Define the operations by
		\[x\prec'_{\alpha,\,\beta}y:=x\bullet_{\alpha,\,\beta} R_{\beta}(y)+\lambda x\bullet_{\alpha,\,\beta}y,\quad x\succ'_{\alpha,\,\beta}y:=R_{\alpha}(x)\bullet_{\alpha,\,\beta}y,\]
		for all $ x,y\in A,\,\alpha,\,\beta\in \Omega, $ then $ (A,\prec'_{\alpha,\,\beta},\succ'_{\alpha,\,\beta},p_{\alpha},q_{\alpha})_{\alpha,\,\beta\in \Omega} $ is a BiHom-$\Omega$-dendriform algebra.
	\end{enumerate}
\end{prop}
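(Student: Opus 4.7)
The plan is to check the five axioms of Definition~\ref{BiHom-den-fam} directly for each of the two proposed dendriform structures, using the BiHom-$\Omega$-associativity of $\bullet_{\alpha,\,\beta}$, the multiplicativity of $p_\alpha, q_\alpha$ with respect to $\bullet_{\alpha,\,\beta}$, the hypothesis that $R_\alpha$ commutes with $p_\alpha$ and $q_\alpha$, and the Rota-Baxter family identity~\eqref{RB-Eq}. The commuting conditions $p_\alpha \circ q_\alpha = q_\alpha \circ p_\alpha$ and the commutation of $R_\alpha$ with the structure maps will be used silently to freely shift $R$'s past $p$'s and $q$'s.

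The multiplicativity axioms~\eqref{BiHomdend2}-\eqref{BiHomdend3} are the easy step. For part~\eqref{asso-RB-den-1}, writing $p_{\alpha\,\beta}(x \prec_{\alpha,\,\beta} y) = p_{\alpha\,\beta}(x \bullet_{\alpha,\,\beta} R_\beta(y))$, applying the multiplicativity of $p_{\alpha\,\beta}$ with respect to $\bullet_{\alpha,\,\beta}$ in Eq.~\eqref{multiplicativity}, then pushing $R_\beta$ past $p_\beta$, yields $p_\alpha(x) \prec_{\alpha,\,\beta} p_\beta(y)$; the remaining three identities for $q$ and $\succ$ are identical in form. For part~\eqref{asso-RB-den-2}, the added $\lambda\, x \bullet_{\alpha,\,\beta} y$ summand in $\prec'_{\alpha,\,\beta}$ is dispatched by the same multiplicativity.

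The core of the argument is the three dendriform axioms~\eqref{BiHomdend4}-\eqref{BiHomdend6}. I will illustrate~\eqref{BiHomdend4} for part~\eqref{asso-RB-den-1}; the others are completely parallel. Starting from $(x \prec_{\alpha,\,\beta} y) \prec_{\alpha\,\beta,\,\gamma} q_\gamma(z) = (x \bullet_{\alpha,\,\beta} R_\beta(y)) \bullet_{\alpha\,\beta,\,\gamma} R_\gamma q_\gamma(z)$, I commute $R_\gamma$ with $q_\gamma$ and apply BiHom-$\Omega$-associativity~\eqref{BiH-Omega-assoity} to rewrite it as $p_\alpha(x) \bullet_{\alpha,\,\beta\,\gamma}(R_\beta(y) \bullet_{\beta,\,\gamma} R_\gamma(z))$. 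The Rota-Baxter relation~\eqref{RB-Eq} at weight $0$ replaces $R_\beta(y) \bullet_{\beta,\,\gamma} R_\gamma(z)$ by $R_{\beta\,\gamma}(y \bullet_{\beta,\,\gamma} R_\gamma(z) + R_\beta(y) \bullet_{\beta,\,\gamma} z)$, which is precisely $R_{\beta\,\gamma}(y \prec_{\beta,\,\gamma} z + y \succ_{\beta,\,\gamma} z)$, giving $p_\alpha(x) \prec_{\alpha,\,\beta\,\gamma}(y \prec_{\beta,\,\gamma} z + y \succ_{\beta,\,\gamma} z)$. The same recipe (associativity, then Rota-Baxter) handles~\eqref{BiHomdend5} and~\eqref{BiHomdend6}, the cancellations in the latter mirroring the standard Rota-Baxter $\to$ dendriform calculation.

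Part~\eqref{asso-RB-den-2} is the main bookkeeping obstacle, since every instance of $\prec'_{\alpha,\,\beta}$ expands into two terms and the weight-$\lambda$ Rota-Baxter identity contributes an extra $\lambda\,\mu_{\alpha,\,\beta}(x,y)$. For~\eqref{BiHomdend4}, expanding $(x \prec'_{\alpha,\,\beta} y) \prec'_{\alpha\,\beta,\,\gamma} q_\gamma(z)$ produces four summands after the outer $\prec'$ is opened, each of which is reshaped by~\eqref{BiH-Omega-assoity}; on the right side $p_\alpha(x) \prec'_{\alpha,\,\beta\,\gamma}(y \prec'_{\beta,\,\gamma} z + y \succ'_{\beta,\,\gamma} z)$ produces five summands, and the surplus cross-terms $\lambda(R_\beta(y) \bullet_{\beta,\,\gamma} z + y \bullet_{\beta,\,\gamma} R_\gamma(z)) + \lambda^2(y\bullet_{\beta,\,\gamma} z)$ are precisely what~\eqref{RB-Eq} at weight $\lambda$ absorbs when comparing against $R_\beta(y) \bullet_{\beta,\,\gamma} R_\gamma(z)$. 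I will carry out the matching term by term, grouping summands by the pattern of $R$'s on the arguments; axiom~\eqref{BiHomdend6} is simpler because its right side $p_\alpha(x) \succ'_{\alpha,\,\beta\,\gamma}(y \succ'_{\beta,\,\gamma} z)$ involves no $\lambda$-correction (only $\succ'$ appears, which has none), so the $\lambda$-terms must cancel on the left alone — this is the one verification I expect will require the most care.
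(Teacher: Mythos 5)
Your proposal follows essentially the same route as the paper's proof: direct verification of the multiplicativity conditions and of Eqs.~(\ref{BiHomdend4})--(\ref{BiHomdend6}) by expanding the new operations, applying BiHom-$\Omega$-associativity (Eq.~(\ref{BiH-Omega-assoity})), commuting $R$ past the structure maps, and invoking the Rota--Baxter identity (Eq.~(\ref{RB-Eq})) to absorb the resulting sums. One small correction to your bookkeeping remark: in the weight-$\lambda$ case Eq.~(\ref{BiHomdend6}) is in fact the easiest of the three, since $x\prec'_{\alpha,\,\beta}y+x\succ'_{\alpha,\,\beta}y$ is literally the argument of $R_{\alpha\,\beta}$ in Eq.~(\ref{RB-Eq}), so the $\lambda$-term is absorbed in a single application of that identity rather than requiring a delicate cancellation.
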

\begin{proof}
	We just prove Item \ref{asso-RB-den-2}. Item \ref{asso-RB-den-1} can be proved in the same way. To verify the relations in Eqs.~(\ref{BiHomdend2})-(\ref{BiHomdend6}), for any $ x,y,z\in A,\,\alpha,\,\beta,\,\gamma\in \Omega,$ first of all, we have
	\begin{align*}
		p_{\alpha\,\beta}(x\prec'_{\alpha,\,\beta}y)=&p_{\alpha\,\beta}(x\bullet_{\alpha,\,\beta}R_{\beta}(y)+\lambda x\bullet_{\alpha,\,\beta}y)\\
		=&p_{\alpha}(x)\bullet_{\alpha,\,\beta}p_{\beta}R_{\beta}(y)+\lambda p_{\alpha}(x)\bullet_{\alpha,\,\beta}p_{\beta}(y)\hspace{1cm}\text{(by Eq.~(\ref{multiplicativity}))}\\
		=&p_{\alpha}(x)\bullet_{\alpha,\,\beta}R_{\beta}p_{\beta}(y)+\lambda p_{\alpha}(x)\bullet_{\alpha,\,\beta}p_{\beta}(y)\hspace{1cm}\text{(by $p_{\beta}\circ R_{\beta}=R_{\beta}\circ p_{\beta}$})\\
		=&p_{\alpha}(x)\prec'_{\alpha,\,\beta}p_{\beta}(y),
	\end{align*}
Other relations in Eqs.~(\ref{BiHomdend2})-(\ref{BiHomdend3}) can be similarly proved.
Next, for Eqs.~(\ref{BiHomdend4})-(\ref{BiHomdend6}), we compute
\begin{align*}
	(x\prec'_{\alpha,\,\beta}y)\prec'_{\alpha\,\beta,\,\gamma}q_{\gamma}(z)=&(x\bullet_{\alpha,\,\beta}R_{\beta}(y)+\lambda x\bullet_{\alpha,\,\beta}y)\prec'_{\alpha\,\beta,\,\gamma}q_{\gamma}(z)\\
	=&(x\bullet_{\alpha,\,\beta}R_{\beta}(y)+\lambda x\bullet_{\alpha,\,\beta}y)\bullet_{\alpha\,\beta,\,\gamma}R_{\gamma}q_{\gamma}(z)+\lambda (x\bullet_{\alpha,\,\beta}R_{\beta}(y)\\
	&+\lambda x\bullet_{\alpha,\,\beta}y)\bullet_{\alpha\,\beta,\,\gamma}q_{\gamma}(z)\\
	=&(x\bullet_{\alpha,\,\beta}R_{\beta}(y))\bullet_{\alpha\,\beta,\,\gamma}q_{\gamma}R_{\gamma}(z)+\lambda (x\bullet_{\alpha,\,\beta}y)\bullet_{\alpha\,\beta,\,\gamma}q_{\gamma}R_{\gamma}(z)\\
	&+\lambda (x\bullet_{\alpha,\,\beta}R_{\beta}(y))\bullet_{\alpha\,\beta,\,\gamma}q_{\gamma}(z)+\lambda^{2}(x\bullet_{\alpha,\,\beta}y)\bullet_{\alpha\,\beta,\,\gamma}q_{\gamma}(z)\\
	&\hspace{1cm} \text{(by $R_{\gamma}\circ q_{\gamma}= q_{\gamma}\circ R_{\gamma}$)}\\
	=&p_{\alpha}(x)\bullet_{\alpha,\,\beta\,\gamma}(R_{\beta}(y)\bullet_{\beta,\,\gamma}R_{\gamma}(z))+\lambda p_{\alpha}(x)\bullet_{\alpha,\,\beta\,\gamma}(y\bullet_{\beta,\,\gamma}R_{\gamma}(z))\\
	&+\lambda p_{\alpha}(x)\bullet_{\alpha,\,\beta\,\gamma}(R_{\beta}(y)\bullet_{\beta,\,\gamma}z)+\lambda^{2}p_{\alpha}(x)\bullet_{\alpha,\,\beta\,\gamma}(y\bullet_{\beta,\,\gamma}z)\hspace{1cm}\text{(by Eq.~(\ref{BiH-Omega-assoity}))}\\
	=&p_{\alpha}(x)\bullet_{\alpha,\,\beta\,\gamma}R_{\beta\,\gamma}(R_{\beta}(y)\bullet_{\beta,\,\gamma}z+y\bullet_{\beta,\,\gamma}R_{\gamma}(z)+\lambda y\bullet_{\beta,\,\gamma}z)\\
	&+\lambda p_{\alpha}(x)\bullet_{\alpha,\,\beta\,\gamma}(R_{\beta}(y)\bullet_{\beta,\,\gamma}z+y\bullet_{\beta,\,\gamma}R_{\gamma}(z)+\lambda y\bullet_{\beta,\,\gamma}z)\hspace{1cm}\text{(by Eq.~(\ref{RB-Eq}))}\\
	=&p_{\alpha}(x)\prec'_{\alpha,\,\beta\,\gamma}(R_{\beta}(y)\bullet_{\beta,\,\gamma}z+y\bullet_{\beta,\,\gamma}R_{\gamma}(z)+\lambda y\bullet_{\beta,\,\gamma}z)\\
	=&p_{\alpha}(x)\prec'_{\alpha,\,\beta\,\gamma}(y\succ'_{\beta,\,\gamma}z+y\prec'_{\beta,\,\gamma}z),
\end{align*}
\begin{align*}
	(x\succ'_{\alpha,\,\beta}y)\prec'_{\alpha\,\beta,\,\gamma}q_{\gamma}(z)=&(R_{\alpha}(x)\bullet_{\alpha,\,\beta}y)\bullet_{\alpha\,\beta,\,\gamma}R_{\gamma}q_{\gamma}(z)+\lambda (R_{\alpha}(x)\bullet_{\alpha,\,\beta}y)\bullet_{\alpha\,\beta,\,\gamma}q_{\gamma}(z)\\
	=&(R_{\alpha}(x)\bullet_{\alpha,\,\beta}y)\bullet_{\alpha\,\beta,\,\gamma}q_{\gamma}R_{\gamma}(z)+\lambda (R_{\alpha}(x)\bullet_{\alpha,\,\beta}y)\bullet_{\alpha\,\beta,\,\gamma}q_{\gamma}(z)\\
	&\hspace{1cm}(\text{by $R_{\gamma}\circ q_{\gamma}=q_{\gamma}\circ R_{\gamma}$})\\
	=&p_{\alpha}R_{\alpha}(x)\bullet_{\alpha,\,\beta\,\gamma}(y\bullet_{\beta,\,\gamma}R_{\gamma}(z))+\lambda p_{\alpha}R_{\alpha}(x)\bullet_{\alpha,\,\beta\,\gamma}(y\bullet_{\beta,\,\gamma}z)\hspace{1cm}\text{(by Eq.~(\ref{BiH-Omega-assoity}))}\\
	=&R_{\alpha}p_{\alpha}(x)\bullet_{\alpha,\,\beta\,\gamma}(y\bullet_{\beta,\,\gamma}R_{\gamma}(z)+\lambda y\bullet_{\beta,\,\gamma}z)\hspace{1.7cm}\text{(by $p_{\alpha}\circ R_{\alpha}=R_{\alpha}\circ p_{\alpha}$)}\\
	=&p_{\alpha}(x)\succ'_{\alpha,\,\beta\,\gamma}(y\prec'_{\beta,\,\gamma}z).
\end{align*}
\begin{align*}
	p_{\alpha}(x)\succ'_{\alpha,\,\beta\,\gamma}(y\succ'_{\beta,\,\gamma}z)=&p_{\alpha}(x)\succ'_{\alpha,\,\beta\,\gamma}(R_{\beta}(y)\bullet_{\beta,\,\gamma}z)=R_{\alpha}p_{\alpha}(x)\bullet_{\alpha,\,\beta\,\gamma}(R_{\beta}(y)\bullet_{\beta,\,\gamma}z)\\
	=&p_{\alpha}R_{\alpha}(x)\bullet_{\alpha,\,\beta\,\gamma}(R_{\beta}(y)\bullet_{\beta,\,\gamma}z)\hspace{1cm}\text{( by }R_{\alpha}\circ p_{\alpha}=p_{\alpha}\circ R_{\alpha})\\
	=&(R_{\alpha}(x)\bullet_{\alpha,\,\beta}R_{\beta}(y))\bullet_{\alpha\,\beta,\,\gamma}q_{\gamma}(z)\hspace{1cm}\text{(by Eq.~(\ref{BiH-Omega-assoity}))}\\
	=&R_{\alpha\,\beta}(x\bullet_{\alpha,\,\beta}R_{\beta}(y)+R_{\alpha}(x)\bullet_{\alpha,\,\beta}y+\lambda x\bullet_{\alpha,\,\beta}y)\bullet_{\alpha\,\beta,\,\gamma}q_{\gamma}(z)\hspace{1cm}\text{(by Eq.~(\ref{RB-Eq}))}\\
	=&(x\bullet_{\alpha,\,\beta}R_{\beta}(y)+\lambda x\bullet_{\alpha,\,\beta}y+R_{\alpha}(x)\bullet_{\alpha,\,\beta}y)\succ'_{\alpha\,\beta,\,\gamma}q_{\gamma}(z)\\
	=&(x\prec'_{\alpha,\,\beta} y+x\succ' _{\alpha,\,\beta}y)\succ'_{\alpha\,\beta,\,\gamma} q_{\gamma}(z).
\end{align*}
This completes the proof.
\end{proof}

\section{BiHom-$\Omega$-pre-Lie algebras and BiHom-$\Omega$-Lie algebras}\label{sec3}

In this section, we assume that $\Omega$ is a commutative semigroup.
First, we introduce the definitions of BiHom-$\Omega$-pre-Lie algebras and BiHom-$\Omega$-Lie algebras, then we obtain a BiHom-$\Omega$ analogue of the classical result of Aguiar~\cite{aguiar}.

\subsection{BiHom-$\Omega$-pre-Lie algebras}
The concept of $\Omega$-pre-Lie algebras was proposed in \cite{Aguiar}, as a generalization of pre-Lie algebras invented by Gerstenhaber and Vinberg~\cite{preLie-1,preLie-2}.

\begin{defn}\cite{Aguiar}
	An $\mathbf{\Omega}$-$\mathbf{pre}$-$\mathbf{Lie \, algebra}$ $ (A,\rhd_{\alpha,\,\beta})_{\alpha,\,\beta\in \Omega} $ is a vector space $A$ equipped with a family of bilinear maps $(\rhd_{\alpha,\,\beta} :A\otimes A\rightarrow A )_{\alpha,\,\beta \in \Omega}$ such that
	\begin{align}\label{Omega-prelie-alg}
		x\rhd_{\alpha,\,\beta\gamma} (y\rhd_{\beta,\,\gamma} z)-(x\rhd_{\alpha,\,\beta} y)\rhd_{\alpha\beta,\,\gamma} z=y\rhd_{\beta,\,\alpha\gamma} (x\rhd_{\alpha,\gamma} z)-(y\rhd_{\beta,\,\alpha} x)\rhd_{\beta\alpha,\gamma} z,
	\end{align}
	for all $x, y, z\in A ,\,\alpha,\,\beta,\,\gamma \in \Omega $.
\end{defn}

\begin{defn}
Let $ (A,\rhd_{\alpha,\,\beta})_{\alpha,\,\beta\in \Omega} $ and $ (A',\rhd'_{\alpha,\,\beta})_{\alpha,\,\beta\in \Omega} $ be two $\Omega$-pre-Lie algebras. A family of linear maps $ (f_{\alpha})_{\alpha\in \Omega}: A \rightarrow A' $ is called an $\mathbf{\Omega}$-$\mathbf{pre}$-$\mathbf{Lie \, algebra \, morphism}$ if
\begin{align}\label{Omega-prelie-morphism}
	f_{\alpha\,\beta}(x\rhd_{\alpha,\,\beta}y)=f_{\alpha}(x)\rhd'_{\alpha,\,\beta}f_{\beta}(y),
\end{align}
for all $ x,y\in A,\,\alpha,\,\beta\in \Omega. $
\end{defn}

 BiHom-pre-Lie algebras, as the BiHom version of classical pre-Lie algebras, has been proposed in \cite{bihomprelie}. Similarly, we give the BiHom version of $\Omega$-pre-Lie algebras.

\begin{defn}
   A $\mathbf{BiHom}$-$\mathbf{\Omega}$-$\mathbf{pre}$-$\mathbf{Lie \, algebra}$ $(A, \blacktriangleright_{\alpha,\,\beta} , p_{\alpha} , q_{\alpha} )_{\alpha,\,\beta\in \Omega}$ is a vector space $A$ equipped with a family of bilinear maps $ (\blacktriangleright_{\alpha,\,\beta} :A\otimes A\rightarrow A )_{\alpha,\,\beta \in \Omega}$ and two commuting $\Omega$-pre-Lie algebra morphisms $p_{\alpha} , q_{\alpha} :A\rightarrow A$ such that

	$ p_{\alpha}q_{\alpha} (x)\blacktriangleright_{\alpha,\,\beta\gamma} (p_{\beta} (y)\blacktriangleright_{\beta,\,\gamma} z)-(q_{\alpha} (x)\blacktriangleright_{\alpha,\,\beta} p_{\beta} (y))\blacktriangleright_{\alpha\beta,\,\gamma} q_{\gamma} (z) $
	\begin{align}\label{BHO-prelie-alg}
		&=p_{\beta} q_{\beta} (y)\blacktriangleright_{\beta,\,\alpha\gamma} (p_{\alpha}(x)\blacktriangleright_{\alpha,\gamma} z)-(q_{\beta}(y)\blacktriangleright_{\beta,\,\alpha} p_{\alpha}(x))\blacktriangleright_{\beta\alpha,\gamma} q_{\gamma}(z),
	\end{align}
for all $x, y, z\in A, \,\alpha,\,\beta,\,\gamma\in \Omega$. The maps $p_{\alpha} $ and $q_{\alpha}$ (in this order) are called the structure maps
of $A$. 	
\end{defn}

	\begin{defn}
	Let $(A, \blacktriangleright_{\alpha,\,\beta} , p_{\alpha}, q_{\alpha} )_{\alpha,\,\beta \in \Omega}$ and $(A', \blacktriangleright_{\alpha,\,\beta} ', p'_{\alpha}, q'_{\alpha})_{\alpha,\,\beta\in \Omega}$ be two BiHom-$\Omega$-pre-Lie algebras. A family of linear maps $(f_{\alpha})_{\alpha\in \Omega}:A\rightarrow A'$ is called a $\mathbf{BiHom}$-$\mathbf{\Omega}$-$\mathbf{pre}$-$\mathbf{Lie \, algebra}$ $\mathbf{morphism}$ if $(f_{\alpha})_{\alpha\in\Omega}$ is an $\Omega$-pre-Lie algebra morphism and
	\[f_{\alpha}\circ p_{\alpha} =p'_{\alpha}\circ f_{\alpha} ,\quad f_{\alpha}\circ q_{\alpha} =q'_{\alpha}\circ f_{\alpha},\quad \text{for all }\alpha\in \Omega.\]
\end{defn}

Inspired by \cite{bihomprelie}, we have the similar property of $\Omega$-pre-Lie algebras as follows.

\begin{prop}\label{Yautwist-preLie}
	Let $(A, \rhd_{\alpha,\,\beta} )_{\alpha,\,\beta \in \Omega}$ be an $\Omega$-pre-Lie algebra. If $p_{\alpha} , q_{\alpha} :A\rightarrow A$ are two commuting $\Omega$-pre-Lie algebra morphisms. Define the multiplications on $A$ by
	\[x\blacktriangleright_{\alpha,\,\beta}y:=p_{\alpha}(x)\rhd_{\alpha,\,\beta} q_{\beta}(y), \]
for all $x, y\in A,\,\alpha,\,\beta\in \Omega,$	then $(A, \blacktriangleright_{\alpha,\,\beta} , p_{\alpha} , q_{\alpha} )_{\alpha,\,\beta \in \Omega}$ is a BiHom-$\Omega$-pre-Lie algebra,
	called the Yau twist of $(A, \rhd_{\alpha,\,\beta} )_{\alpha,\,\beta \in\Omega}$.
\end{prop}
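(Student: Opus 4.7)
The plan is to mirror the argument of Proposition~\ref{asso-Yautwist} (the Yau twist for BiHom-$\Omega$-associative algebras): first verify the multiplicativity of $p_\alpha$ and $q_\alpha$ with respect to the twisted operation $\blacktriangleright_{\alpha,\,\beta}$, and then verify the central identity~(\ref{BHO-prelie-alg}) by unfolding both sides to the $\rhd$-operation and applying the original $\Omega$-pre-Lie identity~(\ref{Omega-prelie-alg}).

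For multiplicativity, I would compute
\[ p_{\alpha\,\beta}(x\blacktriangleright_{\alpha,\,\beta}y) = p_{\alpha\,\beta}\bigl(p_\alpha(x)\rhd_{\alpha,\,\beta} q_\beta(y)\bigr), \]
apply the fact that $p_{\alpha\,\beta}$ is an $\Omega$-pre-Lie morphism for $\rhd$ (which distributes $p$ onto each argument), and then use $p_\beta \circ q_\beta = q_\beta \circ p_\beta$ to reassemble the result as $p_\alpha(x)\blacktriangleright_{\alpha,\,\beta} p_\beta(y)$. The computation for $q_{\alpha\,\beta}$ is identical with the roles of $p$ and $q$ swapped, so $p_\alpha$ and $q_\alpha$ qualify as $\Omega$-pre-Lie morphisms for the new operation.

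For Eq.~(\ref{BHO-prelie-alg}), the idea is to substitute the definition $u\blacktriangleright_{\sigma,\,\tau} v = p_\sigma(u)\rhd_{\sigma,\,\tau} q_\tau(v)$ in every occurrence and push all the $p$'s and $q$'s outward using the morphism property and the commutation $p \circ q = q \circ p$. Setting $X := p_\alpha^{2} q_\alpha(x)$, $Y := p_\beta^{2} q_\beta(y)$, $Z := q_\gamma^{2}(z)$, a direct bookkeeping reveals that the left-hand side of Eq.~(\ref{BHO-prelie-alg}) collapses to
\[ X\rhd_{\alpha,\,\beta\,\gamma}(Y\rhd_{\beta,\,\gamma} Z) - (X\rhd_{\alpha,\,\beta} Y)\rhd_{\alpha\,\beta,\,\gamma} Z, \]
while the right-hand side collapses to
\[ Y\rhd_{\beta,\,\alpha\,\gamma}(X\rhd_{\alpha,\,\gamma} Z) - (Y\rhd_{\beta,\,\alpha} X)\rhd_{\beta\,\alpha,\,\gamma} Z. \]
Their equality is exactly Eq.~(\ref{Omega-prelie-alg}) evaluated at the triple $(X, Y, Z)$.

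The only genuine obstacle is bookkeeping: tracking the indices $\alpha,\beta,\gamma$ and the precise powers of $p$ and $q$ that accrue on each slot after repeated unfolding. There is no new identity to prove; beyond the existing hypotheses (commutativity of $p_\alpha$ with $q_\alpha$, the morphism property with respect to $\rhd$, and the $\Omega$-pre-Lie axiom), the commutativity of $\Omega$ declared at the start of Section~\ref{sec3} is what makes the symmetric identity well-posed, and no further assumption is required.
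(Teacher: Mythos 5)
Your proposal is correct and follows essentially the same route as the paper: unfold every $\blacktriangleright$ into $\rhd$, push the morphism property and the commutation $p\circ q=q\circ p$ through until both sides become the $\Omega$-pre-Lie identity evaluated at $X=p_{\alpha}^{2}q_{\alpha}(x)$, $Y=p_{\beta}^{2}q_{\beta}(y)$, $Z=q_{\gamma}^{2}(z)$. The only cosmetic difference is that you also record the multiplicativity check explicitly (as in Proposition~\ref{asso-Yautwist}), while the paper states that only Eq.~(\ref{BHO-prelie-alg}) needs verification.
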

\begin{proof}
	 For any $ x,y,z\in A,\,\alpha,\,\beta,\,\gamma\in \Omega ,$ we only need to prove Eq.~(\ref{BHO-prelie-alg}).
	\begin{align*}
		p&_{\alpha}q_{\alpha}(x)\blacktriangleright_{\alpha,\,\beta\,\gamma} (p_{\beta}(y)\blacktriangleright_{\beta,\,\gamma} z)-(q_{\alpha} (x)\blacktriangleright_{\alpha,\,\beta} p_{\beta} (y))\blacktriangleright_{\alpha\,\beta,\,\gamma} q_{\gamma} (z)\\
		=&p_{\alpha}q_{\alpha}(x)\blacktriangleright_{\alpha,\,\beta\,\gamma}( p_{\beta}^2( y) \rhd_{\beta,\,\gamma}q_{\gamma}(z))-(p_{\alpha}q_{\alpha}(x)\rhd_{\alpha,\,\beta }q_{\beta}p_{\beta}(y))\blacktriangleright_{\alpha\,\beta,\,\gamma}q_{\gamma}(z)\\
		=&p_{\alpha}^{2}q_{\alpha}(x)\rhd_{\alpha,\,\beta\,\gamma}q_{\beta\,\gamma}(p_{\beta}^{2}(y)\rhd_{\beta,\,\gamma}q_{\gamma}(z))-p_{\alpha\,\beta}(p_{\alpha}q_{\alpha}(x)\rhd_{\alpha,\,\beta}q_{\beta}p_{\beta}(y))\rhd_{\alpha\,\beta,\,\gamma}q_{\gamma}^{2}(z)\\
		=&p_{\alpha} ^2q_{\alpha} (x)\rhd_{\alpha,\,\beta\,\gamma} (q_{\beta}p_{\beta} ^2 (y)\rhd_{\beta,\,\gamma} q_{\gamma} ^2(z))-(p_{\alpha}^2q_{\alpha} (x)\rhd_{\alpha,\,\beta}
		p_{\beta}q_{\beta}p_{\beta} (y))\rhd_{\alpha\,\beta,\,\gamma} q_{\gamma}^2(z)\\
		&\hspace{1cm}\text{(by $q_{\beta\,\gamma},p_{\alpha\,\beta}$ satisfying Eq.~(\ref{Omega-prelie-morphism})})\\
		=&q_{\alpha}p_{\alpha}^{2}(x)\rhd_{\alpha,\,\beta\,\gamma}(p_{\beta}^{2}q_{\beta}(y)\rhd_{\beta,\,\gamma}q_{\gamma}^{2}(z))-(q_{\alpha}p_{\alpha}^{2}(x)\rhd_{\alpha,\,\beta}p_{\beta}^{2}q_{\beta}(y))\rhd_{\alpha\,\beta,\,\gamma}q_{\gamma}^{2}(z)\\
		&\hspace{1cm}\text{(by $ p_{\alpha}\circ q_{\alpha}=q_{\alpha}\circ p_{\alpha} $})\\
		=&p_{\beta}^{2}q_{\beta}(y)\rhd_{\beta,\,\alpha\,\gamma}(q_{\alpha}p_{\alpha}^{2}(x)\rhd_{\alpha,\,\gamma}q_{\gamma}^{2}(z))-(p_{\beta}^{2}q_{\beta}(y)\rhd_{\beta,\,\alpha}q_{\alpha}p_{\alpha}^{2}(x))\rhd_{\beta\,\alpha,\,\gamma}q_{\gamma}^{2}(z)\\
		&\hspace{1cm}\text{(by $(A,\rhd_{\alpha,\,\beta})_{\alpha,\,\beta\in \Omega}$ satisfying Eq.~(\ref{Omega-prelie-alg}))}\\
		=&p_{\beta}^{2}q_{\beta}(y)\rhd_{\beta,\,\alpha\,\gamma}(q_{\alpha}p_{\alpha}^{2}(x)\rhd_{\alpha,\,\gamma}q_{\gamma}^{2}(z))-(p_{\beta}^{2}q_{\beta}(y)\rhd_{\beta,\,\alpha}p_{\alpha}q_{\alpha}p_{\alpha}(x))\rhd_{\beta\,\alpha,\,\gamma}q_{\gamma}^{2}(z)\\
		&\hspace{1cm}\text{(by $ q_{\alpha}\circ p_{\alpha}=p_{\alpha}\circ q_{\alpha} $})\\
		=&p_{\beta}^{2}q_{\beta}(y)\rhd_{\beta,\,\alpha\,\gamma}q_{\alpha\,\gamma}(p_{\alpha}^{2}(x)\rhd_{\alpha,\,\gamma}q_{\gamma}(z))-p_{\beta\,\alpha}(p_{\beta}q_{\beta}(y)\rhd_{\beta,\,\alpha}q_{\alpha}p_{\alpha}(x))\rhd_{\beta\,\alpha,\,\gamma}q_{\gamma}^{2}(z)\\
		&\hspace{1cm}\text{(by $p_{\alpha},q_{\alpha}$ satisfying Eq.~(\ref{Omega-prelie-morphism})})\\
		=&p_{\beta}q_{\beta}(y)\blacktriangleright_{\beta,\,\alpha\,\gamma}(p_{\alpha}^{2}(x)\rhd_{\alpha,\,\gamma}q_{\gamma}(z))-(p_{\beta}q_{\beta}(y)\rhd_{\beta,\,\alpha}q_{\alpha}p_{\alpha}(x))\blacktriangleright_{\beta\,\alpha,\,\gamma}q_{\gamma}(z)\\
		=&p_{\beta}q_{\beta}(y)\blacktriangleright_{\beta,\,\alpha\,\gamma}(p_{\alpha}(x)\blacktriangleright_{\alpha,\,\gamma} z)-(q_{\beta}(y)\blacktriangleright_{\beta,\,\alpha} p_{\alpha}(x))\blacktriangleright_{\beta\,\alpha,\,\gamma} q_{\gamma}(z).
	\end{align*}
	This completes the proof.
\end{proof}

The following result is a more general of the Yau twisting procedure for BiHom-$\Omega$-pre-Lie algebras and the proof is similar to Proposition~\ref{Yautwist-preLie}.

\begin{prop}
	Let $(A, \blacktriangleright_{\alpha,\,\beta} ,p_{\alpha} , q_{\alpha} )_{\alpha,\,\beta \in \Omega}$ be a BiHom-$\Omega$-pre-Lie algebra. If $p'_{\alpha}, q'_{\alpha}:
	A\rightarrow A$ are two BiHom-$\Omega$-pre-Lie algebra morphisms and any two families of the maps $p_{\alpha}, q_{\alpha} ,
	p'_{\alpha}, q'_{\alpha}$ commute with each other. Define the multiplication on $A$ by
	\[x \blacktriangleright'_{\alpha,\,\beta}y:=p'_{\alpha}(x)\blacktriangleright_{\alpha,\,\beta} q'_{\beta}(y),\]
for all $x, y\in A,\,\alpha,\,\beta\in \Omega,$	then $(A, \blacktriangleright'_{\alpha,\,\beta}, p_{\alpha} \circ p'_{\alpha}, q_{\alpha} \circ q'_{\alpha})_{\alpha,\,\beta \in \Omega}$
	is a BiHom-$\Omega$-pre-Lie algebra.
\end{prop}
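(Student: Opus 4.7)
The plan is to verify the three requirements in the definition of a BiHom-$\Omega$-pre-Lie algebra for the proposed structure $(A,\blacktriangleright'_{\alpha,\beta},P_{\alpha},Q_{\alpha})_{\alpha,\beta\in\Omega}$, where I write $P_{\alpha}:=p_{\alpha}\circ p'_{\alpha}$ and $Q_{\alpha}:=q_{\alpha}\circ q'_{\alpha}$ for brevity. The first requirement, that $P_{\alpha}$ and $Q_{\alpha}$ commute, is an immediate consequence of the pairwise commutativity assumption on the four families $p_{\alpha},q_{\alpha},p'_{\alpha},q'_{\alpha}$.

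Second, I would check that $P_{\alpha}$ and $Q_{\alpha}$ are $\Omega$-pre-Lie algebra morphisms for $\blacktriangleright'$. Starting from $P_{\alpha\beta}(x\blacktriangleright'_{\alpha,\beta}y)=p_{\alpha\beta}p'_{\alpha\beta}(p'_{\alpha}(x)\blacktriangleright_{\alpha,\beta}q'_{\beta}(y))$, I would first apply that $p'_{\alpha\beta}$ is a BiHom-$\Omega$-pre-Lie algebra morphism (hence preserves $\blacktriangleright$ in the sense of Eq.~(\ref{Omega-prelie-morphism})), then apply that $p_{\alpha\beta}$ is a structure map of the given BiHom-$\Omega$-pre-Lie algebra (and therefore an $\Omega$-pre-Lie algebra morphism for $\blacktriangleright$), and finally use the commutativity among $p_{\alpha},p'_{\alpha},q'_{\alpha}$ to rearrange factors so as to recognize $P_{\alpha}(x)\blacktriangleright'_{\alpha,\beta}P_{\beta}(y)$. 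The verification for $Q_{\alpha}$ is parallel.

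Third and most substantial, I need to establish Eq.~(\ref{BHO-prelie-alg}) for $\blacktriangleright'$ with structure maps $P_{\alpha},Q_{\alpha}$. The strategy is exactly the one used in the proof of Proposition~\ref{Yautwist-preLie}: first, expand every occurrence of $\blacktriangleright'$ on both sides into $\blacktriangleright$ with the prime maps $p'_{\alpha},q'_{\beta}$ wrapped around each argument; then push the outer prime morphisms inward on every product of the form $u\blacktriangleright_{\mu,\nu}v$ by using that $p'_{\mu\nu}$ and $q'_{\mu\nu}$ are BiHom-$\Omega$-pre-Lie morphisms (so they preserve $\blacktriangleright$); next, use the pairwise commutativity of $p_{\alpha},q_{\alpha},p'_{\alpha},q'_{\alpha}$ to group all primed maps on the outside and all unprimed structure maps in the canonical $p_{\mu}q_{\mu}$-pattern required by Eq.~(\ref{BHO-prelie-alg}); finally, apply the BiHom-$\Omega$-pre-Lie identity Eq.~(\ref{BHO-prelie-alg}) for the original algebra $(A,\blacktriangleright_{\alpha,\beta},p_{\alpha},q_{\alpha})$ to the resulting expression, which swaps the roles of the first two arguments, and retrace the steps in reverse on the right-hand side to collapse everything back into the $\blacktriangleright'$ form.

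I expect the main obstacle to be purely bookkeeping: after the expansions, each of the four terms in Eq.~(\ref{BHO-prelie-alg}) carries a cluster of four or five composed operators (powers of $p_{\alpha},q_{\alpha},p'_{\alpha},q'_{\alpha}$ at various indices), and the delicate part is to make sure that after all the shuffling the unprimed operators line up in exactly the pattern $p_{\mu}q_{\mu}(\cdot)\blacktriangleright_{\mu,\nu\gamma}(p_{\nu}(\cdot)\blacktriangleright_{\nu,\gamma}(\cdot))$ so that the original identity applies cleanly. No fundamentally new ideas beyond those in Proposition~\ref{Yautwist-preLie} are required, which is why the authors remark that the proof is similar.
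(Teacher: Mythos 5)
Your plan is correct and follows exactly the route the paper intends: the authors state that the proof is "similar to Proposition~\ref{Yautwist-preLie}," and your three steps (commutativity of the composed structure maps, the morphism property via combining the morphism properties of $p_{\alpha\beta},q_{\alpha\beta}$ and $p'_{\alpha\beta},q'_{\alpha\beta}$ with pairwise commutativity, and the verification of Eq.~(\ref{BHO-prelie-alg}) by expanding $\blacktriangleright'$, pushing the primed morphisms inward, regrouping, and invoking the identity for $\blacktriangleright$) are precisely the computation carried out in that proposition and in the analogous associative case. No gap; the remaining work is the bookkeeping you describe.
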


The concept of BiHom-$\Omega$-dendriform algebras has been given in Definition~\ref{BiHom-den-fam}. When the structure maps are bijective, we get a BiHom-$\Omega$-pre-Lie algebra from the BiHom-$\Omega$-dendriform algebra as follows.

\begin{prop} \label{BHdendpreLie}
	Let $(A, \prec_{\alpha,\,\beta} , \succ_{\alpha,\,\beta} , p_{\alpha} , q_{\alpha} )_{\alpha,\,\beta\in \Omega}$ be a BiHom-$\Omega$-dendriform algebra. If $p_{\alpha} ,\,q_{\alpha} $
	are bijective and we define the operation by
	\begin{eqnarray*}
		&&x\blacktriangleright_{\alpha,\,\beta} y:=x\succ_{\alpha,\,\beta} y-(p_{\beta}^{-1}q_{\beta}(y))\prec_{\beta,\,\alpha} (p_{\alpha}q_{\alpha}^{-1}(x)), \;\;\;\;\;
	\end{eqnarray*}
for all $x, y\in A,\,\alpha,\,\beta\in \Omega$. Then $(A, \blacktriangleright_{\alpha,\,\beta} , p_{\alpha} , q_{\alpha} )_{\alpha,\,\beta\in \Omega}$  
	is a BiHom-$\Omega$-pre-Lie algebra.
\end{prop}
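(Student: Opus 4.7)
The plan is to verify the two multiplicativity identities together with the BiHom-$\Omega$-pre-Lie identity~\eqref{BHO-prelie-alg} for the new operation $\blacktriangleright_{\alpha,\beta}$, in direct analogy with Aguiar's classical dendriform-to-pre-Lie correspondence but twisted by the structure maps. Throughout, I would exploit the fact that $p_\alpha$ and $q_\alpha$ commute and are bijective, so the twist operators $u_\alpha:=p_\alpha q_\alpha^{-1}$ and $v_\alpha:=p_\alpha^{-1}q_\alpha$ are inverse to each other and commute with every $p_\beta^{\pm 1}$ and $q_\beta^{\pm 1}$. I would also use that in Section~\ref{sec3} the semigroup $\Omega$ is commutative, so $\alpha\beta=\beta\alpha$; this is exactly what makes the right-hand side of~\eqref{BHO-prelie-alg} equal to the left-hand side under the relabeling $(x,\alpha)\leftrightarrow(y,\beta)$.

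First I would settle multiplicativity. Writing the definition as $x\blacktriangleright_{\alpha,\beta}y = x\succ_{\alpha,\beta}y - v_\beta(y)\prec_{\beta,\alpha}u_\alpha(x)$, applying $p_{\alpha\beta}$, distributing, and using~\eqref{BiHomdend2} together with the commutation of $p_\alpha$ with $u_\alpha$ and $v_\alpha$, one gets $p_{\alpha\beta}(x\blacktriangleright_{\alpha,\beta}y)=p_\alpha(x)\blacktriangleright_{\alpha,\beta}p_\beta(y)$; the corresponding identity for $q$ uses~\eqref{BiHomdend3} in the same way. This is essentially routine and requires no dendriform axiom beyond multiplicativity.

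The main step is~\eqref{BHO-prelie-alg}. I would expand both sides by the definition of $\blacktriangleright$, so that each side becomes a sum of four terms, one for each choice $\{\succ,\prec\}\times\{\succ,\prec\}$. Pairing the resulting eight terms of the left associator $p_\alpha q_\alpha(x)\blacktriangleright_{\alpha,\beta\gamma}(p_\beta(y)\blacktriangleright_{\beta,\gamma}z)-(q_\alpha(x)\blacktriangleright_{\alpha,\beta}p_\beta(y))\blacktriangleright_{\alpha\beta,\gamma}q_\gamma(z)$, the $\succ\succ$-type pieces are collapsed by~\eqref{BiHomdend6}, the mixed $\succ\prec$-type pieces by~\eqref{BiHomdend5}, and the $\prec\prec$-type pieces by~\eqref{BiHomdend4}, in each case after first matching the arguments to the $p$-on-left and $q_\gamma$-on-right pattern required by these axioms. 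This is arranged by repeatedly commuting $p,q,p^{-1},q^{-1}$ past one another and using multiplicativity~\eqref{BiHomdend2}-\eqref{BiHomdend3} to push structure maps inside products. After the dust settles every nested product has been broken, and the remaining expression is a sum of products of the form $A\succ_{\bullet,\bullet}B$ and $A\prec_{\bullet,\bullet}B$ in which the arguments $A,B$ are $p^a q^b$-decorated versions of $x,y,z$. The claim is that this expression is manifestly invariant under $(x,\alpha)\leftrightarrow(y,\beta)$.

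The main obstacle is purely bookkeeping: each of~\eqref{BiHomdend4}-\eqref{BiHomdend6} demands a very specific placement of structure maps on the outermost arguments, but after the $u$- and $v$-twists in the definition of $\blacktriangleright$ the relevant arguments carry extra $p^{\pm 1},q^{\pm 1}$ factors, and one must reshape them before invoking an axiom. No new idea beyond those used in Proposition~\ref{Yautwist-preLie} is needed; the result is a BiHom-$\Omega$-analogue of the classical fact that the anti-symmetrization $x\succ y-y\prec x$ of a dendriform product is pre-Lie.
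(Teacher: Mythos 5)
Your proposal is correct and follows essentially the same route as the paper: expand both sides of Eq.~(\ref{BHO-prelie-alg}) via the definition of $\blacktriangleright_{\alpha,\,\beta}$, reshape the structure-map decorations using bijectivity, commutativity and the morphism identities (\ref{BiHomdend2})--(\ref{BiHomdend3}), collapse the resulting terms with the axioms (\ref{BiHomdend4})--(\ref{BiHomdend6}), and match the two reduced six-term expressions using commutativity of $\Omega$. The only (harmless) differences are that you also spell out the multiplicativity check, which the paper treats as routine, and that the final comparison in the paper is a term-by-term match of the two reduced sides rather than a literally manifest $(x,\alpha)\leftrightarrow(y,\beta)$ symmetry of a single expression.
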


\begin{proof}
	For $ x,\,y,\,z\in A,\,\alpha,\,\beta,\,\gamma\in \Omega,$ we only need to check Eq.~(\ref{BHO-prelie-alg}).
 On the one hand, we have
	\begin{align*}
		p&_{\alpha}q_{\alpha}(x)\blacktriangleright_{\alpha,\,\beta\,\gamma}(p_{\beta}(y)\blacktriangleright_{\beta,\,\gamma}z)-(q_{\alpha}(x)\blacktriangleright_{\alpha,\,\beta}p_{\beta}(y))\blacktriangleright_{\alpha\,\beta,\,\gamma}q_{\gamma}(z)\\
		=&p_{\alpha}q_{\alpha}(x)\succ_{\alpha,\,\beta\,\gamma}(p_{\beta}(y)\blacktriangleright_{\beta,\,\gamma}z)-(p_{\beta\,\gamma}^{-1}q_{\beta\,\gamma}(p_{\beta}(y)\blacktriangleright_{\beta,\,\gamma}z))\prec_{\beta\,\gamma,\,\alpha}(p_{\alpha}q_{\alpha}^{-1}p_{\alpha}q_{\alpha}(x))\\
		&-(q_{\alpha}(x)\blacktriangleright_{\alpha,\,\beta}p_{\beta}(y))\succ_{\alpha\,\beta,\,\gamma}q_{\gamma}(z)+p_{\gamma}^{-1}q_{\gamma}^{2}(z)\prec_{\gamma,\,\alpha\,\beta}p_{\alpha\,\beta}q_{\alpha\,\beta}^{-1}(q_{\alpha}(x)\blacktriangleright_{\alpha,\,\beta}p_{\beta}(y))\\
		=&p_{\alpha}q_{\alpha}(x)\succ_{\alpha,\,\beta\,\gamma}(p_{\beta}(y)\succ_{\beta,\,\gamma}z)-p_{\beta\,\gamma}^{-1}q_{\beta\,\gamma}(p_{\beta}(y)\succ_{\beta,\,\gamma}z)\prec_{\beta\,\gamma,\,\alpha}p_{\alpha}^{2}(x)\\
		&-p_{\alpha}q_{\alpha}(x)\succ_{\alpha,\,\beta\,\gamma}(p_{\gamma}^{-1}q_{\gamma}(z)\prec_{\gamma,\,\beta}p_{\beta}^{2}q_{\beta}^{-1}(y))+p_{\beta\,\gamma}^{-1}q_{\beta\,\gamma}(p_{\gamma}^{-1}q_{\gamma}(z)\prec_{\gamma,\,\beta}p_{\beta}^{2}q_{\beta}^{-1}(y))\prec_{\beta\,\gamma,\,\alpha}p_{\alpha}^{2}(x)\\
		&-(q_{\alpha}(x)\succ_{\alpha,\,\beta}p_{\beta}(y))\succ_{\alpha\,\beta,\,\gamma}q_{\gamma}(z)+p_{\gamma}^{-1}q_{\gamma}^{2}(z)\prec_{\gamma,\,\alpha\,\beta}p_{\alpha\,\beta}q_{\alpha\,\beta}^{-1}(q_{\alpha}(x)\succ_{\alpha,\,\beta}p_{\beta}(y))\\
		&+(q_{\beta}(y)\prec_{\beta,\,\alpha}p_{\alpha}(x))\succ_{\alpha\,\beta,\,\gamma}q_{\gamma}(z)-p_{\gamma}^{-1}q_{\gamma}^{2}(z)\prec_{\gamma,\,\alpha\,\beta}p_{\alpha\,\beta}q_{\alpha\,\beta}^{-1}(q_{\beta}(y)\prec_{\beta,\,\alpha}p_{\alpha}(x))\\
		=&p_{\alpha}q_{\alpha}(x)\succ_{\alpha,\,\beta\,\gamma}(p_{\beta}(y)\succ_{\beta,\,\gamma}z)-(q_{\beta}(y)\succ_{\beta,\,\gamma}p_{\gamma}^{-1}q_{\gamma}(z))\prec_{\beta\,\gamma,\,\alpha}p_{\alpha}^{2}(x)\\
		&-p_{\alpha}q_{\alpha}(x)\succ_{\alpha,\,\beta\,\gamma}(p_{\gamma}^{-1}q_{\gamma}(z)\prec_{\gamma,\,\beta}p_{\beta}^{2}q_{\beta}^{-1}(y))+(p_{\gamma}^{-2}q_{\gamma}^{2}(z)\prec_{\gamma,\,\beta}p_{\beta}(y))\prec_{\beta\,\gamma,\,\alpha}p_{\alpha}^{2}(x)\\
		&-(q_{\alpha}(x)\succ_{\alpha,\,\beta}p_{\beta}(y))\prec_{\alpha\,\beta,\,\gamma}q_{\gamma}(z)+p_{\gamma}^{-1}q_{\gamma}^{2}(z)\prec_{\gamma,\,\alpha\,\beta}(p_{\alpha}(x)\succ_{\alpha,\,\beta}p_{\beta}^{2}q_{\beta}^{-1}(y))\\
		&+(q_{\beta}(y)\prec_{\beta,\,\alpha}p_{\alpha}(x))\succ_{\alpha\,\beta,\,\gamma}q_{\gamma}(z)-p_{\gamma}^{-1}q_{\gamma}^{2}(z)\prec_{\gamma,\,\alpha\,\beta}(p_{\beta}(y)\prec_{\beta,\,\alpha}p_{\alpha}^{2}q_{\alpha}^{-1}(x))\\
		&\hspace{1cm}\text{(by $ p_{\beta\,\gamma}^{-1},q_{\beta\,\gamma},p_{\alpha\,\beta},q_{\alpha\,\beta}^{-1} $ satisfying Eq.~(\ref{Omega-den-morphism})})\\
        =&(q_{\alpha}(x)\prec_{\alpha,\,\beta}p_{\beta}(y))\succ_{\alpha\,\beta,\,\gamma}q_{\gamma}(z)-p_{\beta}q_{\beta}(y)\succ_{\beta,\,\gamma\,\alpha}(p_{\gamma}^{-1}q_{\gamma}(z)\prec_{\gamma,\,\alpha}q_{\alpha}^{-1}p_{\alpha}^{2}(x))\\
        &-(q_{\alpha}(x)\succ_{\alpha,\,\gamma}p_{\gamma}^{-1}q_{\gamma}(z))\prec_{\alpha\,\gamma,\,\beta}p_{\beta}^{2}(y)+p_{\gamma}^{-1}q_{\gamma}^{2}(z)\prec_{\gamma,\,\alpha\,\beta}(p_{\beta}(y)\succ_{\beta,\,\alpha}q_{\alpha}^{-1}p_{\alpha}^{2}(x))\\
        &+p_{\gamma}^{-1}q_{\gamma}^{2}(z)\prec_{\gamma,\,\alpha\,\beta}(p_{\alpha}(x)\succ_{\alpha,\,\beta}p_{\beta}^{2}q_{\beta}^{-1}(y))+(q_{\beta}(y)\prec_{\beta,\,\alpha}p_{\alpha}(x))\succ_{\alpha\,\beta,\,\gamma}q_{\gamma}(z).\hspace{0.3cm}\text{(by Eqs.~(\ref{BiHomdend4})-(\ref{BiHomdend6}))}		
	\end{align*}
On the other hand, we have
\begin{align*}
	p&_{\beta}q_{\beta}(y)\blacktriangleright_{\beta,\,\alpha\,\gamma}(p_{\alpha}(x)\blacktriangleright_{\alpha,\,\gamma}z)-(q_{\beta}(y)\blacktriangleright_{\beta,\,\alpha}p_{\alpha}(x))\blacktriangleright_{\beta\,\alpha,\,\gamma}q_{\gamma}(z)\\
	=&p_{\beta}q_{\beta}(y)\succ_{\beta,\,\alpha\,\gamma}(p_{\alpha}(x)\blacktriangleright_{\alpha,\,\gamma}z)-p_{\alpha\,\gamma}^{-1}q_{\alpha\,\gamma}(p_{\alpha}(x)\blacktriangleright_{\alpha,\,\gamma}z)\prec_{\alpha\,\gamma,\,\beta}p_{\beta}q_{\beta}^{-1}p_{\beta}q_{\beta}(y)\\
	&-(q_{\beta}(y)\blacktriangleright_{\beta,\,\alpha}p_{\alpha}(x))\succ_{\beta\,\alpha,\,\gamma}q_{\gamma}(z)+p_{\gamma}^{-1}q_{\gamma}^{2}(z)\prec_{\gamma,\,\beta\,\alpha}p_{\beta\,\alpha}q_{\beta\,\alpha}^{-1}(q_{\beta}(y)\blacktriangleright_{\beta,\,\alpha}p_{\alpha}(x))\\
	=&p_{\beta}q_{\beta}(y)\succ_{\beta,\,\alpha\,\gamma}(p_{\alpha}(x)\succ_{\alpha,\,\gamma}z)-p_{\alpha\,\gamma}^{-1}q_{\alpha\,\gamma}(p_{\alpha}(x)\succ_{\alpha,\,\gamma}z)\prec_{\alpha\,\gamma,\,\beta}p_{\beta}^{2}(y)\\
	&-p_{\beta}q_{\beta}(y)\succ_{\beta,\,\alpha\,\gamma}(p_{\gamma}^{-1}q_{\gamma}(z)\prec_{\gamma,\,\alpha}p_{\alpha}^{2}q_{\alpha}^{-1}(x))+p_{\alpha\,\gamma}^{-1}q_{\alpha\,\gamma}(p_{\gamma}^{-1}q_{\gamma}(z)\prec_{\gamma,\,\alpha}p_{\alpha}^{2}q_{\alpha}^{-1}(x))\prec_{\alpha\,\gamma,\,\beta}p_{\beta}^{2}(y)\\
	&-(q_{\beta}(y)\succ_{\beta,\,\alpha}p_{\alpha}(x))\succ_{\beta\,\alpha,\,\gamma}q_{\gamma}(z)+p_{\gamma}^{-1}q_{\gamma}^{2}(z)\prec_{\gamma,\,\beta\,\alpha}p_{\beta\,\alpha}q_{\beta\,\alpha}^{-1}(q_{\beta}(y)\succ_{\beta,\,\alpha}p_{\alpha}(x))\\
	&+(q_{\alpha}(x)\prec_{\alpha,\,\beta}p_{\beta}(y))\succ_{\alpha\,\beta,\,\gamma}q_{\gamma}(z)-p_{\gamma}^{-1}q_{\gamma}^{2}(z)\prec_{\gamma,\,\alpha\,\beta}p_{\alpha\,\beta}q_{\alpha\,\beta}^{-1}(q_{\alpha}(x)\prec_{\alpha,\,\beta}p_{\beta}(y))\\
	=&p_{\beta}q_{\beta}(y)\succ_{\beta,\,\alpha\,\gamma}(p_{\alpha}(x)\succ_{\alpha,\,\gamma}z)-(q_{\alpha}(x)\succ_{\alpha,\,\gamma}p_{\gamma}^{-1}q_{\gamma}(z))\prec_{\alpha\,\gamma,\,\beta}p_{\beta}^{2}(y)\\
	&-p_{\beta}q_{\beta}(y)\succ_{\beta,\,\alpha\,\gamma}(p_{\gamma}^{-1}q_{\gamma}(z)\prec_{\gamma,\,\alpha}p_{\alpha}^{2}q_{\alpha}^{-1}(x))+(p_{\gamma}^{-2}q_{\gamma}^{2}(z)\prec_{\gamma,\,\alpha}p_{\alpha}(x))\prec_{\alpha\,\gamma,\,\beta}p_{\beta}^{2}(y)\\
	&-(q_{\beta}(y)\succ_{\beta,\,\alpha}p_{\alpha}(x))\succ_{\beta\,\alpha,\,\gamma}q_{\gamma}(z)+p_{\gamma}^{-1}q_{\gamma}^{2}(z)\prec_{\gamma,\,\beta\,\alpha}(p_{\beta}(y)\succ_{\beta,\,\alpha}p_{\alpha}^{2}q_{\alpha}^{-1}(x))\\
	&+(q_{\alpha}(x)\prec_{\alpha,\,\beta}p_{\beta}(y))\succ_{\alpha\,\beta,\,\gamma}q_{\gamma}(z)-p_{\gamma}^{-1}q_{\gamma}^{2}(z)\prec_{\gamma,\,\alpha\,\beta}(p_{\alpha}(x)\prec_{\alpha,\,\beta}p_{\beta}^{2}q_{\beta}^{-1}(y))\\
	&\hspace{1cm}\text{(by $ p_{\alpha\,\gamma}^{-1},q_{\alpha\,\gamma},p_{\alpha\,\beta},q_{\alpha\,\beta}^{-1} $ satisfying Eqs.~(\ref{BiHomdend2})-(\ref{BiHomdend3})})\\
	=&(q_{\beta}(y)\prec_{\beta,\,\alpha}p_{\alpha}(x))\succ_{\beta\,\alpha,\,\gamma}q_{\gamma}(z)-(q_{\alpha}(x)\succ_{\alpha,\,\gamma}p_{\gamma}^{-1}q_{\gamma}(z))\prec_{\alpha\,\gamma,\,\beta}p_{\beta}^{2}(y)\\
	&-p_{\beta}q_{\beta}(y)\succ_{\beta,\,\alpha\,\gamma}(p_{\gamma}^{-1}q_{\gamma}(z)\prec_{\gamma,\,\alpha}p_{\alpha}^{2}q_{\alpha}^{-1}(x))+p_{\gamma}^{-1}q_{\gamma}^{2}(z)\prec_{\gamma,\,\alpha\,\beta}(p_{\alpha}(x)\succ_{\alpha,\,\beta}q_{\beta}^{-1}p_{\beta}^{2}(y))\\
	&+p_{\gamma}^{-1}q_{\gamma}^{2}(z)\prec_{\gamma,\,\beta\,\alpha}(p_{\beta}(y)\succ_{\beta,\,\alpha}p_{\alpha}^{2}q_{\alpha}^{-1}(x))+(q_{\alpha}(x)\prec_{\alpha,\,\beta}p_{\beta}(y))\succ_{\alpha\,\beta,\,\gamma}q_{\gamma}(z)\hspace{0.3cm}\text{(by Eq.~(\ref{BiHomdend4}) and Eq.~(\ref{BiHomdend6}))}	\\
	=&(q_{\beta}(y)\prec_{\beta,\,\alpha}p_{\alpha}(x))\succ_{\beta\,\alpha,\,\gamma}q_{\gamma}(z)-(q_{\alpha}(x)\succ_{\alpha,\,\gamma}p_{\gamma}^{-1}q_{\gamma}(z))\prec_{\alpha\,\gamma,\,\beta}p_{\beta}^{2}(y)\\
	&-p_{\beta}q_{\beta}(y)\succ_{\beta,\,\alpha\,\gamma}(p_{\gamma}^{-1}q_{\gamma}(z)\prec_{\gamma,\,\alpha}q_{\alpha}^{-1}p_{\alpha}^{2}(x))+p_{\gamma}^{-1}q_{\gamma}^{2}(z)\prec_{\gamma,\,\alpha\,\beta}(p_{\alpha}(x)\succ_{\alpha,\,\beta}p_{\beta}^{2}q_{\beta}^{-1}(y))\\
	&+p_{\gamma}^{-1}q_{\gamma}^{2}(z)\prec_{\gamma,\,\beta\,\alpha}(p_{\beta}(y)\succ_{\beta,\,\alpha}q_{\alpha}^{-1}p_{\alpha}^{2}(x))+(q_{\alpha}(x)\prec_{\alpha,\,\beta}p_{\beta}(y))\succ_{\alpha\,\beta,\,\gamma}q_{\gamma}(z).\hspace{0.3cm}\text{(by $ p_{\alpha}q_{\alpha}^{-1}=q_{\alpha}^{-1}p_{\alpha} $})
\end{align*}
By comparing the items of both sides, we get
\begin{align*}
	&p_{\alpha}q_{\alpha}(x)\blacktriangleright_{\alpha,\,\beta\,\gamma}(p_{\beta}(y)\blacktriangleright_{\beta,\,\gamma}z)-(q_{\alpha}(x)\blacktriangleright_{\alpha,\,\beta}p_{\beta}(y))\blacktriangleright_{\alpha\,\beta,\,\gamma}q_{\gamma}(z)\\
	=&p_{\beta}q_{\beta}(y)\blacktriangleright_{\beta,\,\alpha\,\gamma}(p_{\alpha}(x)\blacktriangleright_{\alpha,\,\gamma}z)-(q_{\beta}(y)\blacktriangleright_{\beta,\,\alpha}p_{\alpha}(x))\blacktriangleright_{\beta\,\alpha,\,\gamma}q_{\gamma}(z).
\end{align*}
	
\end{proof}

In classical cases, any associative algebra is also a pre-Lie algebra. We generlize this result to BiHom-$\Omega$ version.

\begin{prop}\label{asso-preLie}
If $ (A,\bullet_{\alpha,\,\beta},p_{\alpha},q_{\alpha})_{\alpha,\,\beta\in \Omega} $ is a BiHom-$\Omega$-assocative algebra, then $ (A,\bullet_{\alpha,\,\beta},p_{\alpha},q_{\alpha})_{\alpha,\,\beta\in \Omega} $ is a BiHom-$\Omega$-pre-Lie algebra.
\end{prop}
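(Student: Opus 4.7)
The plan is to show that both sides of the BiHom-$\Omega$-pre-Lie defining identity, Eq.~(\ref{BHO-prelie-alg}), vanish identically when $\blacktriangleright_{\alpha,\beta}$ is taken to be $\bullet_{\alpha,\beta}$, so that the identity is trivially satisfied. The key observation is that each of the two ``associator-like'' differences on the left- and right-hand sides of Eq.~(\ref{BHO-prelie-alg}) is a direct instance of the BiHom-$\Omega$-associativity relation~(\ref{BiH-Omega-assoity}), after regrouping a $q_\alpha$ inside $p_\alpha q_\alpha(x)$ (respectively a $q_\beta$ inside $p_\beta q_\beta(y)$).

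More precisely, I would first rewrite the first term on the left of Eq.~(\ref{BHO-prelie-alg}) by setting $x' := q_\alpha(x)$ and observing $p_\alpha q_\alpha(x) = p_\alpha(x')$. Then BiHom-$\Omega$-associativity~(\ref{BiH-Omega-assoity}), applied with $x'$, $p_\beta(y)$, $z$ in place of $x$, $y$, $z$, gives
\[
p_\alpha q_\alpha(x) \bullet_{\alpha,\beta\gamma} (p_\beta(y)\bullet_{\beta,\gamma} z) \;=\; (q_\alpha(x)\bullet_{\alpha,\beta} p_\beta(y))\bullet_{\alpha\beta,\gamma} q_\gamma(z),
\]
which is exactly the second term on the left of Eq.~(\ref{BHO-prelie-alg}); hence the left-hand side is $0$. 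An entirely symmetric computation, this time setting $y' := q_\beta(y)$ and using $p_\beta q_\beta(y) = p_\beta(y')$, shows that the right-hand side of Eq.~(\ref{BHO-prelie-alg}) equals $0$ as well, and therefore the required equality $0 = 0$ holds.

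I would also briefly remark that the multiplicativity conditions~(\ref{multiplicativity}) and the commutation $p_\alpha\circ q_\alpha = q_\alpha\circ p_\alpha$ are inherited unchanged from the BiHom-$\Omega$-associative structure, so the structure maps of the candidate BiHom-$\Omega$-pre-Lie algebra are automatically two commuting $\Omega$-pre-Lie algebra morphisms with respect to $\bullet_{\alpha,\beta}$. I do not anticipate any real obstacle: the whole argument is a two-line application of Eq.~(\ref{BiH-Omega-assoity}) in each direction, and the only care required is bookkeeping with the subscripts from $\Omega$ and the twist by $q_\alpha$ (respectively $q_\beta$) hidden inside $p_\alpha q_\alpha(x)$ (respectively $p_\beta q_\beta(y)$).
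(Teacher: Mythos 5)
Your proposal is correct and is essentially identical to the paper's own proof: both substitute $q_\alpha(x)$ for $x$ and $p_\beta(y)$ for $y$ in the BiHom-$\Omega$-associativity relation~(\ref{BiH-Omega-assoity}) to make each side of Eq.~(\ref{BHO-prelie-alg}) vanish, and note that the remaining structure-map conditions are inherited directly.
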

\begin{proof}
	For any $x,y,z\in A,\,\alpha,\,\beta,\,\gamma\in \Omega,$ we only need to prove Eq.~(\ref{BHO-prelie-alg}). By BiHom-$\Omega$-associativity, we have
	\[p_{\alpha}(x)\bullet_{\alpha,\,\beta\,\gamma}(y\bullet_{\beta,\,\gamma}z)=(x\bullet_{\alpha,\,\beta}y)\bullet_{\alpha\,\beta,\,\gamma}q_{\gamma}(z),\]
	by replacing $x$ with $q_{\alpha}(x)$ and $y$ with $ p_{\beta}(y) $, we get
	\[p_{\alpha}q_{\alpha}(x)\bullet_{\alpha,\,\beta\,\gamma}(p_{\beta}(y)\bullet_{\beta,\,\gamma}z)-(q_{\alpha}(x)\bullet_{\alpha,\,\beta}p_{\beta}(y))\bullet_{\alpha\,\beta,\,\gamma}q_{\gamma}(z)=0.\]
	
\noindent	Similarly, we get
	\[p_{\beta}q_{\beta}(y)\bullet_{\beta,\,\alpha\,\gamma}(p_{\alpha}(x)\bullet_{\alpha,\,\gamma}z)-(q_{\beta}(y)\bullet_{\beta,\,\alpha}p_{\alpha}(x))\bullet_{\beta\,\alpha,\,\gamma}q_{\gamma}(z)=0.\]
	Thus, we get Eq.~(\ref{BHO-prelie-alg}). This completes the proof.
\end{proof}

\subsection{BiHom-$\Omega$-Lie algebras}
In this subsection, we first introduce the concept of BiHom-$\Omega$-Lie algebras, then we prove that a new BiHom-$\Omega$-Lie algebra can be induced by a Rota-Baxter family of weight $\lambda$ on the BiHom-$\Omega$-Lie algebra. Finally, we generalize the classical relationships of associative algebras, pre-Lie algebras and Lie algebras to the BiHom-$\Omega$ version.

\begin{defn}\cite{Aguiar}
	An $\bf{\Omega}$-$\mathbf{Lie \, algebra}$ $ (L,[\cdot,\cdot]_{\alpha,\,\beta})_{\alpha,\,\beta\in \Omega} $ is a vector space $L$ equipped with a family of binary operations $ ([\cdot,\cdot]_{\alpha,\,\beta}: L \times L\rightarrow L)_{\alpha,\,\beta\in \Omega} $ such that
	\begin{align}
		&[x,y]_{\alpha,\,\beta}=-[y,x]_{\beta,\,\alpha},\label{Omega-Lie-skew}\\
		[x,[y,z]_{\beta,\,\gamma}]_{\alpha,\,\beta\,\gamma}&+[y,[z,x]_{\gamma,\,\alpha}]_{\beta,\,\gamma\,\alpha}+[z,[x,y]_{\alpha,\,\beta}]_{\gamma,\,\alpha\,\beta}=0,\label{Omega-Lie-Jacobi}
	\end{align}
	for all $ x,y,z\in L,\,\alpha,\,\beta,\,\gamma\in \Omega $.
\end{defn}
\begin{defn}
	Let $ (L,[\cdot,\cdot]_{\alpha,\,\beta})_{\alpha,\,\beta\in \Omega} $ and $ (L',[\cdot,\cdot]'_{\alpha,\,\beta})_{\alpha,\beta\in \Omega} $ be two $\Omega$-Lie algebras. A family of linear maps $ (f_{\alpha})_{\alpha\in \Omega}: L \rightarrow L' $ is called an $\mathbf{\Omega}$-$\mathbf{Lie \, algebra \, morphism}$ if 	
	\begin{align}\label{Omega-Lie-morphism}
		f_{\alpha\,\beta}([x, y]_{\alpha,\,\beta})=[f_{\alpha}(x), f_{\beta}(y)]'_{\alpha,\,\beta},
	\end{align}
	for all $x, y\in L ,\,\alpha,\,\beta\in \Omega$.
\end{defn}

Now, we generlize the above definitions of the $\Omega$-Lie algebra to BiHom version.

\begin{defn}\label{Def-BiHomLie}
	A $\mathbf{BiHom}$-$\mathbf{\Omega}$-$\mathbf{Lie \, algebra}$ $( L,
	\left\lbrace\cdot , \cdot \right\rbrace_{\alpha,\,\beta} ,p_{\alpha} ,q_{\alpha} )_{\alpha,\,\beta\in \Omega} $ is a vector space $L$ equipped with a family of bilinear maps $ (\lbrace\cdot , \cdot \rbrace_{\alpha,\,\beta}
	:L\times L\rightarrow L)_{\alpha,\,\beta\in \Omega}$ and two commuting $\Omega$-Lie algebra morphisms $p_{\alpha},q_{\alpha} :L\rightarrow L$
	such that
	\begin{align}\label{BHO-skew}
		\lbrace q_{\alpha}(x),p_{\beta}(y)\rbrace_{\alpha,\,\beta}=-\lbrace q_{\beta}(y),p_{\alpha}(x)\rbrace_{\beta,\,\alpha},
		\quad \text{ (BiHom-$\Omega$-skew-symmetry)}
	\end{align}
		\begin{align*}
		\lbrace q_{\alpha}^{2}(x) ,\lbrace q_{\beta}(y)
		,p_{\gamma}(z)\rbrace_{\beta,\,\gamma} \rbrace_{\alpha,\,\beta\gamma} +\lbrace q_{\beta}^{2}(y) ,\lbrace q_{\gamma}(z)
		,p_{\alpha}(x) \rbrace_{\gamma,\alpha} \rbrace_{\beta,\,\gamma\alpha} +\lbrace q_{\gamma}^{2}(z) ,\lbrace q_{\alpha}(x) ,p_{\beta}(y
		) \rbrace_{\alpha,\,\beta} \rbrace_{\gamma,\alpha\beta} =0,
	\end{align*}
\begin{align}\label{BHO-Jacobi}
	\hspace{7cm}\text{(BiHom-$\Omega$-Jacobi condition)}
\end{align}
	for all $x, y, z\in L,\,\alpha,\,\beta,\,\gamma\in \Omega$.	The maps $p_{\alpha} $ and $q_{\alpha} $ (in this order) are called the structure maps
	of $L$.
\end{defn}
\begin{defn}
	Let	$ ( L, \lbrace\cdot , \cdot \rbrace_{\alpha,\,\beta} ,p_{\alpha} ,q_{\alpha} )_{\alpha,\,\beta\in \Omega}$
	and
	$( L^{\prime},\lbrace\cdot , \cdot \rbrace_{\alpha,\,\beta}^{\prime },p ^{\prime}_{\alpha},q
	^{\prime}_{\alpha})_{\alpha,\,\beta\in \Omega}$ be two BiHom-$\Omega$-Lie algebras. A family of linear maps $ (f_{\alpha})_{\alpha\in \Omega}:L \rightarrow L' $ is called a $\mathbf{BiHom}$-$\mathbf{\Omega}$-$\mathbf{Lie \, algebra \, morphism}$ if $f_{\alpha}$ is an $\Omega$-Lie algebra morphism and
	\[p_{\alpha}^{\prime }\circ f_{\alpha}=f_{\alpha}\circ p_{\alpha} ,\quad q_{\alpha}^{\prime }\circ f_{\alpha}=f_{\alpha}\circ q_{\alpha},\quad \text{for all } \alpha\in \Omega.\]
\end{defn}

\noindent Similar to~\cite[Proposition 3.16]{gmmp}, here we get the similar property of $\Omega$-Lie algebras as follows.

\begin{prop}\label{Ome-Lie-Yautwist}
	Let $ (L,[\cdot , \cdot ]_{\alpha,\,\beta} )_{\alpha,\,\beta\in \Omega} $ be an $\Omega$-Lie algebra and let $p_{\alpha} ,q_{\alpha} :L\rightarrow L$ be two families of commuting $\Omega$-Lie algebra morphisms. We define the bilinear maps $ \left\{\cdot , \cdot \right\} _{\alpha,\,\beta}:L\times L\rightarrow L $ by
	\[\left\{ x, y\right\}_{\alpha,\,\beta} :=[ p_{\alpha}(x),q_{\beta}(y)]_{\alpha,\,\beta},  \quad\text{ for all }x,y\in L, \,\alpha,\,\beta\in \Omega.\]
	Then $(L, \left\{\cdot , \cdot \right\}_{\alpha,\,\beta}, p_{\alpha} ,
	q_{\alpha} )_{\alpha,\,\beta\in \Omega}$ is a BiHom-$\Omega$-Lie algebra, called the Yau twist of $(L,[\cdot , \cdot ]_{\alpha,\,\beta} )_{\alpha,\,\beta\in \Omega} $.
\end{prop}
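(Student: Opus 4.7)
The plan is to verify the three requirements of Definition~\ref{Def-BiHomLie} for the new bracket $\{\cdot,\cdot\}_{\alpha,\beta}$: (i) the maps $p_\alpha$ and $q_\alpha$ commute and are morphisms of the new structure; (ii) the BiHom-$\Omega$-skew-symmetry \eqref{BHO-skew}; and (iii) the BiHom-$\Omega$-Jacobi identity \eqref{BHO-Jacobi}. All three will follow from the hypothesis that $p_\alpha, q_\alpha$ are commuting $\Omega$-Lie algebra morphisms, combined with the original $\Omega$-Lie identities \eqref{Omega-Lie-skew}--\eqref{Omega-Lie-morphism}. This parallels the Yau twist calculations already carried out for the associative, dendriform and pre-Lie cases (Proposition~\ref{asso-Yautwist} and~\ref{Yautwist-preLie}).

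For (i), commutativity $p_\alpha q_\alpha = q_\alpha p_\alpha$ is given. To see that $p_\alpha$ is a morphism for $\{\cdot,\cdot\}$, I would compute
\[
 p_{\alpha\beta}\{x,y\}_{\alpha,\beta}
 = p_{\alpha\beta}[p_\alpha(x),q_\beta(y)]_{\alpha,\beta}
 = [p_\alpha^2(x),\, p_\beta q_\beta(y)]_{\alpha,\beta}
 = [p_\alpha^2(x),\, q_\beta p_\beta(y)]_{\alpha,\beta}
 = \{p_\alpha(x), p_\beta(y)\}_{\alpha,\beta},
\]
using \eqref{Omega-Lie-morphism} and the commutativity of $p_\beta$ and $q_\beta$; the argument for $q_\alpha$ is identical. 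For (ii), unfold the definition to obtain $\{q_\alpha(x),p_\beta(y)\}_{\alpha,\beta} = [p_\alpha q_\alpha(x), q_\beta p_\beta(y)]_{\alpha,\beta}$, swap $p_\alpha q_\alpha=q_\alpha p_\alpha$ and $q_\beta p_\beta=p_\beta q_\beta$, apply \eqref{Omega-Lie-skew} to the $\Omega$-bracket, then fold back; the result is $-\{q_\beta(y),p_\alpha(x)\}_{\beta,\alpha}$.

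For (iii), my strategy is to expand each of the three cyclic terms of \eqref{BHO-Jacobi} by first unfolding the inner bracket as $\{q_\beta(y),p_\gamma(z)\}_{\beta,\gamma}=[p_\beta q_\beta(y),q_\gamma p_\gamma(z)]_{\beta,\gamma}$, and then unfolding the outer bracket, using that $q_{\beta\gamma}$ is an $\Omega$-Lie morphism to pull it inside. Repeatedly invoking the commutativity of $p_\alpha$ with $q_\alpha$ (and the fact that $\Omega$ is commutative, so the index subscripts on morphisms match as needed), I plan to rewrite each of the three summands in the form
\[
 [\,p_\alpha q_\alpha^2(x),\, [p_\beta q_\beta^2(y), p_\gamma q_\gamma^2(z)]_{\beta,\gamma}\,]_{\alpha,\beta\gamma}
\]
and its cyclic permutations in $(x,y,z)$ and $(\alpha,\beta,\gamma)$. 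The sum of these three equals the ordinary $\Omega$-Lie Jacobi identity \eqref{Omega-Lie-Jacobi} applied to the triple $(q_\alpha^2 p_\alpha(x), q_\beta^2 p_\beta(y), q_\gamma^2 p_\gamma(z))$, hence vanishes.

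The main obstacle is purely bookkeeping: keeping track of which power of $p$ and which power of $q$ sits on each entry after the two unfoldings, and verifying that the commutations $p_\alpha q_\alpha = q_\alpha p_\alpha$ are enough to bring each of the three terms into a common canonical form so that the original Jacobi identity applies. No new identity beyond commutativity and the $\Omega$-Lie axioms should be required, and the commutativity of $\Omega$ is used implicitly to identify subscripts such as $\alpha\beta\gamma=\beta\gamma\alpha=\gamma\alpha\beta$. Once the three terms are aligned, the proof closes immediately by citing \eqref{Omega-Lie-Jacobi}.
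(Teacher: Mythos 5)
Your proposal is correct and follows essentially the same route as the paper's proof: the morphism property and skew-symmetry are verified exactly as you describe, and the Jacobi condition is checked by unfolding the inner and outer brackets, pulling $q_{\beta\gamma}$ (and its cyclic analogues) inside via the morphism property, commuting $p$ and $q$ to bring each summand to the canonical form $[p_\alpha q_\alpha^2(x),[p_\beta q_\beta^2(y),p_\gamma q_\gamma^2(z)]_{\beta,\gamma}]_{\alpha,\beta\gamma}$ plus cyclic permutations, and concluding by the original $\Omega$-Lie Jacobi identity. No gaps.
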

\begin{proof}
	For $ x,y,z\in L,\,\alpha,\,\beta,\,\gamma\in \Omega,$ owing to the commutativity, we get $ p_{\alpha}\circ q_{\alpha}=q_{\alpha} \circ p_{\alpha}.$ First, we prove $p_{\alpha},q_{\alpha}$ are the $\Omega$-Lie algebra endomorphisms on $ (L,\lbrace \cdot,\cdot\rbrace_{\alpha,\,\beta})_{\alpha,\,\beta\in \Omega} $, we have
	\begin{align*}
		p_{\alpha\,\beta}( \lbrace x, y \rbrace _{\alpha,\,\beta} )=&p_{\alpha\,\beta}([p_{\alpha}(x),q_{\beta}(y)]_{\alpha,\,\beta})\\
		=&[p_{\alpha}^{2}(x),p_{\beta}q_{\beta}(y)]_{\alpha,\,\beta}\hspace{1cm}\text{(by $p_{\alpha\,\beta}$ satisfying Eq.~(\ref{Omega-Lie-morphism})})\\
		=&[p_{\alpha}^{2}(x),q_{\beta}p_{\beta}(y)]_{\alpha,\,\beta}\hspace{1cm}\text{(by $p_{\beta}\circ q_{\beta}=q_{\beta}\circ p_{\beta}$ })\\
		=&\lbrace p_{\alpha}(x), p_{\beta}(y) \rbrace _{\alpha,\,\beta}.
	\end{align*}

\noindent Similarly, we get
$ q_{\alpha\,\beta}(\lbrace x, y \rbrace _{\alpha,\,\beta} )=\lbrace q_{\alpha}(x), q_{\beta}(y) \rbrace _{\alpha,\,\beta}.$ Next, we prove the BiHom-$\Omega$-skew-symmetry, we have 
	\begin{align*}
		\lbrace q_{\alpha}(x), p_{\beta}(y) \rbrace_{\alpha,\,\beta}=&[p_{\alpha}q_{\alpha}(x), q_{\beta}p_{\beta}(y)]_{\alpha,\,\beta}\\
		=&-[q_{\beta}p_{\beta}(y),p_{\alpha}q_{\alpha}(x)]_{\beta,\,\alpha}\hspace{1cm}( \text{by Eq.~(\ref{Omega-Lie-skew})} )\\
		=&-[p_{\beta}q_{\beta}(y), q_{\alpha}p_{\alpha}(x)]_{\beta,\,\alpha}\hspace{1cm}( \text{by $ p_{\alpha}\circ q_{\alpha}=q_{\alpha} \circ p_{\alpha} $})\\
		=&-\lbrace q_{\beta}(y), p_{\alpha}(x) \rbrace _{\beta ,\,\alpha}.
	\end{align*}
Finally, we prove the BiHom-$\Omega$-Jcaobi condition, we have
	\begin{align*}
		\lbrace& q_{\alpha}^{2}(x), \lbrace q_{\beta}(y), p_{\gamma}(z)\rbrace _{\beta,\,\gamma} \rbrace_{\alpha,\,\beta\gamma}+\lbrace q_{\beta}^{2}(y), \lbrace q_{\gamma}(z), p_{\alpha}(x) \rbrace_{\gamma,\,\alpha} \rbrace_{\beta,\,\gamma\,\alpha}+\lbrace q_{\gamma}^{2}(z), \lbrace q_{\alpha}(x), p_{\beta}(y) \rbrace_{\alpha,\,\beta} \rbrace_{\gamma,\,\alpha\,\beta}\\
		=&\lbrace q_{\alpha}^{2}(x),[p_{\beta}q_{\beta}(y),q_{\gamma}p_{\gamma}(z)]_{\beta,\,\gamma}\rbrace_{\alpha,\,\beta\,\gamma}+\lbrace q_{\beta}^{2}(y),[p_{\gamma}q_{\gamma}(z),q_{\alpha}p_{\alpha}(x)]_{\gamma,\,\alpha}\rbrace_{\beta,\,\gamma\,\alpha}\\
		&+\lbrace q_{\gamma}^{2}(z),[p_{\alpha}q_{\alpha}(x),q_{\beta}p_{\beta}(y)]_{\alpha,\,\beta}\rbrace_{\gamma,\,\alpha\,\beta}\\
		=&\lbrace q_{\alpha}^{2}(x), [p_{\beta}q_{\beta}(y), p_{\gamma}q_{\gamma}(z)]_{\beta,\,\gamma}\rbrace_{\alpha,\,\beta\,\gamma}+\lbrace q_{\beta}^{2}(y), [p_{\gamma}q_{\gamma}(z), p_{\alpha}q_{\alpha}(x)]_{\gamma,\,\alpha}\rbrace_{\beta,\,\gamma\,\alpha}\\
		&+\lbrace q_{\gamma}^{2}(z), [p_{\alpha}q_{\alpha}(x), p_{\beta}q_{\beta}(y)]_{\alpha,\,\beta}\rbrace_{\gamma,\,\alpha\,\beta}\hspace{1cm}\text{(by $ p_{\alpha}\circ q_{\alpha}=q_{\alpha}\circ p_{\alpha} $)}\\
		=&[p_{\alpha}q_{\alpha}^{2}(x),q_{\beta\,\gamma}([p_{\beta}q_{\beta}(y),p_{\gamma}q_{\gamma}(z)]_{\beta,\,\gamma})]_{\alpha,\,\beta\,\gamma}+[p_{\beta}q_{\beta}^{2}(y),q_{\gamma\,\alpha}([p_{\gamma}q_{\gamma}(z),p_{\alpha}q_{\alpha}(x)]_{\gamma,\,\alpha})]_{\beta,\,\gamma\,\alpha}\\
		&+[p_{\gamma}q_{\gamma}^{2}(z),q_{\alpha\,\beta}([p_{\alpha}q_{\alpha}(x),q_{\beta}p_{\beta}(y)]_{\alpha,\,\beta})]_{\gamma,\,\alpha\,\beta}\\
		=&[p_{\alpha}q_{\alpha}^{2}(x), [p_{\beta}q_{\beta}^{2}(y), p_{\gamma}q_{\gamma}^{2}(z)]_{\beta,\,\gamma}]_{\alpha,\,\beta\,\gamma}+[p_{\beta}q_{\beta}^{2}(y), [p_{\gamma}q_{\gamma}^{2}(z), p_{\alpha}q_{\alpha}^{2}(x)]_{\gamma,\,\alpha}]_{\beta,\,\gamma\,\alpha}\\
		&+[p_{\gamma}q_{\gamma}^{2}(z), [p_{\alpha}q_{\alpha}^{2}(x), p_{\beta}q_{\beta}^{2}(y)]_{\alpha,\,\beta}]_{\gamma,\,\alpha\,\beta}\hspace{0.5cm}\text{(by $q_{\beta\,\gamma},q_{\gamma\,\alpha},q_{\alpha\,\beta}$ satisfying Eq.~(\ref{Omega-Lie-morphism})})\\
		=&0. \hspace{2cm} \text{(by Eq.~(\ref{Omega-Lie-Jacobi}))}
	\end{align*}
\end{proof}

Inspired by~\cite[Claim 3.17]{bihomprelie}, we have the following result and the proof is similar to Proposition~\ref{Ome-Lie-Yautwist}.

\begin{prop}
	Let $(L,\lbrace \cdot , \cdot \rbrace _{\alpha,\,\beta} ,p_{\alpha} ,q_{\alpha} )_{\alpha,\,\beta\in \Omega} $ be a
	BiHom-$\Omega$-Lie algebra. If $p_{\alpha}^{\prime } ,\, q_{\alpha}^{\prime }:L\rightarrow L$ are two BiHom-$\Omega$-Lie algebra morphisms and the maps $p_{\alpha}, q_{\alpha} , p_{\alpha}^{\prime }, q_{\alpha}^{\prime }$ commute with each other. Define the bilinear maps $ \langle\cdot,\cdot\rangle_{\alpha,\,\beta}: L \times L \rightarrow L $ by
	\[ \langle x,y\rangle_{\alpha,\,\beta}:=\lbrace p_{\alpha}'(x),q_{\beta}'(y)\rbrace _{\alpha,\,\beta},\]
 for all $ x,y\in L,\,\alpha,\,\beta\in \Omega,$ then $(L,\langle \cdot,\cdot \rangle_{\alpha,\,\beta},
	p_{\alpha}\circ p_{\alpha}^{\prime },q_{\alpha}\circ q_{\alpha}^{\prime })_{\alpha,\,\beta \in \Omega} $ is a
	BiHom-$\Omega$-Lie algebra.
\end{prop}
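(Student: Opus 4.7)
The plan is to verify, in order, the four requirements that $(L, \langle\cdot,\cdot\rangle_{\alpha,\beta}, p_\alpha\circ p'_\alpha, q_\alpha\circ q'_\alpha)_{\alpha,\beta\in\Omega}$ must satisfy to be a BiHom-$\Omega$-Lie algebra: commutativity of the two structure maps, the $\Omega$-Lie morphism property, BiHom-$\Omega$-skew-symmetry, and the BiHom-$\Omega$-Jacobi condition. The commutativity $(p_\alpha p'_\alpha)(q_\alpha q'_\alpha)=(q_\alpha q'_\alpha)(p_\alpha p'_\alpha)$ is immediate from the pairwise commutation of $p_\alpha,q_\alpha,p'_\alpha,q'_\alpha$.

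Next I would verify that $p_\alpha\circ p'_\alpha$ (and analogously $q_\alpha\circ q'_\alpha$) is an $\Omega$-Lie algebra morphism for the new bracket. The key identity to unpack is
\[
(p_{\alpha\beta}\circ p'_{\alpha\beta})\bigl(\langle x,y\rangle_{\alpha,\beta}\bigr)
=(p_{\alpha\beta}\circ p'_{\alpha\beta})\bigl(\{p'_\alpha(x),q'_\beta(y)\}_{\alpha,\beta}\bigr).
\]
Applying Eq.~\eqref{Omega-Lie-morphism} twice (first for $p'_{\alpha\beta}$, which is an $\Omega$-Lie morphism since $p'$ is a BiHom-$\Omega$-Lie algebra morphism, then for $p_{\alpha\beta}$), and then using the pairwise commutativity of $p_\alpha,p'_\alpha,q'_\alpha$ and $p_\beta,p'_\beta,q'_\beta$ to reshuffle the compositions inside the bracket, I arrive at $\langle (p_\alpha p'_\alpha)(x),(p_\beta p'_\beta)(y)\rangle_{\alpha,\beta}$. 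The treatment for $q_\alpha\circ q'_\alpha$ is strictly parallel.

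For BiHom-$\Omega$-skew-symmetry I would expand
\[
\langle (q_\alpha q'_\alpha)(x),(p_\beta p'_\beta)(y)\rangle_{\alpha,\beta}
=\{p'_\alpha q_\alpha q'_\alpha(x),\,q'_\beta p_\beta p'_\beta(y)\}_{\alpha,\beta},
\]
use commutativity of the four families of maps to rewrite this as $\{q_\alpha(p'_\alpha q'_\alpha(x)),p_\beta(q'_\beta p'_\beta(y))\}_{\alpha,\beta}$, and then apply Eq.~\eqref{BHO-skew} of the original bracket with $x\mapsto p'_\alpha q'_\alpha(x)$ and $y\mapsto q'_\beta p'_\beta(y)$; one final application of commutativity matches the result to $-\langle (q_\beta q'_\beta)(y),(p_\alpha p'_\alpha)(x)\rangle_{\beta,\alpha}$.

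The BiHom-$\Omega$-Jacobi condition is the heaviest step and is the main obstacle because one has to track how $(p_\alpha p'_\alpha)^2$ and $(q_\alpha q'_\alpha)^2$ distribute inside nested brackets. I would expand one of the three cyclic terms, say
\[
\bigl\langle (q_\alpha q'_\alpha)^{2}(x),\,\langle (q_\beta q'_\beta)(y),(p_\gamma p'_\gamma)(z)\rangle_{\beta,\gamma}\bigr\rangle_{\alpha,\beta\gamma},
\]
unfold the inner $\langle\,,\,\rangle$ into $\{\,,\,\}$ using the definition, pull every $p'$ and $q'$ to the outside of the expression using commutativity (so that the resulting expression becomes $\{q_\alpha^2(X),\{q_\beta(Y),p_\gamma(Z)\}_{\beta,\gamma}\}_{\alpha,\beta\gamma}$ for appropriate $X,Y,Z$ built out of powers of $p',q'$), apply Eq.~\eqref{BHO-Jacobi} of the original bracket, and then cyclically permute to combine with the other two terms. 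The arguments $X,Y,Z$ will be the same across the three cyclic terms, so the original Jacobi identity forces the sum to vanish, completing the proof.
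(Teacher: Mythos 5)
Your proposal is correct and follows essentially the same route the paper intends: the paper omits the details, stating only that the proof is analogous to its Proposition on the Yau twist of an $\Omega$-Lie algebra, and your verification (morphism property, skew-symmetry, Jacobi, each by unfolding the definition, commuting the maps, and invoking the corresponding axiom of the original bracket) is exactly that adaptation. Your key observation for the Jacobi step does check out: all three cyclic terms reduce to the original BiHom-$\Omega$-Jacobi identity evaluated at the common arguments $X=p'_{\alpha}(q'_{\alpha})^{2}(x)$, $Y=p'_{\beta}(q'_{\beta})^{2}(y)$, $Z=p'_{\gamma}(q'_{\gamma})^{2}(z)$.
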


The following result precises the link between BiHom-$\Omega$-pre-Lie algebras and BiHom-$\Omega$-Lie algebras.

\begin{prop}\label{pLf-to-OmegaLie}
	Let $(A, \blacktriangleright_{\alpha,\,\beta} , p_{\alpha} , q_{\alpha} )_{\alpha,\,\beta \in \Omega}$ be a BiHom-$\Omega$-pre-Lie algebra. If $p_{\alpha},\,q_{\alpha} $ are bijective and we define $\lbrace \cdot ,\cdot  \rbrace _{\alpha,\,\beta}:A\times A\rightarrow A$ by \[\lbrace x, y\rbrace _{\alpha,\,\beta}:=x\blacktriangleright_{\alpha,\,\beta} y-(p_{\beta}^{-1}q_{\beta}(y))\blacktriangleright_{\beta,\,\alpha} (p_{\alpha} q_{\alpha}^{-1}(x)),\]
	for all $x, y\in A,\,\alpha,\,\beta\in\Omega$, then $(A, \lbrace \cdot ,\cdot  \rbrace _{\alpha,\,\beta} , p_{\alpha} , q_{\alpha} )_{\alpha,\,\beta\in\Omega}$ is a BiHom-$\Omega$-Lie algebra.
\end{prop}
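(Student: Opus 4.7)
The plan is to verify the three requirements of Definition~\ref{Def-BiHomLie} for the bracket $\{\cdot,\cdot\}_{\alpha,\beta}$: (i) $p_\alpha$ and $q_\alpha$ are $\Omega$-Lie algebra morphisms with respect to the new bracket, (ii) BiHom-$\Omega$-skew-symmetry (Eq.~(\ref{BHO-skew})), and (iii) the BiHom-$\Omega$-Jacobi condition (Eq.~(\ref{BHO-Jacobi})). Throughout, I will freely use the commutativity of $p_\alpha, q_\alpha$ (and hence of their inverses), the multiplicativity $p_{\alpha\beta}(x\blacktriangleright_{\alpha,\beta}y)=p_\alpha(x)\blacktriangleright_{\alpha,\beta}p_\beta(y)$ (similarly for $q$), and the defining identity~(\ref{BHO-prelie-alg}).

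First I would dispatch multiplicativity. Since $p_{\alpha\beta}$ commutes with $\blacktriangleright$ and with both $p_\beta^{-1}q_\beta$ and $p_\alpha q_\alpha^{-1}$ (these being built from families that all commute), a direct expansion gives $p_{\alpha\beta}(\{x,y\}_{\alpha,\beta})=\{p_\alpha(x),p_\beta(y)\}_{\alpha,\beta}$, and likewise for $q$. Next I would verify BiHom-$\Omega$-skew-symmetry: substituting $q_\alpha(x)$ for $x$ and $p_\beta(y)$ for $y$ in the definition, the factor $p_\beta^{-1}q_\beta(p_\beta(y))$ collapses to $q_\beta(y)$ and $p_\alpha q_\alpha^{-1}(q_\alpha(x))$ collapses to $p_\alpha(x)$, so
\[
\{q_\alpha(x),p_\beta(y)\}_{\alpha,\beta}=q_\alpha(x)\blacktriangleright_{\alpha,\beta}p_\beta(y)-q_\beta(y)\blacktriangleright_{\beta,\alpha}p_\alpha(x),
\]
which is manifestly the negative of $\{q_\beta(y),p_\alpha(x)\}_{\beta,\alpha}$.

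The substantive step, and the main obstacle, is the BiHom-$\Omega$-Jacobi identity. Expanding each of the three terms
\[
\{q_\alpha^2(x),\{q_\beta(y),p_\gamma(z)\}_{\beta,\gamma}\}_{\alpha,\beta\gamma}+\text{cyc.}
\]
by the definition of $\{\cdot,\cdot\}$ and then rewriting the inner bracket yields, per term, four $\blacktriangleright$-monomials; after carefully moving the scalar-like operators $p^{\pm 1}$ and $q^{\pm 1}$ across the arguments (using multiplicativity together with the commutativity hypothesis), the resulting twelve monomials can be grouped into three quadruples, one for each cyclic rotation. Each quadruple should match, up to renaming of dummy variables, an instance of the BiHom-$\Omega$-pre-Lie identity~(\ref{BHO-prelie-alg}) applied to suitable arguments of the form $q_\alpha^2(x),p_\beta q_\beta^{-1}q_\beta^2(y)=\cdots$, etc.; those four terms then sum to zero. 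I would execute this bookkeeping by treating one cyclic slot in detail and invoking cyclic symmetry for the other two.

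The likely friction point is not ideological but combinatorial: making sure that after pushing $p_\beta^{-1}q_\beta$ and $p_\alpha q_\alpha^{-1}$ through $\blacktriangleright$ via multiplicativity, the ``weight'' of each argument (its net power of $p_\alpha q_\alpha$) lines up exactly with the weights $p_\alpha q_\alpha$ and $p_\beta q_\beta$ on $x,y$ demanded by~(\ref{BHO-prelie-alg}), and with $q_\gamma$ on $z$. The bijectivity of $p_\alpha, q_\alpha$ is used precisely here to rewrite arguments like $q_\alpha^2(x)$ as $p_\alpha q_\alpha \cdot (p_\alpha^{-1} q_\alpha)(x)$ so that~(\ref{BHO-prelie-alg}) becomes applicable with the appropriate substitution. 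Once the matching of weights is set up correctly, the cancellation is forced by one application of~(\ref{BHO-prelie-alg}) per cyclic orbit, and the Jacobi identity follows.
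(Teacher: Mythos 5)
Your proposal is correct and follows essentially the same route as the paper: the multiplicativity and skew-symmetry checks are identical, and the Jacobi condition is verified exactly as in the paper's proof by expanding into twelve $\blacktriangleright$-monomials and, after using bijectivity to rewrite arguments such as $q_{\alpha}^{2}(x)=p_{\alpha}q_{\alpha}\bigl(p_{\alpha}^{-1}q_{\alpha}(x)\bigr)$, regrouping them into three instances of Eq.~(\ref{BHO-prelie-alg}). The only caveat is that these three vanishing quadruples are transversal to the three cyclic summands of the Jacobi sum rather than coinciding with them, but since they are cyclic images of one another your plan of treating one in detail and invoking cyclic symmetry remains sound.
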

\begin{proof}
		For any	$x, y, z\in A, \;\alpha,\,\beta,\,\gamma\in \Omega$, we have
		\begin{align*}
		p_{\alpha\,\beta}(\lbrace x,y\rbrace _{\alpha,\,\beta})=&p_{\alpha\,\beta}(x\blacktriangleright_{\alpha,\,\beta}y-p_{\beta}^{-1}q_{\beta}(y)\blacktriangleright_{\beta,\,\alpha}p_{\alpha}q_{\alpha}^{-1}(x))\\
		=&p_{\alpha\,\beta}(x\blacktriangleright_{\alpha,\,\beta}y)-p_{\alpha\,\beta}(p_{\beta}^{-1}q_{\beta}(y)\blacktriangleright_{\beta,\,\alpha}p_{\alpha}q_{\alpha}^{-1}(x))\\
		=&p_{\alpha}(x)\blacktriangleright_{\alpha,\,\beta}p_{\beta}(y)-p_{\beta}p_{\beta}^{-1}q_{\beta}(y)\blacktriangleright_{\beta,\,\alpha}p_{\alpha}^{2}q_{\alpha}^{-1}(x)\hspace{1cm}\text{(by $p_{\alpha\,\beta}$ satisfying Eq.~(\ref{Omega-prelie-morphism}))}\\
		=&p_{\alpha}(x)\blacktriangleright_{\alpha,\,\beta}p_{\beta}(y)-p_{\beta}^{-1}q_{\beta}p_{\beta}(y)\blacktriangleright_{\beta,\,\alpha}p_{\alpha}q_{\alpha}^{-1}p_{\alpha}(x)\hspace{1cm}\text{(by $ p_{\alpha}\circ q_{\alpha}=q_{\alpha}\circ p_{\alpha}$)}\\
		=&\lbrace p_{\alpha}(x),p_{\beta}(y)\rbrace _{\alpha,\,\beta}.
	\end{align*}
	Similarly, we get $ q_{\alpha\,\beta}(\lbrace x,y\rbrace _{\alpha,\,\beta})=\lbrace q_{\alpha}(x),q_{\beta}(y)\rbrace _{\alpha,\,\beta}.$
	Now we prove the BiHom-$\Omega$-skew-symmetry, we have
	\begin{align*}
		\lbrace q_{\alpha}(x),p_{\beta}(y)\rbrace _{\alpha,\,\beta}=&q_{\alpha}(x)\blacktriangleright_{\alpha,\,\beta}p_{\beta}(y)-p_{\beta}^{-1}q_{\beta}p_{\beta}(y)\blacktriangleright_{\beta,\,\alpha}p_{\alpha}q_{\alpha}^{-1}q_{\alpha}(x)\\
		=&q_{\alpha}(x)\blacktriangleright_{\alpha,\,\beta}p_{\beta}(y)-q_{\beta}(y)\blacktriangleright_{\beta,\,\alpha}p_{\alpha}(x)\\
		=&-(q_{\beta}(y)\blacktriangleright_{\beta,\,\alpha}p_{\alpha}(x)-p_{\alpha}^{-1}q_{\alpha}p_{\alpha}(x)\blacktriangleright_{\alpha,\,\beta}p_{\beta}q_{\beta}^{-1}q_{\beta}(y))\\
		=&-\lbrace q_{\beta}(y),p_{\alpha}(x)\rbrace _{\beta,\,\alpha}.
	\end{align*}

\noindent Next, we are going to prove the BiHom-$\Omega$-Jacobi condition, we have
	\begin{align*}
		\lbrace &q_{\alpha}^2(x),\lbrace q_{\beta}(y),p_{\gamma}(z)\rbrace _{\beta,\,\gamma}\rbrace _{\alpha,\,\beta\gamma}+\lbrace q_{\beta}^2(y),\lbrace q_{\gamma}(z),p_{\alpha}(x)\rbrace _{\gamma,\alpha}\rbrace _{\beta,\,\gamma\alpha}+\lbrace q_{\gamma}^2(z),\lbrace q_{\alpha}(x),p_{\beta}(y)\rbrace _{\alpha,\,\beta}\rbrace _{\gamma,\alpha\beta}\\
		=&\lbrace q_{\alpha}^{2}(x),q_{\beta}(y)\blacktriangleright_{\beta,\,\gamma}p_{\gamma}(z)-p_{\gamma}^{-1}q_{\gamma}p_{\gamma}(z)\blacktriangleright_{\gamma,\,\beta}p_{\beta}q_{\beta}^{-1}q_{\beta}(y)\rbrace _{\alpha,\,\beta\,\gamma}\\
		&+\lbrace q_{\beta}^{2}(y),q_{\gamma}(z)\blacktriangleright_{\gamma,\,\alpha}p_{\alpha}(x)-p_{\alpha}^{-1}q_{\alpha}p_{\alpha}(x)\blacktriangleright_{\alpha,\,\gamma}p_{\gamma}q_{\gamma}^{-1}q_{\gamma}(z)\rbrace _{\beta,\,\gamma\,\alpha}\\
		&+\lbrace q_{\gamma}^{2}(z),q_{\alpha}(x)\blacktriangleright_{\alpha,\,\beta}p_{\beta}(y)-p_{\beta}^{-1}q_{\beta}p_{\beta}(y)\blacktriangleright_{\beta,\,\alpha}p_{\alpha}q_{\alpha}^{-1}q_{\alpha}(x)\rbrace _{\gamma,\,\alpha\,\beta}\\
		=&\lbrace q_{\alpha}^2(x),q_{\beta}(y)\blacktriangleright_{\beta,\,\gamma}p_{\gamma}(z)-q_{\gamma}(z)\blacktriangleright_{\gamma,\,\beta}p_{\beta}(y)\rbrace _{\alpha,\,\beta\gamma}+\lbrace q_{\beta}^2(y),q_{\gamma}(z)\blacktriangleright_{\gamma,\,\alpha}p_{\alpha}(x)-q_{\alpha}(x)\blacktriangleright_{\alpha,\,\gamma}p_{\gamma}(z)\rbrace _{\beta,\,\gamma\alpha}\\
		&+\lbrace q_{\gamma}^2(z),q_{\alpha}(x)\blacktriangleright_{\alpha,\,\beta}p_{\beta}(y)-q_{\beta}(y)\blacktriangleright_{\beta,\,\alpha}p_{\alpha}(x)\rbrace _{\gamma,\alpha\beta}\\
		=&q_{\alpha}^2(x)\blacktriangleright_{\alpha,\,\beta\,\gamma}(q_{\beta}(y)\blacktriangleright_{\beta,\,\gamma}p_{\gamma}(z)-q_{\gamma}(z)\blacktriangleright_{\gamma,\,\beta}p_{\beta}(y))-p_{\beta\,\gamma}^{-1}q_{\beta\,\gamma}(q_{\beta}(y)\blacktriangleright_{\beta,\,\gamma}p_{\gamma}(z)\\
		&-q_{\gamma}(z)\blacktriangleright_{\gamma,\,\beta}p_{\beta}(y))\blacktriangleright_{\beta\,\gamma,\,\alpha}p_{\alpha}q_{\alpha}^{-1}q_{\alpha}^{2}(x)+q_{\beta}^{2}(y)\blacktriangleright_{\beta,\,\gamma\,\alpha}(q_{\gamma}(z)\blacktriangleright_{\gamma,\,\alpha}p_{\alpha}(x)-q_{\alpha}(x)\blacktriangleright_{\alpha,\,\gamma}p_{\gamma}(z))\\
		&-p_{\gamma\,\alpha}^{-1}q_{\gamma\,\alpha}(q_{\gamma}(z)\blacktriangleright_{\gamma,\,\alpha}p_{\alpha}(x)-q_{\alpha}(x)\blacktriangleright_{\alpha,\,\gamma}p_{\gamma}(z))\blacktriangleright_{\gamma\alpha,\,\beta}p_{\beta}q_{\beta}^{-1}q_{\beta}^{2}(y)\\
		&+q_{\gamma}^{2}(z)\blacktriangleright_{\gamma,\,\alpha\,\beta}(q_{\alpha}(x)\blacktriangleright_{\alpha,\,\beta }p_{\beta}(y)-q_{\beta}(y)\blacktriangleright_{\beta,\,\alpha}p_{\alpha}(x))-p_{\alpha\,\beta}^{-1}q_{\alpha\,\beta}(q_{\alpha}(x)\blacktriangleright_{\alpha,\,\beta}p_{\beta}(y)\\
		&-q_{\beta}(y)\blacktriangleright_{\beta,\,\alpha}p_{\alpha}(x))\blacktriangleright_{\alpha\,\beta,\,\gamma}p_{\gamma}q_{\gamma}^{-1}q_{\gamma}^{2}(z)\\
		=&q_{\alpha}^{2}(x)\blacktriangleright_{\alpha,\,\beta\,\gamma}(q_{\beta}(y)\blacktriangleright_{\beta,\,\gamma}p_{\gamma}(z))-q_{\alpha}^{2}(x)\blacktriangleright_{\alpha,\,\gamma\,\beta}(q_{\gamma}(z)\blacktriangleright_{\gamma,\,\beta}p_{\beta}(y))\\
		&-(p_{\beta}^{-1}q_{\beta}^{2}(y)\blacktriangleright_{\beta,\,\gamma}q_{\gamma}(z))\blacktriangleright_{\beta\,\gamma,\,\alpha}p_{\alpha}q_{\alpha}(x)+(p_{\gamma}^{-1}q_{\gamma}^{2}(z)\blacktriangleright_{\gamma,\,\beta}q_{\beta}(y))\blacktriangleright_{\gamma\,\beta,\,\alpha}p_{\alpha}q_{\alpha}(x)\\
		&+q_{\beta}^{2}(y)\blacktriangleright_{\beta,\,\gamma\,\alpha}(q_{\gamma}(z)\blacktriangleright_{\gamma,\,\alpha}p_{\alpha}(x))-q_{\beta}^{2}(y)\blacktriangleright_{\beta,\,\alpha\,\gamma}(q_{\alpha}(x)\blacktriangleright_{\alpha,\,\gamma}p_{\gamma}(z))\\
		&-(p_{\gamma}^{-1}q_{\gamma}^{2}(z)\blacktriangleright_{\gamma,\,\alpha}q_{\alpha}(x))\blacktriangleright_{\gamma\,\alpha,\,\beta}p_{\beta}q_{\beta}(y)+(p_{\alpha}^{-1}q_{\alpha}^{2}(x)\blacktriangleright_{\alpha,\,\gamma}q_{\gamma}(z))\blacktriangleright_{\alpha\,\gamma,\,\beta}p_{\beta}q_{\beta}(y)\\
		&+q_{\gamma}^{2}(z)\blacktriangleright_{\gamma,\,\alpha\,\beta}(q_{\alpha}(x)\blacktriangleright_{\alpha,\,\beta}p_{\beta}(y))-q_{\gamma}^{2}(z)\blacktriangleright_{\gamma,\,\beta\,\alpha}(q_{\beta}(y)\blacktriangleright_{\beta,\,\alpha}p_{\alpha}(x))\\
		&-(p_{\alpha}^{-1}q_{\alpha}^{2}(x)\blacktriangleright_{\alpha,\,\beta }q_{\beta}(y))\blacktriangleright_{\alpha\,\beta,\,\gamma}p_{\gamma}q_{\gamma}(z)+(p_{\beta}^{-1}q_{\beta}^{2}(y)\blacktriangleright_{\beta,\,\alpha}q_{\alpha}(x))\blacktriangleright_{\beta\,\alpha,\,\gamma}p_{\gamma}q_{\gamma}(z)\\
		&\hspace{1.3cm}\text{(by $p_{\beta\,\gamma}^{-1},q_{\beta\,\gamma},p_{\gamma\,\alpha}^{-1},q_{\gamma\,\alpha},p_{\alpha\,\beta}^{-1},q_{\alpha\,\beta}$ satisfying Eq.~(\ref{Omega-prelie-morphism})})\\
		=&p_{\alpha}q_{\alpha}p_{\alpha}^{-1}q_{\alpha}(x)\blacktriangleright_{\alpha,\,\beta\,\gamma}(p_{\beta}p_{\beta}^{-1}q_{\beta}(y)\blacktriangleright_{\beta,\,\gamma}p_{\gamma}(z))-(q_{\alpha}p_{\alpha}^{-1}q_{\alpha}(x)\blacktriangleright_{\alpha,\,\beta}p_{\beta}p_{\beta}^{-1}q_{\beta}(y))\blacktriangleright_{\alpha\,\beta,\,\gamma}q_{\gamma}p_{\gamma}(z)\\
		&-p_{\beta}q_{\beta}p_{\beta}^{-1}q_{\beta}(y)\blacktriangleright_{\beta,\,\alpha\,\gamma}(p_{\alpha}p_{\alpha}^{-1}q_{\alpha}(x)\blacktriangleright_{\alpha,\,\gamma}p_{\gamma}(z))+(q_{\beta}p_{\beta}^{-1}q_{\beta}(y)\blacktriangleright_{\beta,\,\alpha}p_{\alpha}p_{\alpha}^{-1}q_{\alpha}(x))\blacktriangleright_{\beta\,\alpha,\,\gamma}q_{\gamma}p_{\gamma}(z)\\
		&-p_{\alpha}q_{\alpha}p_{\alpha}^{-1}q_{\alpha}(x)\blacktriangleright_{\alpha,\,\gamma\,\beta}(p_{\gamma}p_{\gamma}^{-1}q_{\gamma}(z)\blacktriangleright_{\gamma,\,\beta}p_{\beta}(y))+(q_{\alpha}p_{\alpha}^{-1}q_{\alpha}(x)\blacktriangleright_{\alpha,\,\gamma}p_{\gamma}p_{\gamma}^{-1}q_{\gamma}(z))\blacktriangleright_{\alpha\,\gamma,\,\beta}q_{\beta}p_{\beta}(y)\\
		&+p_{\gamma}q_{\gamma}p_{\gamma}^{-1}q_{\gamma}(z)\blacktriangleright_{\gamma,\,\alpha\,\beta}(p_{\alpha}p_{\alpha}^{-1}q_{\alpha}(x)\blacktriangleright_{\alpha,\,\beta}p_{\beta}(y))-(q_{\gamma}p_{\gamma}^{-1}q_{\gamma}(z)\blacktriangleright_{\gamma,\,\alpha}p_{\alpha}p_{\alpha}^{-1}q_{\alpha}(x))\blacktriangleright_{\gamma\,\alpha,\,\beta}q_{\beta}p_{\beta}(y)\\
		&+p_{\beta}q_{\beta}p_{\beta}^{-1}q_{\beta}(y)\blacktriangleright_{\beta,\,\gamma\,\alpha}(p_{\gamma}p_{\gamma}^{-1}q_{\gamma}(z)\blacktriangleright_{\gamma,\,\alpha}p_{\alpha}(x))-(q_{\beta}p_{\beta}^{-1}q_{\beta}(y)\blacktriangleright_{\beta,\,\gamma}p_{\gamma}p_{\gamma}^{-1}q_{\gamma}(z))\blacktriangleright_{\beta\,\gamma,\,\alpha}q_{\alpha}p_{\alpha}(x)\\
		&-p_{\gamma}q_{\gamma}p_{\gamma}^{-1}q_{\gamma}(z)\blacktriangleright_{\gamma,\,\beta\,\alpha}(p_{\beta}p_{\beta}^{-1}q_{\beta}(y)\blacktriangleright_{\beta,\,\alpha}p_{\alpha}(x))+(q_{\gamma}p_{\gamma}^{-1}q_{\gamma}(z)\blacktriangleright_{\gamma,\,\beta}p_{\beta}p_{\beta}^{-1}q_{\beta}(y))\blacktriangleright_{\gamma\,\beta,\,\alpha}q_{\alpha}p_{\alpha}(x)\\
		&\hspace{1.3cm}\text{(by $p_{\alpha},p_{\alpha}',q_{\alpha},q_{\alpha}'$ commuting with each other})\\
		=&0. \hspace{1cm}\text{(by Eq.~(\ref{BHO-prelie-alg}))}
	\end{align*}
	This completes the proof.
\end{proof}

Similar to associative algebras induce Lie algebras, now we generalize this classical result to
 BiHom-$\Omega$ version.

\begin{prop}\label{commutator}
	Let $ (A,\bullet_{\alpha,\,\beta},p_{\alpha},q_{\alpha})_{\alpha,\,\beta\in \Omega} $ be a BiHom-$\Omega$-associative algebra. If $ p_{\alpha}, q_{\alpha} $ are bijective and we define
	\begin{align*}
	\lbrace x,y\rbrace_{\alpha,\,\beta}:=x\bullet_{\alpha,\,\beta}y-p_{\beta}^{-1}q_{\beta}(y)\bullet_{\beta,\,\alpha}p_{\alpha}q_{\alpha}^{-1}(x),
  \end{align*}
	for all $ x,y\in A,\,\alpha,\,\beta\in \Omega, $ then $ (A,\lbrace \cdot,\cdot\rbrace _{\alpha,\,\beta},p_{\alpha},q_{\alpha})_{\alpha,\,\beta\in \Omega} $ is a BiHom-$\Omega$-Lie algebra.
\end{prop}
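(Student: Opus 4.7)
The proof should reduce to composing two propositions already established in the paper, with essentially no new computation required. The plan is to first invoke Proposition~\ref{asso-preLie}, which asserts that every BiHom-$\Omega$-associative algebra $(A,\bullet_{\alpha,\,\beta},p_{\alpha},q_{\alpha})_{\alpha,\,\beta\in\Omega}$ is automatically a BiHom-$\Omega$-pre-Lie algebra under the same operation $\blacktriangleright_{\alpha,\,\beta}:=\bullet_{\alpha,\,\beta}$ and the same structure maps. Crucially, Proposition~\ref{asso-preLie} is stated with no extra hypotheses beyond BiHom-$\Omega$-associativity, so no conditions on $p_{\alpha}, q_{\alpha}$ are lost at this step.

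Once $A$ is viewed as a BiHom-$\Omega$-pre-Lie algebra, the bijectivity of $p_{\alpha}$ and $q_{\alpha}$ places us in exactly the setting of Proposition~\ref{pLf-to-OmegaLie}. Its output bracket
\[
\lbrace x,y\rbrace_{\alpha,\,\beta} \;=\; x\blacktriangleright_{\alpha,\,\beta}y - \bigl(p_{\beta}^{-1}q_{\beta}(y)\bigr)\blacktriangleright_{\beta,\,\alpha}\bigl(p_{\alpha}q_{\alpha}^{-1}(x)\bigr)
\]
coincides verbatim with the bracket defined in the present statement once $\blacktriangleright_{\alpha,\,\beta}$ is replaced by $\bullet_{\alpha,\,\beta}$. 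Thus the triple $(A,\lbrace\cdot,\cdot\rbrace_{\alpha,\,\beta},p_{\alpha},q_{\alpha})_{\alpha,\,\beta\in\Omega}$ is a BiHom-$\Omega$-Lie algebra, which is exactly the claim. All three required properties, namely the compatibility of $p_{\alpha},q_{\alpha}$ with $\lbrace\cdot,\cdot\rbrace_{\alpha,\,\beta}$ (Eq.~(\ref{multiplicativity}) for Lie brackets), the BiHom-$\Omega$-skew-symmetry~(\ref{BHO-skew}), and the BiHom-$\Omega$-Jacobi identity~(\ref{BHO-Jacobi}), come for free from Proposition~\ref{pLf-to-OmegaLie}.

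There is essentially no obstacle to this route; the only point to be careful about is verifying that the two compositions are compatible, i.e., that the BiHom-$\Omega$-pre-Lie structure produced by Proposition~\ref{asso-preLie} is indeed the one fed into Proposition~\ref{pLf-to-OmegaLie}. This is immediate because both propositions use the same notation for the structure maps and neither modifies them. If instead one wanted a direct proof, the main difficulty would be the BiHom-$\Omega$-Jacobi condition: expanding the three cyclic terms $\lbrace q_{\alpha}^{2}(x),\lbrace q_{\beta}(y),p_{\gamma}(z)\rbrace_{\beta,\,\gamma}\rbrace_{\alpha,\,\beta\gamma}$ and its rotations, each yields four summands, giving twelve terms in total; reassociating each using the BiHom-$\Omega$-associativity identity (\ref{BiH-Omega-assoity}) and the commutation $p_{\alpha}\circ q_{\alpha}=q_{\alpha}\circ p_{\alpha}$ would show they cancel pairwise. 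The two-step argument above, however, makes this direct computation unnecessary.
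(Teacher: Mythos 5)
Your proposal is correct and is exactly the paper's own proof: the paper disposes of Proposition~\ref{commutator} in one line by citing Proposition~\ref{asso-preLie} followed by Proposition~\ref{pLf-to-OmegaLie}, which is precisely the two-step composition you describe. Nothing further is needed.
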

\begin{proof}
	It follows from Proposition~\ref{asso-preLie} and \ref{pLf-to-OmegaLie}.
\end{proof}

Similar to Theorem \ref{asso-RBO-asso}, we obtain a new BiHom-$\Omega$-Lie algebra by defining a new binary operation by the Rota-Baxter family of weight $\lambda$ on the BiHom-$\Omega$-Lie algebra.

\begin{theorem}\label{Proposition:BiHLie}
	Let $( L,\lbrace \cdot , \cdot \rbrace_{\alpha,\,\beta}
	,p_{\alpha} ,q_{\alpha})_{\alpha,\,\beta\in \Omega} $ be a BiHom-$\Omega$-Lie
	algebra. If	$( R_{\alpha})_{\alpha\in \Omega}$ is a Rota-Baxter family of weight $\lambda $ on $L$ satisfying \[ R_{\alpha}\circ p_{\alpha} =p_{\alpha} \circ R_{\alpha} \; \text{and} \; R_{\alpha}\circ q_{\alpha} =q_{\alpha} \circ R_{\alpha}.\]
	Define a new operation on $L$ by
	\begin{equation*}
		\langle x,y\rangle_{\alpha,\,\beta}:=\lbrace R_{\alpha}(x),y\rbrace _{\alpha,\,\beta}+\lbrace x,R_{\beta}(y)\rbrace _{\alpha,\,\beta}+\lambda \lbrace x,y\rbrace _{\alpha,\,\beta},
	\end{equation*}
for all $x,y\in L,\,\alpha,\,\beta\in \Omega$, then $(L,\left\langle \cdot , \cdot \right\rangle_{\alpha,\,\beta} ,p_{\alpha},q_{\alpha} )_{\alpha,\,\beta\in \Omega}$ is a BiHom-$\Omega$-Lie algebra.
\end{theorem}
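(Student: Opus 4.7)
The plan is to verify the three defining properties of a BiHom-$\Omega$-Lie algebra for $(L,\langle\cdot,\cdot\rangle_{\alpha,\beta},p_\alpha,q_\alpha)_{\alpha,\beta\in\Omega}$ in turn, exactly parallel to the proof of Theorem~\ref{asso-RBO-asso}. First I would check the multiplicativity of $p_\alpha$ and $q_\alpha$ as $\Omega$-Lie algebra morphisms with respect to $\langle\cdot,\cdot\rangle$: apply $p_{\alpha\beta}$ termwise to the three summands defining $\langle x,y\rangle_{\alpha,\beta}$, use that $p_\alpha$ is an $\Omega$-Lie algebra morphism for $\lbrace\cdot,\cdot\rbrace$ (from Definition~\ref{Def-BiHomLie}), and slide $R_\beta$ past $p_\beta$ via $R_\beta\circ p_\beta=p_\beta\circ R_\beta$. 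This immediately gives $p_{\alpha\beta}(\langle x,y\rangle_{\alpha,\beta})=\langle p_\alpha(x),p_\beta(y)\rangle_{\alpha,\beta}$, and the same argument with $q_\alpha$ yields the other identity.

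Next I would verify BiHom-$\Omega$-skew-symmetry. Expand $\langle q_\alpha(x),p_\beta(y)\rangle_{\alpha,\beta}$ into three summands; in each, use the commutativity of $R_\alpha$ with $p_\alpha,q_\alpha$ to move $R$ inside $q_\alpha(x)$ or $p_\beta(y)$, apply Eq.~\eqref{BHO-skew} to each of the three terms, and then collect the results back into the defining formula for $\langle q_\beta(y),p_\alpha(x)\rangle_{\beta,\alpha}$. This gives the sign-flipped identity with no real difficulty.

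The main obstacle, as expected, is the BiHom-$\Omega$-Jacobi condition. The critical observation that makes the argument tractable is that the Rota-Baxter relation Eq.~\eqref{RB-Eq} can be rewritten as
\[
\lbrace R_\alpha(u),R_\beta(v)\rbrace_{\alpha,\beta}=R_{\alpha\beta}\bigl(\langle u,v\rangle_{\alpha,\beta}\bigr),
\]
so that when we compute $\langle q_\alpha^2(x),\langle q_\beta(y),p_\gamma(z)\rangle_{\beta,\gamma}\rangle_{\alpha,\beta\gamma}$, the middle summand $\lbrace q_\alpha^2(x),R_{\beta\gamma}\langle q_\beta(y),p_\gamma(z)\rangle_{\beta,\gamma}\rbrace$ collapses to $\lbrace q_\alpha^2(x),\lbrace R_\beta q_\beta(y),R_\gamma p_\gamma(z)\rbrace\rbrace$. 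After this simplification each cyclic term expands into seven inner bracket expressions, indexed by which subset $S\subseteq\{x,y,z\}$ has $R$ applied to it and which power of $\lambda$ appears as coefficient.

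Finally I would organize the resulting $21$ terms of $\sum_{\mathrm{cyc}}\langle q_\alpha^2(x),\langle q_\beta(y),p_\gamma(z)\rangle\rangle$ by the pattern $S$. Using $R_\alpha q_\alpha=q_\alpha R_\alpha$ and $R_\gamma p_\gamma=p_\gamma R_\gamma$ to push each $R$ past $p$ or $q$, every group of three cyclic terms sharing the same pattern $S$ is precisely an instance of the original BiHom-$\Omega$-Jacobi identity Eq.~\eqref{BHO-Jacobi} applied to $(R^{\epsilon_1}(x),R^{\epsilon_2}(y),R^{\epsilon_3}(z))$, where $\epsilon_i\in\{0,1\}$ records membership in $S$; each such group therefore vanishes. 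Summing over the seven patterns (with their coefficients $1,1,1,\lambda,\lambda,\lambda,\lambda^2$) shows the triple cyclic sum is zero, completing the proof.
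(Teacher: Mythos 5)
Your proposal is correct and follows essentially the same route as the paper's proof: termwise verification of multiplicativity and skew-symmetry using the commutation of $R_\alpha$ with $p_\alpha,q_\alpha$, followed by the Jacobi condition, where the key step in both arguments is collapsing the middle summand via $\lbrace R_\alpha(u),R_\beta(v)\rbrace_{\alpha,\beta}=R_{\alpha\beta}(\langle u,v\rangle_{\alpha,\beta})$ and then recognizing the resulting $21$ terms as instances of Eq.~\eqref{BHO-Jacobi} applied to $(R^{\epsilon_1}(x),R^{\epsilon_2}(y),R^{\epsilon_3}(z))$. Your subset-indexed bookkeeping with coefficients $\lambda^{2-|S|}$ just makes explicit the grouping that the paper carries out by writing all terms in full.
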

\begin{proof}
	For all $ x,y,z\in L,\,\alpha,\,\beta,\,\gamma\in \Omega ,$ we have
	\begin{align*}
		\langle p_{\alpha}(x),p_{\beta}(y)\rangle_{\alpha,\,\beta} =&\lbrace R_{\alpha}(p_{\alpha}(x)),p_{\beta}(y)\rbrace _{\alpha,\,\beta}+\lbrace p_{\alpha} (x),R_{\beta}(p_{\beta}(y))\rbrace _{\alpha,\,\beta}+\lambda \lbrace p_{\alpha}(x),p_{\beta}(y)\rbrace _{\alpha,\,\beta} \\
		=&\lbrace p_{\alpha}(R_{\alpha}(x)),p_{\beta}(y)\rbrace _{\alpha,\,\beta}+\lbrace p_{\alpha}(x),p_{\beta}(R_{\beta}(y))\rbrace _{\alpha,\,\beta}+\lambda \lbrace p_{\alpha}(x),p_{\beta}(y)\rbrace _{\alpha,\,\beta}\\
		&\hspace{1cm} \text{(by $R_{\alpha}\circ p_{\alpha} =p_{\alpha} \circ R_{\alpha}$)} \\
		=&p_{\alpha\,\beta}(\lbrace R_{\alpha}(x),y\rbrace _{\alpha,\,\beta})+p_{\alpha\,\beta}(\lbrace x,R_{\beta}(y)\rbrace _{\alpha,\,\beta})+\lambda p_{\alpha\,\beta}(\lbrace x,y\rbrace _{\alpha,\,\beta})\\
		&\hspace{1cm} \text{(by $p_{\alpha\,\beta}$ being a BiHom-$\Omega$-Lie algebra morphism))}\\
		=&p_{\alpha\,\beta}(\lbrace R_{\alpha}(x),y\rbrace _{\alpha,\,\beta}+\lbrace x,R_{\beta}(y)\rbrace _{\alpha,\,\beta}+\lambda\lbrace x,y\rbrace _{\alpha,\,\beta})\\
		=&p_{\alpha\,\beta}(\left\langle x,y\right\rangle_{\alpha,\,\beta} ).
	\end{align*}
	Similarly, we get $ q_{\alpha\,\beta}(\langle x,y\rangle_{\alpha,\,\beta} )=\langle
	q_{\alpha}(x),q_{\beta}(y)\rangle_{\alpha,\,\beta}. $ For the BiHom-$\Omega$-skew-symmetry, we have
	\begin{align*}
		\langle q_{\alpha}(x),p_{\beta}(y)\rangle _{\alpha,\,\beta}=&\lbrace R_{\alpha}(
		q_{\alpha}(x)),p_{\beta} (y)\rbrace _{\alpha,\,\beta}+\lbrace q_{\alpha}(x),R_{\beta}(p_{\beta}(y))\rbrace _{\alpha,\,\beta}+\lambda \lbrace q_{\alpha} (x),p_{\beta} (
		y)\rbrace  _{\alpha,\,\beta}\\
		=&\lbrace q_{\alpha}(R_{\alpha}(x)),p_{\beta}		(y)\rbrace _{\alpha,\,\beta}+\lbrace q_{\alpha}(x),p_{\beta}(R_{\beta}(y))
		\rbrace  _{\alpha,\,\beta}+\lambda \lbrace q_{\alpha} (x),p_{\beta} (y)\rbrace  _{\alpha,\,\beta}\\
		&\hspace{1cm} \text{(by $R_{\alpha}\circ p_{\alpha} =p_{\alpha} \circ R_{\alpha} $ and $ R_{\alpha}\circ q_{\alpha} =q_{\alpha} \circ R_{\alpha}$)}\\
		=&-\lbrace q_{\beta}(y),p_{\alpha}
		(R_{\alpha}(x))\rbrace _{\beta,\,\alpha}-\lbrace q_{\beta}(R_{\beta}(y)),p_{\alpha} (x)\rbrace _{\beta,\,\alpha} -\lambda \lbrace q_{\beta}(y),p_{\alpha} (x)\rbrace _{\beta,\,\alpha}\\
		&\hspace{1cm} \text{(by Eq.~(\ref{BHO-skew}))}
		\\
		=&-\lbrace q_{\beta} (y),R_{\alpha}(p_{\alpha}(x))\rbrace _{\beta,\,\alpha}-\lbrace R_{\beta}(q_{\beta}(y)),p_{\alpha}(x)
		\rbrace _{\beta,\,\alpha}-\lambda \lbrace q_{\beta}(y),p_{\alpha} (x)\rbrace _{\beta,\,\alpha} \\
		&\hspace{1cm} \text{(by $R_{\alpha}\circ p_{\alpha} =p_{\alpha}\circ R_{\alpha} $ and $ R_{\alpha}\circ q_{\alpha} =q_{\alpha} \circ R_{\alpha}$)}\\
		=&-\langle q_{\beta}(y),p_{\alpha}(x)\rangle_{\beta,\,\alpha} .
	\end{align*}

\noindent	Next, we are going to prove that $\left\langle
	\cdot , \cdot \right\rangle_{\alpha,\,\beta} $ satisfies the BiHom-$\Omega$-Jacobi condition, we have
	\begin{align*}
		\langle& q_{\alpha}^{2}( x) ,\langle q_{\beta} ( y) ,p_{\gamma}
		( z) \rangle_{\beta,\,\gamma} \rangle_{\alpha,\,\beta\gamma}\\
		=&\lbrace R_{\alpha}(q_{\alpha} ^{2}(x)),\langle q_{\beta}(
		y) ,p_{\gamma} ( z) \rangle _{\beta,\,\gamma}\rbrace _{\alpha,\,\beta\gamma}+\lbrace q_{\alpha} ^{2}(x),R_{\beta\gamma}(\langle q_{\beta}
		( y) ,p_{\gamma}( z) \rangle _{\beta,\,\gamma})\rbrace _{\alpha,\,\beta\gamma}\\
		&+\lambda \lbrace  q_{\alpha}^{2}(x),\langle q_{\beta}( y) ,p_{\gamma} ( z) \rangle_{\beta,\,\gamma} \rbrace  _{\alpha,\,\beta\gamma}\\
		=&\lbrace R_{\alpha}(q_{\alpha} ^{2}(x)),\langle q_{\beta}(
		y) ,p_{\gamma} ( z) \rangle _{\beta,\,\gamma}\rbrace _{\alpha,\,\beta\gamma}+\lambda \lbrace  q_{\alpha}^{2}(x),\langle q_{\beta}( y) ,p_{\gamma} ( z) \rangle_{\beta,\,\gamma} \rbrace  _{\alpha,\,\beta\gamma}\\
		&+\lbrace q_{\alpha}^{2}(x),R_{\beta\,\gamma}(\lbrace R_{\beta}q_{\beta}(y),p_{\gamma}(z)\rbrace _{\beta,\,\gamma}+\lbrace q_{\beta}(y),R_{\gamma}p_{\gamma}(z)\rbrace _{\beta,\,\gamma}+\lambda\lbrace q_{\beta}(y),p_{\gamma}(z)\rbrace _{\beta,\,\gamma})\rbrace _{\alpha,\,\beta\,\gamma}\\
		=&\lbrace R_{\alpha}(q_{\alpha}^{2}(x)),\langle q_{\beta}\left( y\right) ,p_{\gamma} ( z) \rangle_{\beta,\,\gamma}\rbrace _{\alpha,\,\beta\gamma}+\lbrace q_{\alpha} ^{2}(x),\lbrace
		R_{\beta}(q_{\beta}\left( y\right)) ,R_{\gamma}(p _{\gamma}\left( z\right)) \rbrace _{\beta,\,\gamma} \rbrace _{\alpha,\,\beta\gamma}\\
		&+\lambda \lbrace
		q_{\alpha}^{2}(x),\langle q_{\beta}\left( y\right) ,p_{\gamma} \left( z\right)\rangle _{\beta,\,\gamma}\rbrace _{\alpha,\,\beta\gamma}\hspace{1cm} \text{(by Eq.~(\ref{RB-Eq}))}
		\\
		=&\lbrace R_{\alpha}(q_{\alpha} ^{2}\left( x\right) ),\lbrace R_{\beta}(q_{\beta}\left( y\right) ),p_{\gamma}\left(
		z\right) \rbrace _{\beta,\,\gamma}\rbrace _{\alpha,\,\beta\gamma}+\lbrace R_{\alpha}(q_{\alpha}^{2}\left( x\right) ),\lbrace q _{\beta}\left( y\right) ,R_{\gamma}(p_{\gamma}\left( z\right) )\rbrace _{\beta,\,\gamma}\rbrace _{\alpha,\,\beta\gamma}\\
		&
		+\lbrace R_{\alpha}(q_{\alpha}^{2}\left( x\right) ),\lambda \lbrace q_{\beta}\left( y\right) ,p _{\gamma}\left( z\right) \rbrace _{\beta,\,\gamma}\rbrace _{\alpha,\,\beta\gamma}+
		\lbrace q_{\alpha}^{2}(x),\lbrace  R_{\beta}(q_{\beta} \left( y\right)) ,R_{\gamma}(p_{\gamma} \left( z\right))
		\rbrace  _{\beta,\,\gamma}\rbrace _{\alpha,\,\beta\gamma}\\
		&+\lambda \lbrace q_{\alpha}^{2}\left( x\right) ,\lbrace R_{\beta}(q_{\beta} \left( y\right)
		),p_{\gamma} \left( z\right) \rbrace _{\beta,\,\gamma}\rbrace _{\alpha,\,\beta\gamma}+\lambda \lbrace q _{\alpha}^{2}\left( x\right)
		,\lbrace q_{\beta} \left( y\right) ,R_{\gamma}(p_{\gamma} \left( z\right) )\rbrace _{\beta,\,\gamma}\rbrace _{\alpha,\,\beta\gamma}\\
		&+\lambda \lbrace q_{\alpha}^{2}\left( x\right) ,\lambda \lbrace q_{\beta} \left( y\right) ,p_{\gamma}\left(
		z\right) \rbrace _{\beta,\,\gamma}\rbrace _{\alpha,\,\beta\gamma}.
	\end{align*}

\noindent	Similarly, we have
	\begin{align*}
		\langle& q_{\beta}^{2}\left(y\right) ,\langle q _{\gamma}\left( z\right) ,p_{\alpha}\left( x\right) \rangle_{\gamma,\alpha} \rangle_{\beta,\,\gamma\alpha}\\
		=&\lbrace R_{\beta}(q_{\beta}^{2}\left( y\right) ),\lbrace R_{\gamma}(q	_{\gamma}\left( z\right) ),p_{\alpha}\left( x\right) \rbrace _{\gamma,\alpha}\rbrace _{\beta,\,\gamma\alpha}+\lbrace R_{\beta}(q_{\beta}^{2}\left( y\right)
		),\lbrace q_{\gamma}\left( z\right) ,R_{\alpha}(p_{\alpha}\left( x\right) )\rbrace _{\gamma,\alpha}\rbrace _{\beta,\,\gamma\alpha}\\
		&+\lbrace R_{\beta}(q_{\beta}^{2}\left(
		y\right) ),\lambda \lbrace q_{\gamma} \left( z\right) ,p_{\alpha} \left( x\right) \rbrace _{\gamma,\alpha}\rbrace _{\beta,\,\gamma\alpha}+
		\lbrace q_{\beta}^{2}(y),\lbrace  R_{\gamma}(q_{\gamma} \left( z\right)) ,R_{\alpha}(p_{\alpha}\left( x\right))
		\rbrace _{\gamma,\alpha} \rbrace _{\beta,\,\gamma\alpha} \\
		&+\lambda \lbrace q_{\beta}^{2}\left( y\right) ,\lbrace R_{\gamma}(q_{\gamma} \left( z\right)
		),p_{\alpha}\left( x\right) \rbrace _{\gamma,\alpha}\rbrace _{\beta,\,\gamma\alpha}+\lambda \lbrace q_{\beta}^{2}\left( y\right)
		,\lbrace q_{\gamma}\left( z\right) ,R_{\alpha}(p_{\alpha}\left( x\right) )\rbrace _{\gamma,\alpha}\rbrace _{\beta,\,\gamma\alpha}\\
		&+\lambda \lbrace q_{\beta}^{2}\left( y\right) ,\lambda \lbrace q_{\gamma}\left( z\right) ,p_{\alpha} \left(x\right) \rbrace _{\gamma,\alpha}\rbrace _{\beta,\,\gamma\alpha},
	\end{align*}
	\begin{align*}
		\langle& q_{\gamma}^{2}\left( z\right) ,\langle q_{\alpha}\left( x\right) ,p_{\beta}\left( y\right) \rangle _{\alpha,\,\beta}\rangle_{\gamma,\alpha\beta}\\
		=&\lbrace R_{\gamma}(q_{\gamma}^{2}\left( z\right) ),\lbrace R_{\alpha}(q_{\alpha}\left( x\right) ),p_{\beta}\left( y\right) \rbrace _{\alpha,\,\beta}\rbrace _{\gamma,\alpha\beta}+\lbrace R_{\gamma}(q_{\gamma}^{2}\left( z\right)
		),\lbrace q_{\gamma} \left( x\right) ,R_{\beta}(p_{\beta}\left( y\right) )\rbrace _{\alpha,\,\beta}\rbrace _{\gamma,\alpha\beta}\\
		&+\lbrace R_{\gamma}(q_{\gamma}^{2}\left(z\right) ),\lambda \lbrace q_{\alpha} \left( x\right) ,p_{\beta}\left( y\right) \rbrace _{\alpha,\,\beta}\rbrace _{\gamma,\alpha\beta}+
		\lbrace q_{\gamma}^{2}(z),\lbrace  R_{\alpha}(q_{\alpha}\left( x\right)) ,R_{\beta}(p_{\beta}\left( y\right))
		\rbrace _{\alpha,\,\beta} \rbrace _{\gamma,\alpha\beta} \\
		&+\lambda \lbrace q_{\gamma}^{2}\left( z\right) ,\lbrace R_{\alpha}(q_{\alpha}\left( x\right)
		),p_{\beta}\left( y\right) \rbrace _{\alpha,\,\beta}\rbrace _{\gamma,\alpha\beta}+\lambda \lbrace q_{\gamma}^{2}\left( z\right)
		,\lbrace q_{\alpha}\left( x\right) ,R_{\beta}(p_{\beta}\left(y\right) )\rbrace _{\alpha,\,\beta}\rbrace _{\gamma,\alpha\beta}\\
		&+\lambda \lbrace q_{\gamma}^{2}\left( z\right) ,\lambda \lbrace q_{\alpha} \left( x\right) ,p_{\beta} \left(y\right) \rbrace _{\alpha,\,\beta}\rbrace _{\gamma,\alpha\beta}.
	\end{align*}
	By adding the items, we obtain
	\begin{align*}
		\langle& q_{\alpha}^{2}\left( x\right) ,\langle q_{\beta}\left( y\right) ,p_{\gamma}\left( z\right) \rangle_{\beta,\,\gamma} \rangle_{\alpha,\,\beta\gamma} +\langle q_{\beta}^{2}\left( y\right)
		,\langle q _{\gamma}\left( z\right) ,p_{\alpha}\left( x\right) \rangle _{\gamma,\alpha}\rangle_{\beta,\,\gamma\alpha}
		+\langle q_{\gamma}^{2}\left( z\right) ,\langle q_{\alpha}\left( x\right) ,p_{\beta}\left( y\right) \rangle _{\alpha,\,\beta}\rangle_{\gamma,\alpha\beta}\\
		=&\lbrace R_{\alpha}(q_{\alpha}^{2}\left( x\right) ),\lbrace R_{\beta}(q_{\beta}\left( y\right) ),p_{\gamma}\left(
		z\right) \rbrace _{\beta,\,\gamma}\rbrace _{\alpha,\,\beta\gamma}+\lbrace R_{\alpha}(q_{\alpha}^{2}\left( x\right) ),\lbrace q_{\beta}\left(y\right) ,R_{\gamma}(p_{\gamma}\left( z\right) )\rbrace _{\beta,\,\gamma}\rbrace _{\alpha,\,\beta\gamma}\\
		&
		+\lbrace R_{\alpha}(q_{\alpha}^{2}\left( x\right) ),\lambda \lbrace q_{\beta}\left( y\right) ,p_{\gamma}\left( z\right) \rbrace _{\beta,\,\gamma}\rbrace _{\alpha,\,\beta\gamma}+
		\lbrace q_{\alpha}^{2}(x),\lbrace  R_{\beta}(q_{\beta}\left( y\right)) ,R_{\gamma}(p_{\gamma}\left( z\right))
		\rbrace  _{\beta,\,\gamma}\rbrace _{\alpha,\,\beta\gamma}\\
		&+\lambda \lbrace q_{\alpha}^{2}\left( x\right) ,\lbrace R_{\beta}(q_{\beta}\left( y\right)
		),p_{\gamma}\left( z\right) \rbrace _{\beta,\,\gamma}\rbrace _{\alpha,\,\beta\gamma}+\lambda \lbrace q_{\alpha}^{2}\left( x\right)
		,\lbrace q_{\beta}\left( y\right) ,R_{\gamma}(p_{\gamma}\left(z\right) )\rbrace _{\beta,\,\gamma}\rbrace _{\alpha,\,\beta\gamma}\\
		&+\lambda \lbrace q_{\alpha}^{2}\left( x\right) ,\lambda \lbrace q_{\beta}\left( y\right) ,p_{\gamma}\left(z\right) \rbrace _{\beta,\,\gamma}\rbrace _{\alpha,\,\beta\gamma}\\
		&+\lbrace R_{\beta}(q_{\beta}^{2}\left( y\right) ),\lbrace R_{\gamma}(q_{\gamma}\left( z\right) ),p_{\alpha}\left( x\right) \rbrace _{\gamma,\alpha}\rbrace _{\beta,\,\gamma\alpha}+\lbrace R_{\beta}(q_{\beta}^{2}\left( y\right)
		),\lbrace q_{\gamma}\left( z\right) ,R_{\alpha}(p_{\alpha}\left(x\right) )\rbrace _{\gamma,\alpha}\rbrace _{\beta,\,\gamma\alpha}\\
		&+\lbrace R_{\beta}(q_{\beta}^{2}\left(y\right) ),\lambda \lbrace q_{\gamma}\left( z\right) ,p_{\alpha}\left( x\right) \rbrace _{\gamma,\alpha}\rbrace _{\beta,\,\gamma\alpha}+
		\lbrace q_{\beta}^{2}(y),\lbrace  R_{\gamma}(q_{\gamma}\left( z\right)) ,R_{\alpha}(p_{\alpha}\left( x\right))
		\rbrace _{\gamma,\alpha} \rbrace _{\beta,\,\gamma\alpha} \\
		&+\lambda \lbrace q_{\beta}^{2}\left( y\right) ,\lbrace R_{\gamma}(q_{\gamma} \left( z\right)
		),p_{\alpha}\left( x\right) \rbrace _{\gamma,\alpha}\rbrace _{\beta,\,\gamma\alpha}+\lambda \lbrace q_{\beta}^{2}\left( y\right)
		,\lbrace q_{\gamma}\left( z\right) ,R_{\alpha}(p_{\alpha}\left(x\right) )\rbrace _{\gamma,\alpha}\rbrace _{\beta,\,\gamma\alpha}\\
		&+\lambda \lbrace q_{\beta}^{2}\left( y\right) ,\lambda \lbrace q_{\gamma}\left( z\right) ,p_{\alpha}\left(x\right) \rbrace _{\gamma,\alpha}\rbrace _{\beta,\,\gamma\alpha}\\
		&+\lbrace R_{\gamma}(q_{\gamma}^{2}\left( z\right) ),\lbrace R_{\alpha}(q_{\alpha}\left( x\right) ),p_{\beta}\left( y\right) \rbrace _{\alpha,\,\beta}\rbrace _{\gamma,\alpha\beta}+\lbrace R_{\gamma}(q_{\gamma}^{2}\left( z\right)
		),\lbrace q_{\alpha}\left( x\right) ,R_{\beta}(p_{\beta}\left(y\right) )\rbrace _{\alpha,\,\beta}\rbrace _{\gamma,\alpha\beta}\\
		&+\lbrace R_{\gamma}(q_{\gamma}^{2}\left(z\right) ),\lambda \lbrace q_{\alpha} \left( x\right) ,p_{\beta}\left( y\right) \rbrace _{\alpha,\,\beta}\rbrace _{\gamma,\alpha\beta}+
		\lbrace q_{\gamma}^{2}(z),\lbrace  R_{\alpha}(q_{\alpha}\left( x\right)) ,R_{\beta}(p_{\beta}\left( y\right))		\rbrace _{\alpha,\,\beta} \rbrace _{\gamma,\alpha\beta} \\
		&+\lambda \lbrace q_{\gamma}^{2}\left( z\right) ,\lbrace R_{\alpha}(q_{\alpha} \left( x\right)
		),p_{\beta}\left( y\right) \rbrace _{\alpha,\,\beta}\rbrace _{\gamma,\alpha\beta}+\lambda \lbrace q_{\gamma}^{2}\left( z\right)
		,\lbrace q_{\alpha}\left( x\right) ,R_{\beta}(p_{\beta}\left(y\right) )\rbrace _{\alpha,\,\beta}\rbrace _{\gamma,\alpha\beta}\\
		&+\lambda \lbrace q_{\gamma}^{2}\left( z\right) ,\lambda \lbrace q_{\alpha} \left( x\right) ,p_{\beta} \left(y\right) \rbrace _{\alpha,\,\beta}\rbrace _{\gamma,\alpha\beta}\\
		=&\lbrace q_{\alpha}^{2}(R_{\alpha}\left( x\right) ),\lbrace q_{\beta}(R_{\beta}(y)),p_{\gamma}\left(z\right) \rbrace _{\beta,\,\gamma}\rbrace _{\alpha,\,\beta\gamma}+\lbrace q_{\alpha}^{2}\left( R_{\alpha}(x)\right) ,\lbrace q_{\beta}\left(y\right) ,p_{\gamma}(R_{\gamma}(z))\rbrace _{\beta,\,\gamma}\rbrace _{\alpha,\,\beta\gamma}\\
		&+\lbrace q_{\alpha}^{2}R_{\alpha}(x),\lambda\lbrace q_{\beta}(y),p_{\gamma}(z)\rbrace _{\beta,\,\gamma}\rbrace _{\alpha,\,\beta\gamma}+\lbrace q_{\alpha}^2(x),\lbrace q_{\beta}R_{\beta}(y),p_{\gamma}R_{\gamma}(z)\rbrace _{\beta,\,\gamma}\rbrace _{\alpha,\,\beta\gamma}\\
		&+\lambda\lbrace q_{\alpha}^2(x),\lbrace q_{\beta}R_{\beta}(y),p_{\gamma}(z)\rbrace _{\beta,\,\gamma}\rbrace _{\alpha,\,\beta\gamma}+\lambda\lbrace q_{\alpha}^2(x),\lbrace q_{\beta}(y),p_{\gamma}R_{\gamma}(z)\rbrace _{\beta,\,\gamma}\rbrace _{\alpha,\,\beta\gamma}\\
		&+\lambda\lbrace q_{\alpha}^2(x),\lambda\lbrace q_{\beta}(y),p_{\gamma}(z)\rbrace _{\beta,\,\gamma}\rbrace _{\alpha,\,\beta\gamma}+\lbrace q_{\beta}^2R_{\beta}(y),\lbrace q_{\gamma}R_{\gamma}(z),p_{\alpha}(x)\rbrace _{\gamma,\alpha}\rbrace _{\beta,\,\gamma\alpha}\\
		&+\lbrace q_{\beta}^2R_{\beta}(y),\lbrace q_{\gamma}(z),p_{\alpha}R_{\alpha}(x)\rbrace _{\gamma,\alpha}\rbrace _{\beta,\,\gamma\alpha}+\lbrace q_{\beta}^2R_{\beta}(y),\lambda\lbrace q_{\gamma}(z),p_{\alpha}(x)\rbrace _{\gamma,\alpha}\rbrace _{\beta,\,\gamma\alpha}\\
		&+\lbrace q_{\beta}^2(y),\lbrace q_{\gamma}R_{\gamma}(z),p_{\alpha}R_{\alpha}(x)\rbrace _{\gamma,\alpha}\rbrace _{\beta,\,\gamma\alpha}+\lbrace q_{\beta}^2(y),\lbrace q_{\gamma}R_{\gamma}(z),p_{\alpha}(x)\rbrace _{\gamma,\alpha}\rbrace _{\beta,\,\gamma\alpha}\\
		&+\lambda\lbrace q_{\beta}^2(y),\lbrace q_{\gamma}(z),p_{\alpha}R_{\alpha}(x)\rbrace _{\gamma,\alpha}\rbrace _{\beta,\,\gamma\alpha}+\lambda\lbrace q_{\beta}^2(y),\lambda\lbrace q_{\gamma}(z),p_{\alpha}(x)\rbrace _{\gamma,\alpha}\rbrace _{\beta,\,\gamma\alpha}\\
		&+\lbrace q_{\gamma}^2R_{\gamma}(z),\lbrace q_{\alpha}R_{\alpha}(x),p_{\beta}(y)\rbrace _{\alpha,\,\beta}\rbrace _{\gamma,\alpha\beta}+\lbrace q_{\gamma}^2R_{\gamma}(z),\lbrace q_{\alpha}(x),p_{\beta}R_{\beta}(y)\rbrace _{\alpha,\,\beta}\rbrace _{\gamma,\alpha\beta}\\
		&+\lbrace q_{\gamma}^2R_{\gamma}(z),\lambda\lbrace q_{\alpha}(x),p_{\beta}(y)\rbrace _{\alpha,\,\beta}\rbrace _{\gamma,\alpha\beta}+\lbrace q_{\gamma}^2(z),\lbrace q_{\alpha}R_{\alpha}(x),p_{\beta}R_{\beta}(y)\rbrace _{\alpha,\,\beta}\rbrace _{\gamma,\alpha\beta}\\
		&+\lambda\lbrace q_{\gamma}^2(z),\lbrace q_{\alpha}R_{\alpha}(x),p_{\beta}(y)\rbrace _{\alpha,\,\beta}\rbrace _{\gamma,\alpha\beta}+\lambda\lbrace q_{\gamma}^2(z),\lbrace q_{\alpha}(x),p_{\beta}R_{\beta}(y)\rbrace _{\alpha,\,\beta}\rbrace _{\gamma,\alpha\beta}\\
		&+\lambda\lbrace q_{\gamma}^2(z),\lambda\lbrace q_{\alpha}(x),p_{\beta}(y)\rbrace _{\alpha,\,\beta}\rbrace _{\gamma,\alpha\beta}\hspace{1cm} \text{(by $ R_{\alpha}\circ p_{\alpha}=p_{\alpha}\circ R_{\alpha} ,\, R_{\alpha}\circ q_{\alpha}=q_{\alpha}\circ R_{\alpha}$)}\\
		=&0.\hspace{1cm}\text{(by Eq.~(\ref{BHO-Jacobi}))}
	\end{align*}
\end{proof}

\begin{coro}
	Let $(L,[\cdot ,\cdot ]_{\alpha,\,\beta})_{\alpha,\,\beta\in \Omega}$ be an $\Omega$-Lie algebra. If $(R_{\alpha})_{\alpha\in \Omega}$ is a Rota-Baxter family of weight $\lambda $ on $L$.
	Define a new multiplication on $L$ by
	\begin{equation*}
		[x,y]'_{\alpha,\,\beta}:=[R_{\alpha}(x),y]_{\alpha,\,\beta}+[x,R_{\beta}(y)]_{\alpha,\,\beta}+\lambda \lbrack x,y]_{\alpha,\,\beta},
	\end{equation*}
	for all $x,y\in L,\,\alpha,\,\beta\in \Omega,$ then $(L,[ \cdot , \cdot ]'_{\alpha,\,\beta})_{\alpha,\,\beta\in \Omega}$ is an $\Omega$-Lie algebra.
\end{coro}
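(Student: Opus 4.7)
The plan is to deduce the corollary as the specialization of Theorem~\ref{Proposition:BiHLie} in which all structure maps are the identity. First I would observe that any $\Omega$-Lie algebra $(L,[\cdot,\cdot]_{\alpha,\,\beta})_{\alpha,\,\beta\in\Omega}$ can be regarded as a BiHom-$\Omega$-Lie algebra $(L,[\cdot,\cdot]_{\alpha,\,\beta},p_{\alpha},q_{\alpha})_{\alpha,\,\beta\in\Omega}$ with $p_{\alpha}=q_{\alpha}=\id_{L}$ for every $\alpha\in\Omega$. The verification is immediate: the multiplicativity and commutativity of $p_{\alpha},q_{\alpha}$ are trivial, and the BiHom-$\Omega$-skew-symmetry~\eqref{BHO-skew} together with the BiHom-$\Omega$-Jacobi condition~\eqref{BHO-Jacobi} collapse respectively to Eq.~\eqref{Omega-Lie-skew} and Eq.~\eqref{Omega-Lie-Jacobi}, which hold by hypothesis.

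Next, since $p_{\alpha}=q_{\alpha}=\id_{L}$, the compatibility assumption $R_{\alpha}\circ p_{\alpha}=p_{\alpha}\circ R_{\alpha}$ and $R_{\alpha}\circ q_{\alpha}=q_{\alpha}\circ R_{\alpha}$ required in Theorem~\ref{Proposition:BiHLie} is automatic. Hence I can apply that theorem to conclude that
\[
\langle x,y\rangle_{\alpha,\,\beta}
:=\{R_{\alpha}(x),y\}_{\alpha,\,\beta}+\{x,R_{\beta}(y)\}_{\alpha,\,\beta}+\lambda\{x,y\}_{\alpha,\,\beta}
\]
endows $L$ with the structure of a BiHom-$\Omega$-Lie algebra $(L,\langle\cdot,\cdot\rangle_{\alpha,\,\beta},\id_{L},\id_{L})_{\alpha,\,\beta\in\Omega}$, where $\{\cdot,\cdot\}_{\alpha,\,\beta}=[\cdot,\cdot]_{\alpha,\,\beta}$ here.

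Finally, I would note that the bracket $\langle\cdot,\cdot\rangle_{\alpha,\,\beta}$ constructed by the theorem is by inspection identical to the bracket $[\cdot,\cdot]'_{\alpha,\,\beta}$ defined in the corollary, and that a BiHom-$\Omega$-Lie algebra whose two structure maps are both $\id_{L}$ is, by Definition~\ref{Def-BiHomLie}, nothing other than an ordinary $\Omega$-Lie algebra. This yields the desired $\Omega$-Lie algebra structure on $(L,[\cdot,\cdot]'_{\alpha,\,\beta})_{\alpha,\,\beta\in\Omega}$. There is no real obstacle here: the work has already been carried out in Theorem~\ref{Proposition:BiHLie}, and the corollary amounts to recognizing that the classical $\Omega$-setting is recovered by trivializing the twisting maps.
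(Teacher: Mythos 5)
Your proposal is correct and follows exactly the paper's own argument: the paper also derives the corollary from Theorem~\ref{Proposition:BiHLie} by taking $p_{\alpha}=q_{\alpha}=\mathrm{id}_{L}$ for all $\alpha\in\Omega$. Your additional remarks checking that the identity maps trivially satisfy the commutation and morphism conditions merely spell out what the paper leaves implicit.
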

\begin{proof}
	It follows from Theorem \ref{Proposition:BiHLie} by taking $p_{\alpha}=q_{\alpha}=\text{id}_{L}$ for $\alpha\in \Omega$.
\end{proof}

In~\cite{aguiar}, Aguiar has proved that a pre-Lie algebra induced by the Rota-Baxter family of weight zero on a Lie algebra, and the BiHom and Hom analogue of Aguiar's result were studied in~\cite{bihomprelie,makhloufrotabaxter}, now we generalize the classical results to BiHom-$\Omega$ version.

\begin{theorem}\label{lrprelie}
	Let $(L, \lbrace \cdot,\cdot \rbrace _{\alpha,\,\beta}, p_{\alpha}, q_{\alpha} )_{\alpha,\,\beta\in \Omega}$ be a BiHom-$\Omega$-Lie algebra and $(R_{\alpha})_{\alpha\in \Omega}$ be a
	Rota-Baxter family of weight $0$ on $L$ such that \[ R_{\alpha}\circ p_{\alpha} =p_{\alpha} \circ R_{\alpha} \; \text{and} \; R_{\alpha}\circ q_{\alpha} =q_{\alpha} \circ R_{\alpha}.\]
	Define the operation on $L$ by
	\[ x\blacktriangleright_{\alpha,\,\beta} y:=\lbrace R_{\alpha}(x), y\rbrace _{\alpha,\,\beta},\]
	for all $ x, y\in L,\,\alpha,\,\beta\in\Omega.$	Then $(L, \blacktriangleright_{\alpha,\,\beta} , p_{\alpha} , q_{\alpha} )_{\alpha,\,\beta\in \Omega}$ is a BiHom-$\Omega$-pre-Lie  algebra.
\end{theorem}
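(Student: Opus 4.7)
The plan is to verify the two axioms of a BiHom-$\Omega$-pre-Lie algebra for $\blacktriangleright_{\alpha,\beta}$. Multiplicativity is routine: expanding $p_{\alpha\beta}(x \blacktriangleright_{\alpha,\beta} y) = p_{\alpha\beta}(\lbrace R_\alpha(x), y \rbrace_{\alpha,\beta})$, I would apply Eq.~(\ref{Omega-Lie-morphism}) (since the structure maps of the BiHom-$\Omega$-Lie algebra are $\Omega$-Lie morphisms) together with $R_\alpha \circ p_\alpha = p_\alpha \circ R_\alpha$ to obtain $\lbrace R_\alpha p_\alpha(x), p_\beta(y) \rbrace_{\alpha,\beta} = p_\alpha(x) \blacktriangleright_{\alpha,\beta} p_\beta(y)$, with the analogous computation for $q_\alpha$; the commutativity $p_\alpha \circ q_\alpha = q_\alpha \circ p_\alpha$ is inherited from the ambient BiHom-$\Omega$-Lie algebra.

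For the defining identity Eq.~(\ref{BHO-prelie-alg}), I would substitute the definition of $\blacktriangleright_{\alpha,\beta}$ throughout and use $R_\alpha p_\alpha = p_\alpha R_\alpha$, $R_\alpha q_\alpha = q_\alpha R_\alpha$ to collect the four summands
\begin{align*}
T_1 &= \lbrace p_\alpha q_\alpha R_\alpha(x), \lbrace p_\beta R_\beta(y), z \rbrace_{\beta,\gamma} \rbrace_{\alpha,\beta\gamma}, \\
T_2 &= \lbrace R_{\alpha\beta}(\lbrace q_\alpha R_\alpha(x), p_\beta(y) \rbrace_{\alpha,\beta}), q_\gamma(z) \rbrace_{\alpha\beta,\gamma}, \\
T_3 &= \lbrace p_\beta q_\beta R_\beta(y), \lbrace p_\alpha R_\alpha(x), z \rbrace_{\alpha,\gamma} \rbrace_{\beta,\alpha\gamma}, \\
T_4 &= \lbrace R_{\alpha\beta}(\lbrace q_\beta R_\beta(y), p_\alpha(x) \rbrace_{\beta,\alpha}), q_\gamma(z) \rbrace_{\alpha\beta,\gamma},
\end{align*}
so that Eq.~(\ref{BHO-prelie-alg}) is equivalent to $T_1 - T_3 = T_2 - T_4$. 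To handle the right-hand side, I would first apply the BiHom-$\Omega$-skew-symmetry Eq.~(\ref{BHO-skew}) to rewrite $\lbrace q_\beta R_\beta(y), p_\alpha(x) \rbrace_{\beta,\alpha} = -\lbrace q_\alpha(x), p_\beta R_\beta(y) \rbrace_{\alpha,\beta}$, so that $T_2 - T_4$ becomes $\lbrace R_{\alpha\beta}(\lbrace R_\alpha q_\alpha(x), p_\beta(y) \rbrace_{\alpha,\beta} + \lbrace q_\alpha(x), R_\beta p_\beta(y) \rbrace_{\alpha,\beta}), q_\gamma(z) \rbrace_{\alpha\beta,\gamma}$; the weight-$0$ Rota-Baxter relation Eq.~(\ref{RB-Eq}) with inputs $q_\alpha(x)$ and $p_\beta(y)$ then collapses this into the single expression $\lbrace \lbrace q_\alpha R_\alpha(x), p_\beta R_\beta(y) \rbrace_{\alpha,\beta}, q_\gamma(z) \rbrace_{\alpha\beta,\gamma}$.

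It remains to show that $T_1 - T_3$ equals the same commutator-bracketed expression. This is the role of the BiHom-$\Omega$-Jacobi identity Eq.~(\ref{BHO-Jacobi}) applied to the triple $(R_\alpha(x), R_\beta(y), z)$: after pushing the structure maps through using $R \circ p = p \circ R$, $R \circ q = q \circ R$ and $p \circ q = q \circ p$, and flipping the middle cyclic summand by a further application of BiHom-$\Omega$-skew-symmetry, the three Jacobi terms rearrange to yield exactly $T_1 - T_3 = \lbrace \lbrace q_\alpha R_\alpha(x), p_\beta R_\beta(y) \rbrace_{\alpha,\beta}, q_\gamma(z) \rbrace_{\alpha\beta,\gamma}$. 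Together with the reduction of $T_2 - T_4$, this establishes Eq.~(\ref{BHO-prelie-alg}). The main obstacle I anticipate is the index bookkeeping: matching the specific placement of $q_\alpha^2, q_\beta, p_\gamma$ in the Jacobi summands with the $p_\alpha q_\alpha R_\alpha(x), p_\beta R_\beta(y), z$ appearing in $T_1, T_3$ requires careful chaining of multiplicativity and skew-symmetry, and using that $\Omega$ is commutative so that identifications such as $\alpha\beta\gamma = \gamma\alpha\beta$ are available.
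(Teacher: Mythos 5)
Your proposal follows essentially the same route as the paper's proof: expand both sides of Eq.~(\ref{BHO-prelie-alg}), use BiHom-$\Omega$-skew-symmetry together with the weight-$0$ Rota-Baxter identity to collapse the two $R_{\alpha\beta}(\cdots)$ terms into the single bracket $\lbrace\lbrace q_{\alpha}R_{\alpha}(x),p_{\beta}R_{\beta}(y)\rbrace_{\alpha,\beta},q_{\gamma}(z)\rbrace_{\alpha\beta,\gamma}$, and then invoke the BiHom-$\Omega$-Jacobi identity to identify that bracket with $T_1-T_3$; your rearrangement of the target into $T_1-T_3=T_2-T_4$ is only a cosmetic reorganization of the paper's computation. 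Be aware that the final Jacobi/skew-symmetry step, in your outline exactly as in the paper's own proof, silently uses the invertibility of the structure maps (the paper writes $p_{\gamma}^{-1}$ and $q_{\alpha\beta}^{-1}$ explicitly), which the theorem statement does not assume.
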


\begin{proof}
	For any $x,y,z\in L,\,\alpha,\,\beta,\,\gamma\in \Omega,$ we have
	\begin{align*}
		p_{\alpha\,\beta}(x\blacktriangleright_{\alpha,\,\beta}y)=&p_{\alpha\,\beta}(\lbrace R_{\alpha}(x),y\rbrace _{\alpha,\,\beta})\\
		=&\lbrace p_{\alpha}R_{\alpha}(x),p_{\beta}(y)\rbrace _{\alpha,\,\beta}\hspace{1cm}\text{(by $p_{\alpha\,\beta}$ satisfying Eq.~(\ref{Omega-Lie-morphism})})\\
		=&\lbrace R_{\alpha}p_{\alpha}(x),p_{\beta}(y)\rbrace _{\alpha,\,\beta}\hspace{1cm}\text{(by $R_{\alpha}\circ p_{\alpha}=p_{\alpha}\circ R_{\alpha}$)}\\
		=&p_{\alpha}(x)\blacktriangleright_{\alpha,\,\beta}p_{\beta}(y).
	\end{align*}
	Similarly, we get
	$q_{\alpha\,\beta}(x\blacktriangleright_{\alpha,\,\beta}y)=q_{\alpha}(x)\blacktriangleright_{\alpha,\,\beta}q_{\beta}(y). $	Next, we only need to prove Eq.~(\ref{BHO-prelie-alg}). On the one hand, we have
	\begin{align*}
		p&_{\alpha}q_{\alpha}(x)\blacktriangleright_{\alpha,\,\beta\,\gamma}(p_{\beta}(y)\blacktriangleright_{\beta,\,\gamma}z)-(q_{\alpha}(x)\blacktriangleright_{\alpha,\,\beta}p_{\beta}(y))\blacktriangleright_{\alpha\,\beta,\,\gamma}q_{\gamma}(z)\\
		=&p_{\alpha}q_{\alpha}(x)\blacktriangleright_{\alpha,\,\beta\,\gamma}\lbrace R_{\beta}p_{\beta}(y),z\rbrace _{\beta,\,\gamma}-\lbrace R_{\alpha}q_{\alpha}(x),p_{\beta}(y)\rbrace _{\alpha,\,\beta}\blacktriangleright_{\alpha\,\beta,\,\gamma}q_{\gamma}(z)\\
		=&\lbrace R_{\alpha}p_{\alpha}q_{\alpha}(x),\lbrace R_{\beta}p_{\beta}(y),z\rbrace _{\beta,\,\gamma}\rbrace _{\alpha,\,\beta\,\gamma}-\lbrace R_{\alpha\,\beta}(\lbrace R_{\alpha}q_{\alpha}(x),p_{\beta}(y)\rbrace _{\alpha,\,\beta}),q_{\gamma}(z)\rbrace _{\alpha\,\beta,\,\gamma}\\
		=&\lbrace R_{\alpha}p_{\alpha}q_{\alpha}(x),\lbrace R_{\beta}p_{\beta}(y),z\rbrace _{\beta,\,\gamma}\rbrace _{\alpha,\,\beta\,\gamma}+\lbrace R_{\alpha\,\beta}(\lbrace q_{\beta}(y),p_{\alpha}R_{\alpha}(x)\rbrace _{\beta,\,\alpha}),q_{\gamma}(z)\rbrace _{\beta\,\alpha,\,\gamma}\\
		&\hspace{1cm}\text{(by Eq.~(\ref{BHO-skew})})\\
		=&\lbrace q_{\alpha}p_{\alpha}R_{\alpha}(x),\lbrace p_{\beta}R_{\beta}(y),z\rbrace _{\beta,\,\gamma}\rbrace _{\alpha,\,\beta\,\gamma}+\lbrace R_{\alpha\,\beta}(\lbrace q_{\beta}(y),p_{\alpha}R_{\alpha}(x)\rbrace _{\beta,\,\alpha}),q_{\gamma}(z)\rbrace _{\beta\,\alpha,\,\gamma}.\\
		&\hspace{1cm}\text{(by $p_{\alpha},q_{\alpha},R_{\alpha}$ commuting with each other})
	\end{align*}
	
\noindent	On the other hand, we have
		\begin{align*}
		p&_{\beta}q_{\beta}(y)\blacktriangleright_{\beta,\,\alpha\,\gamma}(p_{\alpha}(x)\blacktriangleright_{\alpha,\,\gamma}z)-(q_{\beta}(y)\blacktriangleright_{\beta,\,\alpha}p_{\alpha}(x))\blacktriangleright_{\beta\,\alpha,\,\gamma}q_{\gamma}(z)\\
		=&p_{\beta}q_{\beta}(y)\blacktriangleright_{\beta,\,\alpha\,\gamma}\lbrace R_{\alpha}p_{\alpha}(x),z\rbrace _{\alpha,\,\gamma}-\lbrace R_{\beta}q_{\beta}(y),p_{\alpha}(x)\rbrace _{\beta,\,\alpha}\blacktriangleright_{\beta\,\alpha,\,\gamma}q_{\gamma}(z)\\
		=&\lbrace R_{\beta}p_{\beta}q_{\beta}(y),\lbrace R_{\alpha}p_{\alpha}(x),z\rbrace _{\alpha,\,\gamma}\rbrace _{\beta,\,\alpha\,\gamma}-\lbrace R_{\beta\,\alpha}(\lbrace R_{\beta}q_{\beta}(y),p_{\alpha}(x)\rbrace _{\beta,\,\alpha}),q_{\gamma}(z)\rbrace _{\beta\,\alpha,\,\gamma}\\
		=&\lbrace R_{\beta}p_{\beta}q_{\beta}(y),\lbrace R_{\alpha}p_{\alpha}(x),z\rbrace _{\alpha,\,\gamma}\rbrace _{\beta,\,\alpha\,\gamma}-\lbrace \lbrace R_{\beta}q_{\beta}(y),R_{\alpha}p_{\alpha}(x)\rbrace _{\beta,\,\alpha},q_{\gamma}(z)\rbrace _{\beta\,\alpha,\,\gamma}\\
		&+\lbrace R_{\beta\,\alpha}(\lbrace q_{\beta}(y),R_{\alpha}p_{\alpha}(x)\rbrace _{\beta,\,\alpha}),q_{\gamma}(z)\rbrace _{\beta\,\alpha,\,\gamma}\hspace{1cm}\text{(by Eq.~(\ref{RB-Eq}))}\\
		=&\lbrace q_{\beta}p_{\beta}R_{\beta}(y),\lbrace p_{\alpha}R_{\alpha}(x),z\rbrace _{\alpha,\,\gamma}\rbrace _{\beta,\,\alpha\,\gamma}-\lbrace \lbrace q_{\beta}R_{\beta}(y),p_{\alpha}R_{\alpha}(x)\rbrace _{\beta,\,\alpha},q_{\gamma}(z)\rbrace _{\beta\,\alpha,\,\gamma}\\
		&+\lbrace R_{\alpha\,\beta}(\lbrace q_{\beta}(y),p_{\alpha}R_{\alpha}(x)\rbrace _{\beta,\,\alpha}),q_{\gamma}(z)\rbrace _{\beta\,\alpha,\,\gamma}\hspace{1cm}\text{(by $p_{\alpha},q_{\alpha},R_{\alpha}$ commuting with each other)}\\
		=&\lbrace q_{\beta}p_{\beta}R_{\beta}(y),\lbrace p_{\alpha}R_{\alpha}(x),z\rbrace _{\alpha,\,\gamma}\rbrace _{\beta,\,\alpha\,\gamma}+\lbrace \lbrace q_{\alpha}R_{\alpha}(x),p_{\beta}R_{\beta}(y)\rbrace _{\alpha,\,\beta},q_{\gamma}(z)\rbrace _{\alpha\,\beta,\,\gamma}\\
		&+\lbrace R_{\alpha\,\beta}(\lbrace q_{\beta}(y),p_{\alpha}R_{\alpha}(x)\rbrace _{\beta,\,\alpha}),q_{\gamma}(z)\rbrace _{\beta\,\alpha,\,\gamma}\hspace{1cm}\text{(by Eq.~(\ref{BHO-skew})})\\
		=&\lbrace q_{\beta}p_{\beta}R_{\beta}(y),\lbrace p_{\alpha}R_{\alpha}(x),z\rbrace _{\alpha,\,\gamma}\rbrace _{\beta,\,\alpha\,\gamma}+\lbrace q_{\alpha\,\beta}q_{\alpha\,\beta}^{-1}(\lbrace q_{\alpha}R_{\alpha}(x),p_{\beta}R_{\beta}(y)\rbrace _{\alpha,\,\beta}),p_{\gamma}q_{\gamma}p_{\gamma}^{-1}(z)\rbrace _{\alpha\,\beta,\,\gamma}\\
		&+\lbrace R_{\alpha\,\beta}(\lbrace q_{\beta}(y),p_{\alpha}R_{\alpha}(x)\rbrace _{\beta,\,\alpha}),q_{\gamma}(z)\rbrace _{\beta\,\alpha,\,\gamma}\\
		=&\lbrace q_{\beta}p_{\beta}R_{\beta}(y),\lbrace p_{\alpha}R_{\alpha}(x),z\rbrace _{\alpha,\,\gamma}\rbrace _{\beta,\,\alpha\,\gamma}-\lbrace q_{\gamma}^{2}p_{\gamma}^{-1}(z),p_{\alpha\,\beta}q_{\alpha\,\beta}^{-1}(\lbrace q_{\alpha}R_{\alpha}(x),p_{\beta}R_{\beta}(y)\rbrace _{\alpha,\,\beta})\rbrace _{\gamma,\,\alpha\,\beta}\\
		&+\lbrace R_{\alpha\,\beta}(\lbrace q_{\beta}(y),p_{\alpha}R_{\alpha}(x)\rbrace _{\beta,\,\alpha}),q_{\gamma}(z)\rbrace _{\beta\,\alpha,\,\gamma}\hspace{1cm}\text{(by Eq.~(\ref{BHO-skew})})\\
		=&\lbrace q_{\beta}p_{\beta}R_{\beta}(y),\lbrace p_{\alpha}R_{\alpha}(x),z\rbrace _{\alpha,\,\gamma}\rbrace _{\beta,\,\alpha\,\gamma}-\lbrace q_{\gamma}^{2}p_{\gamma}^{-1}(z),\lbrace p_{\alpha}R_{\alpha}(x),p_{\beta}^{2}q_{\beta}^{-1}R_{\beta}(y)\rbrace _{\alpha,\,\beta}\rbrace _{\gamma,\,\alpha\,\beta}\\
		&+\lbrace R_{\alpha\,\beta}(\lbrace q_{\beta}(y),p_{\alpha}R_{\alpha}(x)\rbrace _{\beta,\,\alpha}),q_{\gamma}(z)\rbrace _{\beta\,\alpha,\,\gamma}\hspace{1cm}\text{(by $p_{\alpha\,\beta},q_{\alpha\,\beta}^{-1}$ satisfying Eq.~(\ref{Omega-Lie-morphism})})\\
		=&\lbrace q_{\beta}p_{\beta}R_{\beta}(y),\lbrace p_{\alpha}R_{\alpha}(x),z\rbrace _{\alpha,\,\gamma}\rbrace _{\beta,\,\alpha\,\gamma}+\lbrace q_{\alpha}^{2}q_{\alpha}^{-1}p_{\alpha}R_{\alpha}(x),\lbrace q_{\beta}p_{\beta}q_{\beta}^{-1}R_{\beta}(y),p_{\gamma}p_{\gamma}^{-1}(z)\rbrace _{\beta,\,\gamma}\rbrace _{\alpha,\,\beta\,\gamma}\\
		&+\lbrace q_{\beta}^{2}p_{\beta}q_{\beta}^{-1}R_{\beta}(y),\lbrace q_{\gamma}p_{\gamma}^{-1}(z),p_{\alpha}q_{\alpha}^{-1}p_{\alpha}R_{\alpha}(x)\rbrace _{\gamma,\,\alpha}\rbrace _{\beta,\,\gamma\,\alpha}+\lbrace R_{\alpha\,\beta}(\lbrace q_{\beta}(y),p_{\alpha}R_{\alpha}(x)\rbrace _{\beta,\,\alpha}),q_{\gamma}(z)\rbrace _{\beta\,\alpha,\,\gamma}\\
		&\hspace{7.4cm}\text{(by Eq.~(\ref{BHO-Jacobi})})\\
		=&\lbrace q_{\beta}p_{\beta}R_{\beta}(y),\lbrace p_{\alpha}R_{\alpha}(x),z\rbrace _{\alpha,\,\gamma}\rbrace _{\beta,\,\alpha\,\gamma}+\lbrace q_{\alpha}p_{\alpha}R_{\alpha}(x),\lbrace p_{\beta}R_{\beta}(y),z\rbrace _{\beta,\,\gamma}\rbrace _{\alpha,\,\beta\,\gamma}\\
		&+\lbrace q_{\beta}p_{\beta}R_{\beta}(y),\lbrace q_{\gamma}p_{\gamma}^{-1}(z),p_{\alpha}^{2}q_{\alpha}^{-1}R_{\alpha}(x)\rbrace _{\gamma,\,\alpha}\rbrace _{\beta,\,\gamma\,\alpha}+\lbrace R_{\alpha\,\beta}(\lbrace q_{\beta}(y),p_{\alpha}R_{\alpha}(x)\rbrace _{\beta,\,\alpha}),q_{\gamma}(z)\rbrace _{\beta\,\alpha,\,\gamma}\\
		=&\lbrace q_{\beta}p_{\beta}R_{\beta}(y),\lbrace p_{\alpha}R_{\alpha}(x),z\rbrace _{\alpha,\,\gamma}\rbrace _{\beta,\,\alpha\,\gamma}+\lbrace q_{\alpha}p_{\alpha}R_{\alpha}(x),\lbrace p_{\beta}R_{\beta}(y),z\rbrace _{\beta,\,\gamma}\rbrace _{\alpha,\,\beta\,\gamma}\\
		&-\lbrace q_{\beta}p_{\beta}R_{\beta}(y),\lbrace q_{\alpha}p_{\alpha}q_{\alpha}^{-1}R_{\alpha}(x),p_{\gamma}p_{\gamma}^{-1}(z)\rbrace _{\alpha,\,\gamma}\rbrace _{\beta,\,\alpha\,\gamma}+\lbrace R_{\alpha\,\beta}(\lbrace q_{\beta}(y),p_{\alpha}R_{\alpha}(x)\rbrace _{\beta,\,\alpha}),q_{\gamma}(z)\rbrace _{\beta\,\alpha,\,\gamma}\\
		&\hspace{7.4cm}\text{(by Eq.~(\ref{BHO-skew}))}\\
		=&\lbrace q_{\beta}p_{\beta}R_{\beta}(y),\lbrace p_{\alpha}R_{\alpha}(x),z\rbrace _{\alpha,\,\gamma}\rbrace _{\beta,\,\alpha\,\gamma}+\lbrace q_{\alpha}p_{\alpha}R_{\alpha}(x),\lbrace p_{\beta}R_{\beta}(y),z\rbrace _{\beta,\,\gamma}\rbrace _{\alpha,\,\beta\,\gamma}\\
		&-\lbrace q_{\beta}p_{\beta}R_{\beta}(y),\lbrace p_{\alpha}R_{\alpha}(x),z\rbrace _{\alpha,\,\gamma}\rbrace _{\beta,\,\alpha\,\gamma}+\lbrace R_{\alpha\,\beta}(\lbrace q_{\beta}(y),p_{\alpha}R_{\alpha}(x)\rbrace _{\beta,\,\alpha}),q_{\gamma}(z)\rbrace _{\beta\,\alpha,\,\gamma}\\
		=&\lbrace q_{\alpha}p_{\alpha}R_{\alpha}(x),\lbrace p_{\beta}R_{\beta}(y),z\rbrace _{\beta,\,\gamma}\rbrace _{\alpha,\,\beta\,\gamma}+\lbrace R_{\alpha\,\beta}(\lbrace q_{\beta}(y),p_{\alpha}R_{\alpha}(x)\rbrace _{\beta,\,\alpha}),q_{\gamma}(z)\rbrace _{\beta\,\alpha,\,\gamma}.
	\end{align*}
	By comparing the items of both sides, we get
	\begin{align*}
		p&_{\alpha}q_{\alpha} (x)\blacktriangleright_{\alpha,\,\beta\gamma} (p_{\beta} (y)\blacktriangleright_{\beta,\,\gamma} z)-(q_{\alpha} (x)\blacktriangleright_{\alpha,\,\beta} p_{\beta} (y))\blacktriangleright_{\alpha\beta,\,\gamma} q_{\gamma} (z)\\
		=&p_{\beta} q_{\beta} (y)\blacktriangleright_{\beta,\,\alpha\gamma} (p_{\alpha}(x)\blacktriangleright_{\alpha,\gamma} z)-(q_{\beta}(y)\blacktriangleright_{\beta,\,\alpha} p_{\alpha}(x))\blacktriangleright_{\beta\alpha,\gamma} q_{\gamma}(z).
	\end{align*}
	This completes the proof.
\end{proof}

\section{BiHom-$\Omega$-PostLie algebras and BiHom-$\Omega$-pre-Possion algebras}\label{sec5}
In this section, we continue to assume that $\Omega$ is a commutative semigroup.
\subsection{BiHom-$\Omega$-PostLie algebras}
In this subsection, we mainly study the relationship between BiHom-$\Omega$-PostLie algebras and BiHom-$\Omega$-Lie algebras. First, we generlize the concept of PostLie algebras~\cite{bai,vallette} to $\Omega$ version.
\begin{defn}\label{Omega-PostLie}
	An $\mathbf{\Omega}$-$\mathbf{PostLie \, algebra}$ $(L, [\cdot , \cdot ]_{\alpha,\,\beta}, \rhd_{\alpha,\,\beta} )_{\alpha,\,\beta\in\Omega}$ is a vector space $L$ equipped with two families of bilinear operations $[\cdot , \cdot ]_{\alpha,\,\beta}$,
	$\rhd_{\alpha,\,\beta} :L\times L\rightarrow L$, such that $(L, [\cdot , \cdot ]_{\alpha,\,\beta})_{\alpha,\,\beta\in\Omega}$ is an $\Omega$-Lie  algebra and
	\begin{align*}
		[x,y]_{\alpha,\,\beta}\rhd_{\alpha\beta,\,\gamma}z=&x\rhd_{\alpha,\,\beta\gamma}(y\rhd_{\beta,\,\gamma}z)-(x\rhd_{\alpha,\,\beta}y)\rhd_{\alpha\beta,\,\gamma}z-y\rhd_{\beta,\,\alpha\gamma}(x\rhd_{\alpha,\gamma}z)+(y\rhd_{\beta,\,\alpha}x)\rhd_{\beta\alpha,\gamma}z,\\
		x\rhd_{\alpha,\,\beta\gamma}[y,z]_{\beta,\,\gamma}=&[x\rhd_{\alpha,\,\beta}y,z]_{\alpha\beta,\,\gamma}+[y,x\rhd_{\alpha,\gamma}z]_{\beta,\,\alpha\gamma},
	\end{align*}
 for all $x, y, z\in L,\,\alpha,\,\beta,\,\gamma\in \Omega$.
\end{defn}

\begin{defn}
	Let $ (L,[\cdot,\cdot]_{\alpha,\,\beta},\rhd_{\alpha,\,\beta})_{\alpha,\,\beta\in \Omega} $ and $ (L',[\cdot,\cdot]'_{\alpha,\,\beta},\rhd'_{\alpha,\,\beta})_{\alpha,\,\beta\in \Omega} $ be two $\Omega$-PostLie algebras. A family of linear maps $ (f_{\alpha})_{\alpha\in \Omega}: L\rightarrow L' $ is called an $\mathbf{\Omega}$-$\mathbf{PostLie \, algebra \, morphism}$ if \[f_{\alpha\,\beta}[x,y]_{\alpha,\,\beta}=[f_{\alpha}(x),f_{\beta}(y)]'_{\alpha,\,\beta},\]
	\[f_{\alpha\,\beta}(x\rhd_{\alpha,\,\beta}y)=f_{\alpha}(x)\rhd'_{\alpha,\,\beta}f_{\beta}(y),\]
	for all $ x,y\in L,\,\alpha,\,\beta\in \Omega. $
\end{defn}

\begin{remark}\label{Post-pre(-Lie)}
	Let $ (L,[\cdot,\cdot]_{\alpha,\,\beta},\rhd_{\alpha,\,\beta})_{\alpha,\,\beta\in \Omega} $ be an $\Omega$-PostLie algebra.
		\begin{enumerate}
		\item If $ [x,y]_{\alpha,\,\beta}=0,$ for all $x,y\in L,\;\alpha,\,\beta\in\Omega $, then we get \[x\rhd_{\alpha,\,\beta\,\gamma}(y\rhd_{\beta,\,\gamma}z)-(x\rhd_{\alpha,\,\beta}y)\rhd_{\alpha\,\beta,\,\gamma}z=y\rhd_{\beta,\,\alpha\,\gamma}(x\rhd_{\alpha,\,\gamma}z)-(y\rhd_{\beta,\,\alpha}x)\rhd_{\beta\,\alpha,\,\gamma}z,\] for all $ x,y,z\in L,\,\alpha,\,\beta,\,\gamma \in \Omega,$ that is $ (L,\rhd_{\alpha,\,\beta})_{\alpha,\,\beta\in \Omega} $ is an $\Omega$-pre-Lie algebra.
		\item If $ x\rhd_{\alpha,\,\beta}y=0, $ for all $ x,y\in L,\;\alpha,\,\beta\in \Omega $, then $ (L,[\cdot,\cdot]_{\alpha,\,\beta})_{\alpha,\,\beta\in \Omega} $ is an $\Omega$-Lie algebra.
	\end{enumerate}
\end{remark}

The concept of BiHom-PostLie algebras was given in \cite{BiHom-PostLie}. Now we introduce a more general version of Definition~\ref{Omega-PostLie}.
\begin{defn} \label{BiHom-Ome-PostLie}
	A $\mathbf{BiHom}$-$\mathbf{\Omega}$-$\mathbf{PostLie \, algebra}$ $(L, \{\cdot , \cdot \}_{\alpha,\,\beta}, \blacktriangleright_{\alpha,\,\beta}, p_{\alpha} , q_{\alpha} )_{\alpha,\,\beta\in \Omega}$ is a vector space $L$ equipped with two families of bilinear maps $\{\cdot , \cdot \}_{\alpha,\,\beta} ,\,\blacktriangleright_{\alpha,\,\beta}: L\times L\rightarrow L$, and two commuting families of linear maps $(p_{\alpha})_{\alpha\in \Omega} , (q_{\alpha})_{\alpha\in \Omega} :L\rightarrow L$ such that $(L, \{\cdot , \cdot \}_{\alpha,\,\beta}, p_{\alpha} , q_{\alpha} )_{\alpha,\,\beta\in \Omega}$ is a BiHom-$\Omega$-Lie algebra and
	\[p _{\alpha\,\beta}(x\blacktriangleright_{\alpha ,\,\beta}y)=p_{\alpha} (x)\blacktriangleright_{\alpha ,\, \beta}p_{\beta} (y),\;\; q _{\alpha\,\beta}(x\blacktriangleright_{\alpha ,\, \beta}y)=q_{\alpha} (x)\blacktriangleright_{\alpha ,\, \beta}q_{\beta}(y) ,\]
		\begin{align*}
		\lbrace q_{\alpha}(x),p_{\beta}(y)\rbrace _{\alpha,\,\beta}\blacktriangleright_{\alpha\beta,\,\gamma}q_{\gamma}(z)
		=&p_{\alpha}q_{\alpha}(x)\blacktriangleright_{\alpha,\,\beta\gamma}(p_{\beta}(y)\blacktriangleright_{\beta,\,\gamma}z)-(q_{\alpha}(x)\blacktriangleright_{\alpha ,\, \beta}p_{\beta}(y))\blacktriangleright_{\alpha\beta,\,\gamma}q_{\gamma}(z)\\
		&-p_{\beta}q_{\beta}(y)\blacktriangleright_{\beta,\,\alpha\gamma}(p_{\alpha}(x)\blacktriangleright_{\alpha,\gamma}z)+(q_{\beta}(y)\blacktriangleright_{\beta,\,\alpha}p_{\alpha}(x))\blacktriangleright_{\beta\alpha,\gamma}q_{\gamma}(z), 
	\end{align*}
\begin{align*}
		\quad p_{\alpha}q_{\alpha} (x)\blacktriangleright_{\alpha,\,\beta\gamma}\{y, z\}_{\beta,\,\gamma}=&\{q_{\alpha}(x)\blacktriangleright_{\alpha ,\, \beta} y, q_{\gamma}(z)\}_{\alpha\beta,\,\gamma}+\{q_{\beta} (y), p_{\alpha} (x)\blacktriangleright_{\alpha,\gamma} z\}_{\beta,\,\alpha\gamma},
	\end{align*}
for all $x, y, z\in L,\,\alpha,\,\beta,\,\gamma\in \Omega$. The maps $p_{\alpha},q_{\alpha}$ (in this order) are called the structure maps of $L$.
\end{defn}

\begin{remark}\label{PostLie-Lie}
\begin{enumerate}
\item \label{item:prelie} If $ \{x,y\}_{\alpha,\,\beta}=0,$ for all $ x,y\in L,\;\alpha,\,\beta\in \Omega $, then $ (L,\blacktriangleright_{\alpha,\,\beta},p_{\alpha},q_{\alpha})_{\alpha,\,\beta\in \Omega} $ is a BiHom-$\Omega$-pre-Lie algebra.

\item \label{item:lie} If $ x\blacktriangleright_{\alpha,\,\beta}y=0, $ for all $ x,y\in L,\;\alpha,\,\beta\in \Omega $, then $ (L,\{\cdot,\cdot\}_{\alpha,\,\beta},p_{\alpha},q_{\alpha})_{\alpha,\,\beta\in \Omega} $ is a BiHom-$\Omega$-Lie algebra.
    \end{enumerate}
\end{remark}

Now we introduce the Yau twisting procedure for BiHom-$\Omega$-PostLie algebras and the proof is similar to Proposition \ref{Yautwist-preLie} and \ref{Ome-Lie-Yautwist}.

\begin{prop}
	Let $(L, [\cdot , \cdot ]_{\alpha,\,\beta}, \rhd_{\alpha,\,\beta} )_{\alpha,\,\beta\in \Omega}$ be an $\Omega$-PostLie algebra and let $p_{\alpha} , q_{\alpha} :L\rightarrow L$ be
	two commuting $\Omega$-PostLie algebra morphisms. Define two operations on $L$ by
	 \[\{x, y\}_{\alpha,\,\beta}:=[p_{\alpha}(x), q_{\beta} (y)]_{\alpha,\,\beta}\;\text{and} \; x\blacktriangleright_{\alpha ,\, \beta}y:=p_{\alpha}(x)\rhd_{\alpha,\,\beta} q_{\beta}(y),\]
	 for all $x, y\in L,\,\alpha,\,\beta\in\Omega$. Then $(L, \{\cdot , \cdot \}_{\alpha,\,\beta}, \blacktriangleright_{\alpha ,\,\beta}, p_{\alpha} , q_{\alpha})_{\alpha,\,\beta\in \Omega}$ is a BiHom-$\Omega$-PostLie algebra, called the Yau twist of  $(L, [\cdot , \cdot ]_{\alpha,\,\beta}, \rhd_{\alpha,\,\beta} )_{\alpha,\,\beta\in \Omega}$.
\end{prop}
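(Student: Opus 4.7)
The overall strategy is to verify the axioms of Definition~\ref{BiHom-Ome-PostLie} one at a time, leveraging the results already established. First, since $(L,[\cdot,\cdot]_{\alpha,\,\beta})_{\alpha,\,\beta\in\Omega}$ is an $\Omega$-Lie algebra and each $p_\alpha, q_\alpha$ is in particular an $\Omega$-Lie algebra morphism, Proposition~\ref{Ome-Lie-Yautwist} applies directly and gives that $(L,\{\cdot,\cdot\}_{\alpha,\,\beta},p_\alpha,q_\alpha)_{\alpha,\,\beta\in\Omega}$ is a BiHom-$\Omega$-Lie algebra. This disposes of the BiHom-$\Omega$-Lie substructure and of the BiHom-$\Omega$-skew-symmetry and BiHom-$\Omega$-Jacobi conditions for $\{\cdot,\cdot\}$.

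Next, the multiplicativity of $p_{\alpha\beta}$ and $q_{\alpha\beta}$ with respect to $\blacktriangleright_{\alpha,\beta}$ is a routine computation identical in form to the one performed inside the proof of Proposition~\ref{Yautwist-preLie}: one expands $p_{\alpha\beta}(x\blacktriangleright_{\alpha,\beta}y)=p_{\alpha\beta}(p_\alpha(x)\rhd_{\alpha,\beta}q_\beta(y))$, uses the $\Omega$-morphism property of $p_{\alpha\beta}$ (an easy consequence of $p_\alpha$ being a PostLie morphism), and commutes $p_\beta$ past $q_\beta$ to recognize the result as $p_\alpha(x)\blacktriangleright_{\alpha,\beta}p_\beta(y)$.

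For the first compatibility of BiHom-$\Omega$-PostLie, I plan to evaluate each side of the identity after substituting the twisted operations. The right-hand side becomes a difference of four terms of the form $p_\alpha^?q_\alpha^?(\cdot)\rhd(\cdot\rhd\cdot)$ and $(\cdot\rhd\cdot)\rhd q_\gamma^?(\cdot)$; using that $p_\alpha, q_\alpha$ commute and that both are $\Omega$-PostLie morphisms, each term can be rewritten until it matches (up to relabeling of $p_\alpha,q_\alpha$ appearing on arguments) a term arising from the first axiom of Definition~\ref{Omega-PostLie} applied to suitable twisted arguments. The left-hand side $\{q_\alpha(x),p_\beta(y)\}_{\alpha,\beta}\blacktriangleright_{\alpha\beta,\gamma}q_\gamma(z)$ expands to $p_{\alpha\beta}([p_\alpha q_\alpha(x),q_\beta p_\beta(y)]_{\alpha,\beta})\rhd_{\alpha\beta,\gamma}q_\gamma^2(z)$, which after pushing $p_{\alpha\beta}$ inside the bracket matches exactly the corresponding side of the $\Omega$-PostLie axiom. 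The second compatibility $p_\alpha q_\alpha(x)\blacktriangleright_{\alpha,\beta\gamma}\{y,z\}_{\beta,\gamma}=\{q_\alpha(x)\blacktriangleright_{\alpha,\beta}y,q_\gamma(z)\}_{\alpha\beta,\gamma}+\{q_\beta(y),p_\alpha(x)\blacktriangleright_{\alpha,\gamma}z\}_{\beta,\alpha\gamma}$ is handled analogously: expand both sides in terms of $\rhd$ and $[\cdot,\cdot]$, push the structure maps $p_?, q_?$ through using that they are $\Omega$-PostLie morphisms, use commutativity of $p_\alpha$ and $q_\alpha$ to align the decorations, and recognize the resulting equality as the second axiom of Definition~\ref{Omega-PostLie} applied to $p_\alpha^2q_\alpha(x)$, $p_\beta q_\beta^2(y)$, $p_\gamma^2 q_\gamma(z)$ (or an analogous choice of powers).

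The main obstacle is purely bookkeeping: matching the exact powers of $p_\alpha$ and $q_\alpha$ that decorate each argument after the expansion, and correctly invoking the morphism property of $p_{\alpha\beta}$ (resp.\ $q_{\alpha\beta}$) together with $p_\alpha q_\alpha=q_\alpha p_\alpha$. No new idea beyond the patterns already illustrated in Proposition~\ref{Yautwist-preLie} and Proposition~\ref{Ome-Lie-Yautwist} is needed; the verification is longer only because the PostLie axiom mixes both operations $\rhd$ and $[\cdot,\cdot]$ on the same side, so one must carry the twisting through both families of operations simultaneously.
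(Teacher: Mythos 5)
Your proposal is correct and follows exactly the route the paper intends: the paper itself only remarks that the proof is ``similar to Proposition~\ref{Yautwist-preLie} and~\ref{Ome-Lie-Yautwist}'', and your plan of invoking Proposition~\ref{Ome-Lie-Yautwist} for the bracket part and repeating the pre-Lie-twist computation for $\blacktriangleright$ and the two compatibility identities is precisely that. The bookkeeping does close up as you predict (for instance, the first compatibility reduces to the $\Omega$-PostLie axiom evaluated at $p_{\alpha}^{2}q_{\alpha}(x)$, $p_{\beta}^{2}q_{\beta}(y)$, $q_{\gamma}^{2}(z)$, and the second at $p_{\alpha}^{2}q_{\alpha}(x)$, $p_{\beta}q_{\beta}(y)$, $q_{\gamma}^{2}(z)$), so the hedged powers you list are the only detail to adjust.
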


Now we give a way to construct a BiHom-$\Omega$-Lie algebra by defining a new operation on the BiHom-$\Omega$-PostLie algebra.

\begin{prop}\label{interm}
	Let $(L, \{\cdot , \cdot \}_{\alpha,\,\beta}, \blacktriangleright_{\alpha ,\, \beta}, p_{\alpha}, q_{\alpha})_{\alpha,\,\beta\in \Omega}$ be a BiHom-$\Omega$-PostLie algebra. If $p_{\alpha},\,q_{\alpha} $ are
	bijective. Define a new multiplication on $L$  by
	\[<x, y>_{\alpha,\,\beta}:=x\blacktriangleright_{\alpha , \,\beta}y-(p_{\beta} ^{-1}q_{\beta}(y))\blacktriangleright_{\beta,\,\alpha}(p_{\alpha} q_{\alpha}^{-1}(x))+\{x, y\}_{\alpha,\,\beta},\]
	for all $x, y\in L,\,\alpha,\,\beta\in \Omega,$	then $(L, <\cdot , \cdot >_{\alpha,\,\beta}, p_{\alpha}, q_{\alpha} )_{\alpha,\,\beta\in \Omega}$ is a BiHom-$\Omega$-Lie algebra.
\end{prop}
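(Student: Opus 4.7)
The plan is to verify, in order, the three axioms defining a BiHom-$\Omega$-Lie algebra for the new bracket $<\cdot,\cdot>_{\alpha,\beta}$: multiplicativity of $p_\alpha$ and $q_\alpha$, BiHom-$\Omega$-skew-symmetry, and the BiHom-$\Omega$-Jacobi condition. It will be convenient to write $<x,y>_{\alpha,\beta}=L_{\alpha,\beta}(x,y)+\{x,y\}_{\alpha,\beta}$, where
\[L_{\alpha,\beta}(x,y):=x\blacktriangleright_{\alpha,\beta}y-(p_\beta^{-1}q_\beta(y))\blacktriangleright_{\beta,\alpha}(p_\alpha q_\alpha^{-1}(x)).\]
Multiplicativity then follows from a routine termwise check: since $p$ and $q$ are structure maps for both $\blacktriangleright$ and $\{\cdot,\cdot\}$ and commute with their own inverses, the property $p_{\alpha\beta}<x,y>_{\alpha,\beta}=<p_\alpha(x),p_\beta(y)>_{\alpha,\beta}$ is inherited from each summand, and similarly for $q_{\alpha\beta}$.

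For BiHom-$\Omega$-skew-symmetry, I would substitute $q_\alpha(x)$ and $p_\beta(y)$ directly: the two $\blacktriangleright$-terms in $L_{\alpha,\beta}(q_\alpha(x),p_\beta(y))$ simplify via $p_\beta^{-1}q_\beta\circ p_\beta=q_\beta$ and $p_\alpha q_\alpha^{-1}\circ q_\alpha=p_\alpha$ to the manifestly antisymmetric combination $q_\alpha(x)\blacktriangleright_{\alpha,\beta} p_\beta(y)-q_\beta(y)\blacktriangleright_{\beta,\alpha} p_\alpha(x)$, while the remaining $\{q_\alpha(x),p_\beta(y)\}_{\alpha,\beta}$ is skew by the BiHom-$\Omega$-skew-symmetry of the underlying BiHom-$\Omega$-Lie bracket in Definition~\ref{BiHom-Ome-PostLie}.

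The main obstacle is the BiHom-$\Omega$-Jacobi condition. Here I would expand the cyclic Jacobi sum
\[\sum_{\circlearrowleft}<q_\alpha^2(x),<q_\beta(y),p_\gamma(z)>_{\beta,\gamma}>_{\alpha,\beta\gamma}\]
by bilinearity into three classes: (i) pure $L$-$L$ terms, (ii) cross terms pairing $L$ with $\{\cdot,\cdot\}$ (either with $L$ outside and $\{\cdot,\cdot\}$ inside, or vice versa), and (iii) pure $\{\cdot,\cdot\}$-$\{\cdot,\cdot\}$ terms. Class (iii) vanishes by the BiHom-$\Omega$-Jacobi condition for $\{\cdot,\cdot\}$. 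In class (i), fully expanding $L$ twice produces exactly the four pre-Lie-type associators whose alternating sum appears on the right-hand side of the first BiHom-$\Omega$-PostLie axiom; applying that axiom rewrites each such combination as $\{q_\alpha(x),p_\beta(y)\}_{\alpha,\beta}\blacktriangleright_{\alpha\beta,\gamma}q_\gamma(z)$ together with its cyclic rotations, thereby moving the contribution of (i) into class (ii). The resulting total in class (ii) is then matched, term by term, against the $L$-applied-to-$\{\cdot,\cdot\}$ and $\{\cdot,\cdot\}$-applied-to-$L$ terms via the second BiHom-$\Omega$-PostLie axiom, combined with repeated use of the BiHom-$\Omega$-skew-symmetries and commutativity relations among $p_\alpha,q_\alpha,p_\alpha^{-1},q_\alpha^{-1}$ to align subscripts. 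The genuine difficulty is the combinatorial bookkeeping: on the order of thirty individual terms carrying the twist factors $p_\alpha q_\alpha^{-1}$ and $p_\beta^{-1}q_\beta$ with cyclically rotating subscripts must be grouped into matching cancellations. Two useful sanity checks guide the assembly: setting $\{\cdot,\cdot\}\equiv 0$ recovers Proposition~\ref{pLf-to-OmegaLie} (handling class (i) on its own), and setting $\blacktriangleright\equiv 0$ recovers the BiHom-$\Omega$-Jacobi of $\{\cdot,\cdot\}$ (handling class (iii)); hence the nontrivial new cancellation lives entirely in class (ii) and is dictated by the two PostLie compatibility axioms of Definition~\ref{BiHom-Ome-PostLie}.
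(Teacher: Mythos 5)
Your plan is correct and follows essentially the same route as the paper's proof: multiplicativity and skew-symmetry by direct substitution using $p_\beta^{-1}q_\beta p_\beta=q_\beta$ and $p_\alpha q_\alpha^{-1}q_\alpha=p_\alpha$, and the Jacobi identity by a full expansion in which the pure $\blacktriangleright$-associator combinations are absorbed via the first PostLie compatibility axiom, the mixed terms cancel via the second axiom together with skew-symmetry and the commutation of the structure maps, and the pure bracket terms vanish by the BiHom-$\Omega$-Jacobi identity of $\{\cdot,\cdot\}_{\alpha,\beta}$. The paper carries out exactly this term-by-term bookkeeping without naming the three classes, so your decomposition is an accurate description of its argument.
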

\begin{proof}
	For all $ x, y, z \in L,\,\alpha,\,\beta,\,\gamma\in \Omega,$ owing to the commutativity, we get $ p_{\alpha}\circ q_{\alpha}=q_{\alpha}\circ p_{\alpha}$ and we have \[p_{\alpha\,\beta}<x,y>_{\alpha,\,\beta}=<p_{\alpha}(x),p_{\beta}(y)>_{\alpha,\,\beta},\quad q_{\alpha\,\beta}<x,y>_{\alpha,\,\beta}=<q_{\alpha}(x),q_{\beta}(y)>_{\alpha,\,\beta}.\]
	First, we prove the BiHom-$\Omega$-skew-symmetry, we have
	\begin{align*}
		<q_{\alpha}(x),p_{\beta}(y)>_{\alpha,\,\beta}=&q_{\alpha}(x)\blacktriangleright_{\alpha ,\,\beta}p_{\beta}(y)-p_{\beta}^{-1}q_{\beta}(p_{\beta}(y))\blacktriangleright_{\beta,\,\alpha}p_{\alpha}q_{\alpha}^{-1}q_{\alpha}(x)+\lbrace q_{\alpha}(x),p_{\beta}(y)\rbrace_{\alpha,\,\beta}\\
		=&q_{\alpha}(x)\blacktriangleright_{\alpha,\,\beta}p_{\beta}(y)-q_{\beta}(y)\blacktriangleright_{\beta,\,\alpha}p_{\alpha}(x)+\lbrace q_{\alpha}(x),p_{\beta}(y)\rbrace_{\alpha,\,\beta}\\
		=&-(q_{\beta}(y)\blacktriangleright_{\beta,\,\alpha}p_{\alpha}(x)-q_{\alpha}(x)\blacktriangleright_{\alpha,\,\beta}p_{\beta}(y)+\lbrace q_{\beta}(y),p_{\alpha}(x)\rbrace_{\beta,\,\alpha})\\
		&\hspace{1cm} \text{(by Eq.~(\ref{BHO-skew})}\\
		=&-(q_{\beta}(y)\blacktriangleright_{\beta,\,\alpha}p_{\alpha}(x)-p_{\alpha}^{-1}q_{\alpha}p_{\alpha}(x)\blacktriangleright_{\alpha,\,\beta}p_{\beta}q_{\beta}^{-1}q_{\beta}(y)+\lbrace q_{\beta}(y),p_{\alpha}(x)\rbrace_{\beta,\,\alpha})\\
		=&-<q_{\beta}(y),p_{\alpha}(x)>_{\beta,\,\alpha}.
	\end{align*}
	Next, we prove the BiHom-$\Omega$-Jacobi condition, we have
	\begin{align*}
		<&q_{\alpha}^{2}(x),<q_{\beta}(y),p_{\gamma}(z)>_{\beta,\,\gamma}>_{\alpha,\,\beta\,\gamma}+<q_{\beta}^{2}(y),<q_{\gamma}(z),p_{\alpha}(x)>_{\gamma,\,\alpha}>_{\beta,\,\gamma\,\alpha}\\
		&+<q_{\gamma}^{2}(z),<q_{\alpha}(x),p_{\beta}(y)>_{\alpha,\,\beta}>_{\gamma,\,\alpha\,\beta}\\
		=&<q_{\alpha}^{2}(x),q_{\beta}(y)\blacktriangleright_{\beta,\,\gamma}p_{\gamma}(z)>_{\alpha,\,\beta\,\gamma}-<q_{\alpha}^{2}(x),q_{\gamma}(z)\blacktriangleright_{\gamma,\,\beta}p_{\beta}(y)>_{\alpha,\,\beta\,\gamma}\\
		&+<q_{\alpha}^{2}(x),\lbrace q_{\beta}(y),p_{\gamma}(z)\rbrace_{\beta,\,\gamma}>_{\alpha,\,\beta\,\gamma}+<q_{\beta}^{2}(y),q_{\gamma}(z)\blacktriangleright_{\gamma,\,\alpha}p_{\alpha}(x)>_{\beta,\,\gamma\,\alpha}\\
		&-<q_{\beta}^{2}(y),q_{\alpha}(x)\blacktriangleright_{\alpha,\,\gamma}p_{\gamma}(z)>_{\beta,\,\gamma\,\alpha}+<q_{\beta}^{2}(y),\lbrace q_{\gamma}(z),p_{\alpha}(x)\rbrace_{\gamma,\,\alpha}>_{\beta,\,\gamma\,\alpha}\\
		&+<q_{\gamma}^{2}(z),q_{\alpha}(x)\blacktriangleright_{\alpha,\,\beta}p_{\beta}(y)>_{\gamma,\,\alpha\,\beta}-<q_{\gamma}^{2}(z),q_{\beta}(y)\blacktriangleright_{\beta,\,\alpha}p_{\alpha}(x)>_{\gamma,\,\alpha\,\beta}\\
		&+<q_{\gamma}^{2}(z),\lbrace q_{\alpha}(x),p_{\beta}(y)\rbrace_{\alpha,\,\beta}>_{\gamma,\,\alpha\,\beta}\\
		=&q_{\alpha}^{2}(x)\blacktriangleright_{\alpha,\,\beta\gamma}(q_{\beta}(y)\blacktriangleright_{\beta,\,\gamma}p_{\gamma}(z))+\lbrace q_{\alpha}^{2}(x),q_{\beta}(y)\blacktriangleright_{\beta,\,\gamma}p_{\gamma}(z)\rbrace_{\alpha,\,\beta\,\gamma}-q_{\alpha}^{2}(x)\blacktriangleright_{\alpha,\,\beta\gamma}(q_{\gamma}(z)\blacktriangleright_{\gamma,\,\beta}p_{\beta}(y))\\
		&-\lbrace q_{\alpha}^{2}(x),q_{\gamma}(z)\blacktriangleright_{\gamma,\,\beta}p_{\gamma}(z)\rbrace_{\alpha,\,\beta\,\gamma}+q_{\alpha}^{2}(x)\blacktriangleright_{\alpha,\,\beta\gamma}\lbrace q_{\beta}(y),p_{\gamma}(z)\rbrace_{\beta,\gamma}+\lbrace q_{\alpha}^{2}(x),\lbrace q_{\beta}(y),p_{\gamma}(z)\rbrace_{\beta,\,\gamma}\rbrace_{\alpha,\,\beta\gamma}\\
		&+q_{\beta}^{2}(y)\blacktriangleright_{\beta,\,\gamma\,\alpha}(q_{\gamma}(z)\blacktriangleright_{\gamma,\,\alpha}p_{\alpha}(x))+\lbrace q_{\beta}^{2}(y),q_{\gamma}(z)\blacktriangleright_{\gamma,\,\alpha}p_{\alpha}(x)\rbrace_{\beta,\,\gamma\,\alpha}-q_{\beta}^{2}(y)\blacktriangleright_{\beta,\,\gamma\,\alpha}(q_{\alpha}(x)\blacktriangleright_{\alpha,\,\gamma}p_{\gamma}(z))\\
		&-\lbrace q_{\beta}^{2}(y),q_{\alpha}(x)\blacktriangleright_{\alpha,\,\gamma}p_{\gamma}(z)\rbrace_{\beta,\,\gamma\,\alpha}+q_{\beta}^{2}(y)\blacktriangleright_{\beta,\,\gamma\,\alpha}\lbrace q_{\gamma}(z),p_{\alpha}(x)\rbrace_{\gamma,\,\alpha}+\lbrace q_{\beta}^{2}(y),\lbrace q_{\gamma}(z),p_{\alpha}(x)\rbrace_{\gamma,\,\alpha}\rbrace_{\beta,\,\gamma\,\alpha}\\
		&+q_{\gamma}^{2}(z)\blacktriangleright_{\gamma,\,\alpha\,\beta}(q_{\alpha}(x)\blacktriangleright_{\alpha ,\,\beta}p_{\beta}(y))+\lbrace q_{\gamma}^{2}(z),q_{\alpha}(x)\blacktriangleright_{\alpha,\,\beta}p_{\beta}(y)\rbrace_{\gamma,\,\alpha\,\beta}-q_{\gamma}^{2}(z)\blacktriangleright_{\gamma,\,\alpha\,\beta}(q_{\beta}(y)\blacktriangleright_{\beta,\,\alpha}p_{\alpha}(x))\\
		&-\lbrace q_{\gamma}^{2}(z),q_{\beta}(y)\blacktriangleright_{\beta,\,\alpha}p_{\alpha}(x)\rbrace_{\gamma,\,\beta\,\alpha}+q_{\gamma}^{2}(z)\blacktriangleright_{\gamma,\,\alpha\,\beta}\lbrace q_{\alpha}(x),p_{\beta}(y)\rbrace_{\alpha,\,\beta}+\lbrace q_{\gamma}^{2}(z),\lbrace q_{\alpha}(x),p_{\beta}(y)\rbrace_{\alpha,\,\beta}\rbrace_{\gamma,\,\alpha\,\beta}\\
		&-p_{\beta\,\gamma}^{-1}q_{\beta\,\gamma}(q_{\beta}(y)\blacktriangleright_{\beta,\,\gamma}p_{\gamma}(z))\blacktriangleright_{\beta\,\gamma,\,\alpha}p_{\alpha}q_{\alpha}^{-1}q_{\alpha}^{2}(x)+p_{\gamma\,\beta}^{-1}q_{\gamma\,\beta}(q_{\gamma}(z)\blacktriangleright_{\gamma,\,\beta}p_{\beta}(y))\blacktriangleright_{\gamma\,\beta,\,\alpha}p_{\alpha}q_{\alpha}(x)\\
		&-p_{\beta\,\gamma}^{-1}q_{\beta\,\gamma}\lbrace q_{\beta}(y),p_{\gamma}(z)\rbrace_{\beta,\,\gamma}\blacktriangleright_{\beta\,\gamma,\,\alpha}p_{\alpha}q_{\alpha}(x)-p_{\gamma\,\alpha}^{-1}q_{\gamma\,\alpha}(q_{\gamma}(z)\blacktriangleright_{\gamma,\,\alpha}p_{\alpha}(x))\blacktriangleright_{\gamma\,\alpha,\,\beta}p_{\beta}q_{\beta}(y)\\
		&+p_{\alpha\,\gamma}^{-1}q_{\alpha\,\gamma}(q_{\alpha}(x)\blacktriangleright_{\alpha,\,\gamma}p_{\gamma}(z))\blacktriangleright_{\alpha\,\gamma,\,\beta}p_{\beta}q_{\beta}(y)-p_{\gamma\,\alpha}^{-1}q_{\gamma\,\alpha}\lbrace q_{\gamma}(z),p_{\alpha}(x)\rbrace_{\gamma,\,\alpha}\blacktriangleright_{\gamma\,\alpha,\,\beta}p_{\beta}q_{\beta}(y)\\
		&-p_{\alpha\,\beta}^{-1}q_{\alpha\,\beta}(q_{\alpha}(x)\blacktriangleright_{\alpha,\,\beta}p_{\beta}(y))\blacktriangleright_{\alpha\,\beta,\,\gamma}p_{\gamma}q_{\gamma}(z)+p_{\beta\,\alpha}^{-1}q_{\beta\,\alpha}(q_{\beta}(y)\blacktriangleright_{\beta,\,\alpha}p_{\alpha}(x))\blacktriangleright_{\beta\,\alpha,\gamma}p_{\gamma}q_{\gamma}(z)\\
		&-p_{\alpha\,\beta}^{-1}q_{\alpha\,\beta}\lbrace q_{\alpha}(x),p_{\beta}(y)\rbrace_{\alpha,\,\beta}\blacktriangleright_{\alpha\,\beta,\,\gamma}p_{\gamma}q_{\gamma}(z)\\
		=&q_{\alpha}^{2}(x)\blacktriangleright_{\alpha,\,\beta\gamma}(q_{\beta}(y)\blacktriangleright_{\beta,\,\gamma}p_{\gamma}(z))-(p_{\beta}^{-1}q_{\beta}^{2}(y)\blacktriangleright_{\beta,\,\gamma}q_{\gamma}(z))\blacktriangleright_{\beta\,\gamma,\,\alpha}p_{\alpha}q_{\alpha}(x)+\lbrace q_{\alpha}^{2}(x),q_{\beta}(y)\blacktriangleright_{\beta,\,\gamma}p_{\gamma}(z)\rbrace_{\alpha,\,\beta\,\gamma}\\
		&-q_{\alpha}^{2}(x)\blacktriangleright_{\alpha,\,\beta\gamma}(q_{\gamma}(z)\blacktriangleright_{\gamma,\,\beta}p_{\beta}(y))+(p_{\gamma}^{-1}q_{\gamma}^{2}(z)\blacktriangleright_{\gamma,\,\beta}q_{\beta}(y))\blacktriangleright_{\gamma\,\beta,\,\alpha}p_{\alpha}q_{\alpha}(x)-\lbrace q_{\alpha}^{2}(x),q_{\gamma}(z)\blacktriangleright_{\gamma,\,\beta}p_{\beta}(y)\rbrace_{\alpha,\,\beta\,\gamma}\\
		&+q_{\alpha}^{2}(x)\blacktriangleright_{\alpha,\,\beta\gamma}\lbrace q_{\beta}(y),p_{\gamma}(z)\rbrace_{\beta,\,\gamma}-\lbrace p_{\beta}^{-1}q_{\beta}^{2}(y),q_{\gamma}(z)\rbrace_{\beta,\,\gamma}\blacktriangleright_{\beta\,\gamma,\,\alpha}p_{\alpha}q_{\alpha}(x)+\lbrace q_{\alpha}^{2}(x),\lbrace q_{\beta}(y),p_{\gamma}(z)\rbrace_{\beta,\,\gamma}\rbrace_{\alpha,\,\beta\,\gamma}\\
		&+q_{\beta}^{2}(y)\blacktriangleright_{\beta,\,\gamma\,\alpha}(q_{\gamma}(z)\blacktriangleright_{\gamma,\,\alpha}p_{\alpha}(x))-(p_{\gamma}^{-1}q_{\gamma}^{2}(z)\blacktriangleright_{\gamma,\,\alpha}q_{\alpha}(x))\blacktriangleright_{\gamma\,\alpha,\,\beta}p_{\beta}q_{\beta}(y)+\lbrace q_{\beta}^{2}(y),q_{\gamma}(z)\blacktriangleright_{\gamma,\,\alpha}p_{\alpha}(x)\rbrace_{\beta,\,\gamma\,\alpha}\\
		&-q_{\beta}^{2}(y)\blacktriangleright_{\beta,\,\gamma\,\alpha}(q_{\beta}(x)\blacktriangleright_{\alpha,\,\gamma}p_{\gamma}(z))+(p_{\alpha}^{-1}q_{\alpha}^{2}(x)\blacktriangleright_{\alpha,\,\gamma}q_{\gamma}(z))\blacktriangleright_{\alpha\,\gamma,\,\beta}p_{\beta}q_{\beta}(y)-\lbrace q_{\beta}^{2}(y),q_{\alpha}(x)\blacktriangleright_{\alpha,\,\gamma}p_{\gamma}(z)\rbrace_{\beta,\,\gamma\,\alpha}\\
		&+q_{\beta}^{2}(y)\blacktriangleright_{\beta,\,\gamma\,\alpha}\lbrace q_{\gamma}(z),p_{\alpha}(x)\rbrace_{\gamma,\,\alpha}-\lbrace p_{\gamma}^{-1}q_{\gamma}^{2}(z),q_{\alpha}(x)\rbrace_{\gamma,\,\alpha}\blacktriangleright_{\gamma\,\alpha,\,\beta}p_{\beta}q_{\beta}(y)+\lbrace q_{\beta}^{2}(y),\lbrace q_{\gamma}(z),p_{\alpha}(x)\rbrace_{\gamma,\,\alpha}\rbrace_{\beta,\,\gamma\,\alpha}\\
		&+q_{\gamma}^{2}(z)\blacktriangleright_{\gamma,\,\alpha\,\beta}(q_{\alpha}(x)\blacktriangleright_{\alpha,\,\beta}p_{\beta}(y))-(p_{\alpha}^{-1}q_{\alpha}^{2}(x)\blacktriangleright_{\alpha,\,\beta}q_{\beta}(y))\blacktriangleright_{\alpha\,\beta,\,\gamma}p_{\gamma}q_{\gamma}(z)+\lbrace q_{\gamma}^{2}(z),q_{\alpha}(x)\blacktriangleright_{\alpha,\,\beta}p_{\beta}(y)\rbrace_{\gamma,\,\alpha\,\beta}\\
		&-q_{\gamma}^{2}(z)\blacktriangleright_{\gamma,\,\alpha\,\beta}(q_{\beta}(y)\blacktriangleright_{\beta,\,\alpha}p_{\alpha}(x))+(p_{\beta}^{-1}q_{\beta}^{2}(y)\blacktriangleright_{\beta,\,\alpha}q_{\alpha}(x))\blacktriangleright_{\beta\,\alpha,\,\gamma}p_{\gamma}q_{\gamma}(z)-\lbrace q_{\gamma}^{2}(z),q_{\beta}(y)\blacktriangleright_{\beta,\,\alpha}p_{\alpha}(x)\rbrace_{\gamma,\,\beta\,\alpha}\\
		&+q_{\gamma}^{2}(z)\blacktriangleright_{\gamma,\,\alpha\,\beta}\lbrace q_{\alpha}(x),p_{\beta}(y)\rbrace_{\alpha,\,\beta}-\lbrace p_{\alpha}^{-1}q_{\alpha}^{2}(x),q_{\beta}(y)\rbrace_{\alpha,\,\beta}\blacktriangleright_{\alpha\,\beta,\,\gamma}p_{\gamma}q_{\gamma}(z)+\lbrace q_{\gamma}^{2}(z),\lbrace q_{\alpha}(x),p_{\beta}(y)\rbrace_{\alpha,\,\beta}\rbrace_{\gamma,\,\alpha\,\beta}\\
		=&0.\hspace{1cm}\text{(by $(L, \{\cdot , \cdot \}_{\alpha,\,\beta}, \blacktriangleright_{\alpha ,\, \beta}, p_{\alpha}, q_{\alpha})_{\alpha,\,\beta\in \Omega}$ being a BiHom-$\Omega$-PostLie algebra)}
	\end{align*}
\end{proof}

Motivated by~\cite[Corollary 5.6]{bai}, we have the following result.
\begin{prop} \label{comgen}
	Let $(L, \lbrace \cdot , \cdot \rbrace _{\alpha,\,\beta}, p_{\alpha} , q_{\alpha} )_{\alpha,\,\beta\in \Omega}$ be a BiHom-$\Omega$-Lie algebra and let $(R_{\alpha})_{\alpha\in \Omega}$ be a Rota-Baxter family of weight $\lambda $ on $L$ such that
	\[ R_{\alpha}\circ p_{\alpha} =p_{\alpha} \circ R_{\alpha} \;\text{and} \; R_{\alpha}\circ q_{\alpha} =q_{\alpha}\circ R_{\alpha}.\]
  We define two operations
	on $L$ by
		\[\langle x, y\rangle_{\alpha,\,\beta}:=\lambda \lbrace x, y\rbrace _{\alpha,\,\beta} \;\text{and}\; x\blacktriangleright_{\alpha ,\, \beta}y:=\lbrace R_{\alpha}(x), y\rbrace _{\alpha,\,\beta},\]
for all $x, y\in L,\,\alpha,\,\beta\in \Omega.$	Then $(L, \langle\cdot , \cdot \rangle_{\alpha,\,\beta}, \blacktriangleright_{\alpha ,\, \beta}, p_{\alpha} , q_{\alpha})_{\alpha,\,\beta\in \Omega}$ is a BiHom-$\Omega$-PostLie algebra.
\end{prop}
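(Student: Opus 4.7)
The strategy is to verify, one by one, the four defining conditions of a BiHom-$\Omega$-PostLie algebra (Definition~\ref{BiHom-Ome-PostLie}) for the pair $(\langle\cdot,\cdot\rangle_{\alpha,\beta},\blacktriangleright_{\alpha,\beta})$: (i) $(L,\langle\cdot,\cdot\rangle,p,q)$ is itself a BiHom-$\Omega$-Lie algebra, (ii) multiplicativity of $p_\alpha,q_\alpha$ with respect to $\blacktriangleright_{\alpha,\beta}$, (iii) the first (``pre-Lie defect'') PostLie identity, and (iv) the second (``derivation'') PostLie identity.

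Items (i) and (ii) will be immediate. For (i), since $\langle x,y\rangle_{\alpha,\beta}=\lambda\{x,y\}_{\alpha,\beta}$ is a scalar rescaling of the original bracket, BiHom-$\Omega$-skew-symmetry~(\ref{BHO-skew}) is linear in the bracket and is preserved; each of the three summands of the BiHom-$\Omega$-Jacobi identity~(\ref{BHO-Jacobi}) is quadratic in the bracket, so all three acquire the same factor $\lambda^2$ and the identity is preserved; and the morphism conditions for $p_\alpha,q_\alpha$ transfer automatically by $\bfk$-linearity. For (ii), a short calculation
\[p_{\alpha\beta}(x\blacktriangleright_{\alpha,\beta}y)=p_{\alpha\beta}\{R_\alpha(x),y\}_{\alpha,\beta}=\{p_\alpha R_\alpha(x),p_\beta(y)\}_{\alpha,\beta}=\{R_\alpha p_\alpha(x),p_\beta(y)\}_{\alpha,\beta}=p_\alpha(x)\blacktriangleright_{\alpha,\beta}p_\beta(y)\]
uses that $p_\alpha$ is an $\Omega$-Lie morphism and commutes with $R_\alpha$; the $q$-case is identical.

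The substance of the proof lies in (iii). After substituting the definitions, the left-hand side of the first PostLie identity reads $\lambda\{R_{\alpha\beta}\{q_\alpha(x),p_\beta(y)\}_{\alpha,\beta},q_\gamma(z)\}_{\alpha\beta,\gamma}$, while each of the four right-hand terms becomes a nested bracket with $R_\mu$'s inserted. I would repeat the computation used in Theorem~\ref{lrprelie} for the weight-$0$ case: expand every $\blacktriangleright$ using its definition, apply the Rota-Baxter relation~(\ref{RB-Eq}) to each product $\{R_\alpha(u),R_\beta(v)\}$ that appears, and regroup using BiHom-$\Omega$-Jacobi and BiHom-$\Omega$-skew-symmetry together with the commutativity of $R$ with $p$ and $q$. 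In the weight-zero case the resulting combination vanishes; here, each application of~(\ref{RB-Eq}) generates an additional summand of the form $\lambda R_{\mu\nu}\{u,v\}$, and after all simplifications these residual $\lambda$-terms are left over, assembling into precisely $\lambda\{R_{\alpha\beta}\{q_\alpha(x),p_\beta(y)\}_{\alpha,\beta},q_\gamma(z)\}_{\alpha\beta,\gamma}$, matching the left-hand side. The main obstacle is the combinatorial bookkeeping: tracking of the order of nine summands with coefficients in $\{1,\lambda,\lambda^2\}$ and checking that the $\lambda^0$ and $\lambda^2$ parts cancel exactly as in the weight-zero proof.

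Item (iv) should be more direct. Substituting the definitions, dividing out the common factor of $\lambda$ (the $\lambda=0$ case being trivial, since both sides vanish), and setting $w:=R_\alpha(x)$ and commuting $R$ past $p,q$, the identity reduces to the ``derivation'' form
\[\{p_\alpha q_\alpha(w),\{y,z\}_{\beta,\gamma}\}_{\alpha,\beta\gamma}=\{\{q_\alpha(w),y\}_{\alpha,\beta},q_\gamma(z)\}_{\alpha\beta,\gamma}+\{q_\beta(y),\{p_\alpha(w),z\}_{\alpha,\gamma}\}_{\beta,\alpha\gamma}\]
of the Jacobi identity. This will follow from~(\ref{BHO-Jacobi}) applied to $w,y,z$ together with two applications of BiHom-$\Omega$-skew-symmetry~(\ref{BHO-skew}) on the outer brackets and the morphism property of $p_{\alpha\beta},q_{\alpha\beta}$, exactly along the lines of the closing manipulation in the proof of Theorem~\ref{lrprelie}; the commutativity $R\circ p=p\circ R$ and $R\circ q=q\circ R$ is invoked to shepherd $R$ past the twists throughout.
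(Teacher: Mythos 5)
Your proposal is correct and takes essentially the same route as the paper: the rescaled bracket is a BiHom-$\Omega$-Lie bracket for free, multiplicativity of $\blacktriangleright_{\alpha,\,\beta}$ is exactly the short computation you give, and the first PostLie identity is established by the same expansion the paper carries out — skew-symmetry to align the three $R_{\alpha\,\beta}$-terms, the weight-$\lambda$ Rota--Baxter relation~(\ref{RB-Eq}) to fuse $\lambda R_{\alpha\,\beta}\lbrace q_{\alpha}(x),p_{\beta}(y)\rbrace_{\alpha,\,\beta}+R_{\alpha\,\beta}\lbrace R_{\alpha}q_{\alpha}(x),p_{\beta}(y)\rbrace_{\alpha,\,\beta}+R_{\alpha\,\beta}\lbrace q_{\alpha}(x),R_{\beta}p_{\beta}(y)\rbrace_{\alpha,\,\beta}$ into $\lbrace R_{\alpha}q_{\alpha}(x),R_{\beta}p_{\beta}(y)\rbrace_{\alpha,\,\beta}$, then the BiHom-$\Omega$-Jacobi condition. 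The second identity is likewise reduced (both sides carrying an overall factor $\lambda$) to a derivation form of the Jacobi identity, which the paper also dispatches with a ``similarly''.
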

\begin{proof}
	 By $(L, \lbrace \cdot , \cdot \rbrace _{\alpha,\,\beta}, p_{\alpha} , q_{\alpha} )_{\alpha,\,\beta\in \Omega}$ being a BiHom-$\Omega$-Lie algebra, we get $(L,\langle\cdot,\cdot\rangle_{\alpha,\,\beta},p_{\alpha},q_{\alpha})_{\alpha,\,\beta\in \Omega} $ is a BiHom-$\Omega$-Lie algebra, where $ \langle x, y\rangle_{\alpha,\,\beta}:=\lambda \lbrace x, y\rbrace _{\alpha,\,\beta} $. For $ x,y,z\in L,\,\alpha,\,\beta,\,\gamma\in \Omega$, we have
	\begin{align*}
	    p_{\alpha\,\beta}(x\blacktriangleright_{\alpha,\,\beta}y)=&p_{\alpha\,\beta}(\lbrace R_{\alpha}(x),y\rbrace _{\alpha,\,\beta})=\lbrace p_{\alpha}R_{\alpha}(x),p_{\beta}(y)\rbrace _{\alpha,\,\beta}\\
	    =&\lbrace R_{\alpha}p_{\alpha}(x),p_{\beta}(y)\rbrace _{\alpha,\,\beta}\hspace{1cm}\text{(by $p_{\alpha}\circ R_{\alpha}=R_{\alpha}\circ p_{\alpha}$)}\\
	    =&p_{\alpha}(x)\blacktriangleright_{\alpha,\,\beta}p_{\beta}(y).
	\end{align*}
Similarly, we get $ q_{\alpha\,\beta}(x\blacktriangleright_{\alpha,\,\beta}y)=q_{\alpha}(x)\blacktriangleright_{\alpha,\,\beta}q_{\beta}(y). $ Next, we have
\begin{align*}
	\langle &q_{\alpha}(x),p_{\beta}(y)\rangle_{\alpha,\,\beta}\blacktriangleright_{\alpha\,\beta,\,\gamma}q_{\gamma}(z)+(q_{\alpha}(x)\blacktriangleright_{\alpha,\,\beta}p_{\beta}(y))\blacktriangleright_{\alpha\,\beta,\,\gamma}q_{\gamma}(z)+p_{\beta}q_{\beta}(y)\blacktriangleright_{\beta,\,\alpha\,\gamma}(p_{\alpha}(x)\blacktriangleright_{\alpha,\,\gamma}z)\\
	&-p_{\alpha}q_{\alpha}(x)\blacktriangleright_{\alpha,\,\beta\,\gamma}(p_{\beta}(y)\blacktriangleright_{\beta,\,\gamma}z)-(q_{\beta}(y)\blacktriangleright_{\beta,\,\alpha}p_{\alpha}(x))\blacktriangleright_{\beta\,\alpha,\,\gamma}q_{\gamma}(z)\\
	=&\lambda\lbrace R_{\alpha\,\beta}\lbrace q_{\alpha}(x),p_{\beta}(y)\rbrace _{\alpha,\,\beta},q_{\gamma}(z)\rbrace _{\alpha\,\beta,\,\gamma}+\lbrace R_{\alpha\,\beta}\lbrace R_{\alpha}q_{\alpha}(x),p_{\beta}(y)\rbrace _{\alpha,\,\beta},q_{\gamma}(z)\rbrace _{\alpha\,\beta,\,\gamma}\\
	&+\lbrace R_{\beta}p_{\beta}q_{\beta}(y),\lbrace R_{\alpha}p_{\alpha}(x),z\rbrace _{\alpha,\,\gamma}\rbrace _{\beta,\,\alpha\,\gamma}-\lbrace R_{\alpha}p_{\alpha}q_{\alpha}(x),\lbrace R_{\beta}p_{\beta}(y),z\rbrace _{\beta,\,\gamma}\rbrace _{\alpha,\,\beta\,\gamma}\\
	&-\lbrace R_{\beta\,\alpha}\lbrace R_{\beta}q_{\beta}(y),p_{\alpha}(x)\rbrace _{\beta,\,\alpha},q_{\gamma}(z)\rbrace _{\beta\,\alpha,\,\gamma}\\
	=&\lbrace \lambda R_{\alpha\,\beta}\lbrace q_{\alpha}(x),p_{\beta}(y)\rbrace _{\alpha,\,\beta}+R_{\alpha\,\beta}\lbrace R_{\alpha}q_{\alpha}(x),p_{\beta}(y)\rbrace _{\alpha,\,\beta}+R_{\alpha\,\beta}\lbrace q_{\alpha}(x),R_{\beta}p_{\beta}(y)\rbrace _{\alpha,\,\beta},q_{\gamma}(z)\rbrace _{\alpha\,\beta,\,\gamma}\\
	&+\lbrace R_{\beta}p_{\beta}q_{\beta}(y),\lbrace R_{\alpha}p_{\alpha}(x),z\rbrace _{\alpha,\,\gamma}\rbrace _{\beta,\,\alpha\,\gamma}-\lbrace R_{\alpha}p_{\alpha}q_{\alpha}(x),\lbrace R_{\beta}p_{\beta}(y),z\rbrace _{\beta,\,\gamma}\rbrace _{\alpha,\,\beta\,\gamma}\hspace{1cm}\text{(by Eq.~(\ref{BHO-skew}))}\\
	=&\lbrace \lbrace R_{\alpha}q_{\alpha}(x),R_{\beta}p_{\beta}(y)\rbrace _{\alpha,\,\beta},q_{\gamma}(z)\rbrace _{\alpha\,\beta,\,\gamma}+\lbrace R_{\beta}p_{\beta}q_{\beta}(y),\lbrace R_{\alpha}p_{\alpha}(x),z\rbrace _{\alpha,\,\gamma}\rbrace _{\beta,\,\alpha\,\gamma}\\
	&-\lbrace R_{\alpha}p_{\alpha}q_{\alpha}(x),\lbrace R_{\beta}p_{\beta}(y),z\rbrace _{\beta,\,\gamma}\rbrace _{\alpha,\,\beta\,\gamma}\hspace{1cm}\text{(by Eq.~(\ref{RB-Eq}))}\\
	=&\lbrace \lbrace q_{\alpha}R_{\alpha}(x),R_{\beta}p_{\beta}(y)\rbrace _{\alpha,\,\beta},q_{\gamma}(z)\rbrace _{\alpha\,\beta,\,\gamma}+\lbrace q_{\beta}R_{\beta}p_{\beta}(y),\lbrace p_{\alpha}R_{\alpha}(x),z\rbrace _{\alpha,\,\gamma}\rbrace _{\beta,\,\alpha\,\gamma}\\
	&-\lbrace p_{\alpha}q_{\alpha}R_{\alpha}(x),\lbrace R_{\beta}p_{\beta}(y),z\rbrace _{\beta,\,\gamma}\rbrace _{\alpha,\,\beta\,\gamma}\\
	=&\lbrace q_{\alpha\,\beta}q_{\alpha\,\beta}^{-1}\lbrace q_{\alpha}R_{\alpha}(x),R_{\beta}p_{\beta}(y)\rbrace _{\alpha,\,\beta},p_{\gamma}p_{\gamma}^{-1}q_{\gamma}(z)\rbrace _{\alpha\,\beta,\,\gamma}+\lbrace q_{\beta}R_{\beta}p_{\beta}(y),\lbrace q_{\alpha}q_{\alpha}^{-1}p_{\alpha}R_{\alpha}(x),p_{\gamma}p_{\gamma}^{-1}(z)\rbrace _{\alpha,\,\gamma}\rbrace _{\beta,\,\alpha\,\gamma}\\
	&-\lbrace p_{\alpha}q_{\alpha}R_{\alpha}(x),\lbrace R_{\beta}p_{\beta}(y),z\rbrace _{\beta,\,\gamma}\rbrace _{\alpha,\,\beta\,\gamma}\\
	=&-\lbrace q_{\gamma}p_{\gamma}^{-1}q_{\gamma}(z),p_{\alpha\,\beta}q_{\alpha\,\beta}^{-1}\lbrace q_{\alpha}R_{\alpha}(x),R_{\beta}p_{\beta}(y)\rbrace _{\alpha,\,\beta}\rbrace _{\gamma,\,\alpha\,\beta}-\lbrace q_{\beta}R_{\beta}p_{\beta}(y),\lbrace q_{\gamma}p_{\gamma}^{-1}(z),p_{\alpha}q_{\alpha}^{-1}p_{\alpha}R_{\alpha}(x)\rbrace _{\gamma,\,\alpha}\rbrace _{\beta,\,\gamma\,\alpha}\\
	&-\lbrace p_{\alpha}q_{\alpha}R_{\alpha}(x),\lbrace R_{\beta}p_{\beta}(y),z\rbrace _{\beta,\,\gamma}\rbrace _{\alpha,\,\beta\,\gamma}\hspace{1cm}\text{(by Eq.~(\ref{BHO-skew}))}\\
	=&-\lbrace q_{\alpha}^{2}q_{\alpha}^{-1}p_{\alpha}R_{\alpha}(x),\lbrace q_{\beta}q_{\beta}^{-1}R_{\beta}p_{\beta}(y),p_{\gamma}p_{\gamma}^{-1}(z)\rbrace _{\beta,\,\gamma}\rbrace _{\alpha,\,\beta\,\gamma}-\lbrace q_{\beta}^{2}q_{\beta}^{-1}R_{\beta}p_{\beta}(y),\lbrace q_{\gamma}p_{\gamma}^{-1}(z),p_{\alpha}q_{\alpha}^{-1}p_{\alpha}R_{\alpha}(x)\rbrace _{\gamma,\,\alpha}\rbrace _{\beta,\,\gamma\,\alpha}\\
	&-\lbrace q_{\gamma}^{2}p_{\gamma}^{-1}(z),\lbrace q_{\alpha}q_{\alpha}^{-1}p_{\alpha}R_{\alpha}(x),p_{\beta}q_{\beta}^{-1}R_{\beta}p_{\beta}(y)\rbrace _{\alpha,\,\beta}\rbrace _{\gamma,\,\alpha\,\beta}\hspace{1cm}\text{(by $p_{\alpha\,\beta}, q_{\alpha\,\beta}^{-1}$ satisfying Eq.~(\ref{Omega-Lie-morphism}))}\\
	=&0.\hspace{1cm}\text{(by Eq.~(\ref{BHO-Jacobi}))}
\end{align*}
Similarly, we have
\[ \langle q_{\alpha}(x)\blacktriangleright_{\alpha,\,\beta}y,q_{\gamma}(z)\rangle_{\alpha\,\beta,\,\gamma}+\langle q_{\beta}(y),p_{\alpha}(x)\blacktriangleright_{\alpha,\,\gamma}z\rangle_{\beta,\,\alpha\,\gamma}=p_{\alpha}q_{\alpha}(x)\blacktriangleright_{\alpha,\,\beta\,\gamma}\langle y,z\rangle_{\beta,\,\gamma}.\]
This completes the proof.
\end{proof}

\begin{remark}
If $\lambda =0$, then Proposition \ref{comgen} reduces to Theorem \ref{lrprelie}.
\end{remark}

\begin{remark}\label{relation}
	By the link among Theorem \ref{Proposition:BiHLie}, Proposition \ref{interm} and Proposition~\ref{comgen}, we have the following commutative diagram.
	
	\[\xymatrix{
		&\txt{BiHom-$\Omega$-Lie algebra\\$ (L,\lbrace \cdot,\cdot\rbrace_{\alpha,\,\beta},p_{\alpha},q_{\alpha})_{\alpha,\,\beta\in\Omega} $}\ar@<.5ex>[rr]^{\text{Proposition \ref{comgen}}}_{\langle x, y\rangle_{\alpha,\,\beta}:=\lambda \lbrace x, y\rbrace _{\alpha,\,\beta} \;\text{and}\; x\blacktriangleright_{\alpha ,\, \beta}y:=\lbrace R_{\alpha}(x), y\rbrace _{\alpha,\,\beta}}\ar@<.5ex>[rdd]^{\text{Theorem \ref{Proposition:BiHLie}}}_{<x,y>_{\alpha,\,\beta}:=\lbrace R_{\alpha}(x),y\rbrace _{\alpha,\,\beta}+\lbrace x,R_{\beta}(y)\rbrace _{\alpha,\,\beta}+\lambda \lbrace x,y\rbrace _{\alpha,\,\beta}\qquad\;\;}&\;& \txt{BiHom-$\Omega$-PostLie algebra\\$(L, \langle\cdot , \cdot \rangle_{\alpha,\,\beta}, \blacktriangleright_{\alpha ,\, \beta}, p_{\alpha} , q_{\alpha})_{\alpha,\,\beta\in \Omega}$}\ar@<.5ex>[ldd]_{\text{Proposition \ref{interm}}}^{\qquad\quad\;\;\;<x, y>_{\alpha,\,\beta}:=x\blacktriangleright_{\alpha , \,\beta}y-(p_{\beta} ^{-1}q_{\beta}(y))\blacktriangleright_{\beta,\,\alpha}(p_{\alpha} q_{\alpha}^{-1}(x))+\langle x, y\rangle_{\alpha,\,\beta}}\\
		\;& \;& \;\\
	\;&\;&\txt{BiHom-$\Omega$-Lie algebra\\$(L, <\cdot , \cdot >_{\alpha,\,\beta}, p_{\alpha}, q_{\alpha} )_{\alpha,\,\beta\in \Omega}.$}
	}\]\\
	More precisely, we have
	\begin{align*}
		<x, y>_{\alpha,\,\beta}=&x\blacktriangleright_{\alpha ,\, \beta}y-(p_{\beta}^{-1}q_{\beta}(y))\blacktriangleright_{\beta,\,\alpha}(p_{\alpha} q_{\alpha}^{-1}(x))+\langle x, y\rangle_{\alpha,\,\beta}\\
		=&\{R_{\alpha}(x), y\}_{\alpha,\,\beta}-\{R_{\beta}p_{\beta}^{-1}q_{\beta} (y), p_{\alpha}q_{\alpha} ^{-1}(x)\}_{\beta,\,\alpha}+\lambda \{x, y\}_{\alpha,\,\beta}\\
		=&\{R_{\alpha}(x), y\}_{\alpha,\,\beta}-\{q_{\beta}p_{\beta} ^{-1}R_{\beta}(y), p_{\alpha}q_{\alpha} ^{-1}(x)\}_{\beta,\,\alpha}+\lambda \{x, y\}_{\alpha,\,\beta}\\
		=&\{R_{\alpha}(x), y\}_{\alpha,\,\beta}+\{q_{\alpha}q_{\alpha}^{-1}(x), p_{\beta}p_{\beta}^{-1}R_{\beta}(y)\}_{\alpha,\,\beta}+\lambda \{x, y\}_{\alpha,\,\beta}\\
		&\hspace{1cm} \text{(by Eq.~(\ref{BHO-skew}))}\\
		=&\{R_{\alpha}(x), y\}_{\alpha,\,\beta}+\{x, R_{\beta}(y)\}_{\alpha,\,\beta}+\lambda \{x, y\}_{\alpha,\,\beta},
	\end{align*}
for all $ x,y\in L,\,\alpha,\,\beta\in \Omega. $
\end{remark}


\subsection{BiHom-$\Omega$-pre-Possion algebras}
In this subsection, we generlize the relationship between pre-Lie algebras and pre-Possion algebras to BiHom-$\Omega$ version.
First of all, let's recall the concepts of $\Omega$-zinbiel algebras and $\Omega$-pre-Possion algebras.

\begin{defn}\cite{Aguiar}
   	\begin{enumerate}
		\item An $\bf{\Omega}$-$\bf{zinbiel \, algebra}$ $ (Z,\ast_{\alpha,\,\beta})_{\alpha,\,\beta\in \Omega} $  is a vector space $Z$ equipped with a family of binary operations $ (\ast_{\alpha,\,\beta}: Z\times Z\rightarrow Z)_{\alpha,\,\beta\in \Omega} $ such that
		\begin{align}\label{Omega-zinbiel}
			x\ast_{\alpha,\,\beta\,\gamma}(y\ast_{\beta,\,\gamma}z)=(x\ast_{\alpha,\,\beta}y)\ast_{\alpha\,\beta,\,\gamma}z+(y\ast_{\beta,\,\alpha}x)\ast_{\beta\,\alpha,\,\gamma}z,
		\end{align}
for all $ x,y,z\in Z,\,\alpha,\,\beta,\,\gamma\in \Omega. $
\item An $\bf{\Omega}$-$\bf{pre}$-$\bf{Possion \, algebra}$ $ (B,\rhd_{\alpha,\,\beta},\ast_{\alpha,\,\beta})_{\alpha,\,\beta\in \Omega} $ is a vector space $B$ equipped with two families of bilinear operations $ \rhd_{\alpha,\,\beta},\ast_{\alpha,\,\beta}: B\times B\rightarrow B $ such that $ (B,\rhd_{\alpha,\,\beta})_{\alpha,\,\beta\in \Omega} $ is an $\Omega$-pre-Lie algebra, $ (B,\ast_{\alpha,\,\beta})_{\alpha,\,\beta\in \Omega} $ is an $\Omega$-zinbiel algebra and
\begin{align*}
	(x\rhd_{\alpha,\,\beta}y-y\rhd_{\beta,\,\alpha}x)\ast_{\alpha\,\beta,\,\gamma}z=x\rhd_{\alpha,\,\beta\,\gamma}(y\ast_{\beta,\,\gamma}z)-y\ast_{\beta,\,\alpha\,\gamma}(x\rhd_{\alpha,\,\gamma}z),\\
	(x\ast_{\alpha,\,\beta}y+y\ast_{\beta,\,\alpha}x)\rhd_{\alpha\,\beta,\,\gamma}z=x\ast_{\alpha,\,\beta\,\gamma}(y\rhd_{\beta,\,\gamma}z)+y\ast_{\beta,\,\alpha\,\gamma}(x\rhd_{\alpha,\,\gamma}z),
\end{align*}
for all $ x,y,z\in B,\,\alpha,\,\beta,\,\gamma\in \Omega. $
	\end{enumerate}
\end{defn}

\begin{defn}
	\begin{enumerate}
\item	Let $ (Z,\ast_{\alpha,\,\beta})_{\alpha,\,\beta\in \Omega} $ and $ (Z',\ast_{\alpha,\,\beta}')_{\alpha,\,\beta\in \Omega} $ be two $\Omega$-zinbiel algebras. A family of linear maps $ (f_{\alpha})_{\alpha\in \Omega} : Z\rightarrow Z'$ is called an $\bf{\Omega}$-$\bf{zinbiel \, algebra \, morphism}$ if
	\begin{align}\label{Omega-zinbiel-morphism}
		f_{\alpha\,\beta}(x\ast_{\alpha,\,\beta}y)=f_{\alpha}(x)\ast'_{\alpha,\,\beta}f_{\beta}(y),
	\end{align}
	for all $ x,y\in Z,\,\alpha,\,\beta\in \Omega. $
	\item Let $ (B,\rhd_{\alpha,\,\beta},\ast_{\alpha,\,\beta})_{\alpha,\,\beta\in \Omega} $ and $ (B',\rhd_{\alpha,\,\beta}',\ast_{\alpha,\,\beta}')_{\alpha,\,\beta\in \Omega} $ be two $\Omega$-pre-Possion algebras. A family of linear maps $(f_{\alpha})_{\alpha\in \Omega}: B\rightarrow B'$ is called an $\bf{\Omega}$-$\bf{pre}$-$\bf{Possion \, algebra \, morphism}$ if
	\[f_{\alpha\,\beta}(x\rhd_{\alpha,\,\beta}y)=f_{\alpha}(x)\rhd'_{\alpha,\,\beta}f_{\beta}(y),\]
	\[f_{\alpha\,\beta}(x\ast_{\alpha,\,\beta}y)=f_{\alpha}(x)\ast'_{\alpha,\,\beta}f_{\beta}(y),\]
	for all $ x,y\in B,\,\alpha,\,\beta\in \Omega. $
	\end{enumerate}	
\end{defn}
Below, we will generlize the above definitions to the BiHom version.

\begin{defn}
	A $\bf{BiHom}$-$\bf{\Omega}$-$\bf{zinbiel \, algebra} $ $ (Z,\circledast_{\alpha,\,\beta},p_{\alpha},q_{\alpha})_{\alpha,\,\beta\in \Omega} $ is a vector space $Z$ equipped with a family of binary operations $ (\circledast_{\alpha,\,\beta}: Z\times Z\rightarrow Z)_{\alpha,\,\beta\in \Omega} $ and two commuting $\Omega$-zinbiel algebra morphisms $p_{\alpha},q_{\alpha}: Z\rightarrow Z$ such that
	\begin{align*}
		p_{\alpha}q_{\alpha}(x)\circledast_{\alpha,\,\beta\,\gamma}(p_{\beta}(y)\circledast_{\beta,\,\gamma}z)=(q_{\alpha}(x)\circledast_{\alpha,\,\beta}p_{\beta}(y))\circledast_{\alpha\,\beta,\,\gamma}q_{\gamma}(z)+(q_{\beta}(y)\circledast_{\beta,\,\alpha}p_{\alpha}(x))\circledast_{\beta\,\alpha,\,\gamma}q_{\gamma}(z),
	\end{align*}
for all $ x,y,z\in Z,\,\alpha,\,\beta,\gamma\in \Omega. $ The maps $p_{\alpha}$ and $q_{\alpha}$ (in this order) are called the structure maps of $Z$.
\end{defn}

Combining BiHom-$\Omega$-zinbiel algebras and BiHom-$\Omega$-pre-Lie algebras, we propose the following definition.

\begin{defn}\label{pre-Possion}
	A $\bf{BiHom\text{-}}\bf{\Omega}\text{-}\bf{pre\text{-}Possion \, algebra}$ $ (B,\blacktriangleright_{\alpha,\,\beta},\circledast_{\alpha,\,\beta},p_{\alpha},q_{\alpha})_{\alpha,\,\beta\in \Omega} $ is a vector space $B$ equipped with two families of bilinear maps $ \blacktriangleright_{\alpha,\,\beta},\circledast_{\alpha,\,\beta}: B \times B\rightarrow B $ and two commuting families of linear maps $ (p_{\alpha})_{\alpha\in \Omega},(q_{\alpha})_{\alpha\in \Omega} : B \rightarrow B$ such that $ (B,\blacktriangleright_{\alpha,\,\beta},p_{\alpha},q_{\alpha})_{\alpha,\,\beta\in \Omega} $ is a BiHom-$\Omega$-pre-Lie algebra, $ (B,\circledast_{\alpha,\,\beta},p_{\alpha},q_{\alpha})_{\alpha,\,\beta\in \Omega} $ is a BiHom-$\Omega$-zinbiel algebra and

	$	(q_{\alpha}(x)\blacktriangleright_{\alpha,\,\beta}p_{\beta}(y)-q_{\beta}(y)\blacktriangleright_{\beta,\,\alpha}p_{\alpha}(x))\circledast_{\alpha\,\beta,\,\gamma}q_{\gamma}(z)$
		\begin{align*}
	=p_{\alpha}q_{\alpha}(x)\blacktriangleright_{\alpha,\,\beta\,\gamma}(p_{\beta}(y)\circledast_{\beta,\,\gamma}z)-p_{\beta}q_{\beta}(y)\circledast_{\beta,\,\alpha\,\gamma}(p_{\alpha}(x)\blacktriangleright_{\alpha,\,\gamma}z),
		\end{align*}
	$	(q_{\alpha}(x)\circledast_{\alpha,\,\beta}p_{\beta}(y)+q_{\beta}(y)\circledast_{\beta,\,\alpha}p_{\alpha}(x))\blacktriangleright_{\alpha\,\beta,\,\gamma}q_{\gamma}(z)$
	\begin{align*}
		=p_{\alpha}q_{\alpha}(x)\circledast_{\alpha,\,\beta\,\gamma}(p_{\beta}(y)\blacktriangleright_{\beta,\,\gamma}z)+p_{\beta}q_{\beta}(y)\circledast_{\beta,\,\alpha\,\gamma}(p_{\alpha}(x)\blacktriangleright_{\alpha,\,\gamma}z),
	\end{align*}
for all $ x,y,z\in B,\,\alpha,\,\beta,\,\gamma\in \Omega .$ The maps $p_{\alpha}$ and $q_{\alpha}$ (in this order) are called the structure maps of $B$.
\end{defn}

Next, we introduce the relationship between BiHom-$\Omega$-pre-Possion algebras and BiHom-$\Omega$-pre-Lie algebras.

\begin{remark}
	Let $ (B,\blacktriangleright_{\alpha,\,\beta},\circledast_{\alpha,\,\beta},p_{\alpha},q_{\alpha})_{\alpha,\,\beta\in \Omega} $ be a BiHom-$\Omega$-pre-Possion algebra.
	\begin{enumerate}
		\item If $ x\circledast_{\alpha,\,\beta}y=0, $ for all $ x,y\in B,\;\alpha,\,\beta\in \Omega$, then $ (B,\blacktriangleright_{\alpha,\,\beta},p_{\alpha},q_{\alpha})_{\alpha,\,\beta\in \Omega} $ is a BiHom-$\Omega$-pre-Lie algebra.
		\item If $ x\blacktriangleright_{\alpha,\,\beta}y=0, $ for all $ x,y\in B,\;\alpha,\,\beta\in \Omega$, then $ (B,\circledast_{\alpha,\,\beta},p_{\alpha},q_{\alpha})_{\alpha,\,\beta\in \Omega} $ is a BiHom-$\Omega$-zinbiel algebra.
	\end{enumerate}
\end{remark}

As usual, we characterize the Yau twisting procedure for BiHom-$\Omega$-zinbiel algebras as follows.

\begin{prop}\label{Yautwist-zinbiel}
	Let $(Z, \ast_{\alpha,\,\beta} )_{\alpha,\,\beta \in \Omega}$ be an $\Omega$-zinbiel algebra. If $p_{\alpha} , q_{\alpha} :Z\rightarrow Z$ are two commuting $\Omega$-zinbiel algebra morphisms and we define the multiplication on $Z$ by
	\[x\circledast_{\alpha,\,\beta}y:=p_{\alpha}(x)\ast_{\alpha,\,\beta} q_{\beta}(y), \quad \text{ for all }x, y\in Z,\,\alpha,\,\beta\in \Omega.\] Then $(Z, \circledast_{\alpha,\,\beta} , p_{\alpha} , q_{\alpha} )_{\alpha,\,\beta \in \Omega}$ is a BiHom-$\Omega$-zinbiel algebra,
	called the Yau twist of $(Z, \ast_{\alpha,\,\beta} )_{\alpha,\,\beta \in\Omega}$.
\end{prop}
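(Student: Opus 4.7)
The plan is to mimic the Yau-twist proofs already carried out for BiHom-$\Omega$-associative (Proposition~\ref{asso-Yautwist}), BiHom-$\Omega$-dendriform (Proposition~\ref{dendri-Yautwist}), and BiHom-$\Omega$-pre-Lie (Proposition~\ref{Yautwist-preLie}) algebras. Only three ingredients will be used: the fact that $p_{\alpha}$ and $q_{\alpha}$ are $\Omega$-zinbiel algebra morphisms (Eq.~(\ref{Omega-zinbiel-morphism})), the commutation $p_{\alpha}\circ q_{\alpha}=q_{\alpha}\circ p_{\alpha}$, and the $\Omega$-zinbiel identity (\ref{Omega-zinbiel}).

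First I would check multiplicativity of the structure maps with respect to $\circledast$. Unfolding
\[
p_{\alpha\,\beta}(x\circledast_{\alpha,\,\beta}y)=p_{\alpha\,\beta}\bigl(p_{\alpha}(x)\ast_{\alpha,\,\beta}q_{\beta}(y)\bigr),
\]
I apply Eq.~(\ref{Omega-zinbiel-morphism}) to split the outer $p_{\alpha\,\beta}$, then use $p_{\beta}\circ q_{\beta}=q_{\beta}\circ p_{\beta}$ to reassemble the result as $p_{\alpha}(x)\circledast_{\alpha,\,\beta}p_{\beta}(y)$. The analogous identity for $q_{\alpha\,\beta}$ is verified in the same way.

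Next I would verify the BiHom-$\Omega$-zinbiel axiom. Starting from the left-hand side, I expand
\[
p_{\alpha}q_{\alpha}(x)\circledast_{\alpha,\,\beta\,\gamma}\bigl(p_{\beta}(y)\circledast_{\beta,\,\gamma}z\bigr)
=p_{\alpha}^{2}q_{\alpha}(x)\ast_{\alpha,\,\beta\,\gamma}q_{\beta\,\gamma}\bigl(p_{\beta}^{2}(y)\ast_{\beta,\,\gamma}q_{\gamma}(z)\bigr),
\]
push $q_{\beta\,\gamma}$ inside via Eq.~(\ref{Omega-zinbiel-morphism}), commute the $p$'s and $q$'s past each other, and then apply the $\Omega$-zinbiel identity (\ref{Omega-zinbiel}) to obtain two terms of the shape
\[
(p_{\alpha}^{2}q_{\alpha}(x)\ast_{\alpha,\,\beta}p_{\beta}^{2}q_{\beta}(y))\ast_{\alpha\,\beta,\,\gamma}q_{\gamma}^{2}(z)+(p_{\beta}^{2}q_{\beta}(y)\ast_{\beta,\,\alpha}p_{\alpha}^{2}q_{\alpha}(x))\ast_{\beta\,\alpha,\,\gamma}q_{\gamma}^{2}(z).
\]
The right-hand side is treated by symmetric unfolding: each factor $(q_{\alpha}(x)\circledast_{\alpha,\,\beta}p_{\beta}(y))\circledast_{\alpha\,\beta,\,\gamma}q_{\gamma}(z)$ becomes $p_{\alpha\,\beta}(p_{\alpha}q_{\alpha}(x)\ast_{\alpha,\,\beta}q_{\beta}p_{\beta}(y))\ast_{\alpha\,\beta,\,\gamma}q_{\gamma}^{2}(z)$, and after applying Eq.~(\ref{Omega-zinbiel-morphism}) and using commutativity, it matches the first summand above. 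The second summand matches the second right-hand term analogously.

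No real obstacle is expected, since both sides reduce to the same expression purely by applying the morphism property of $p_{\alpha\,\beta},q_{\beta\,\gamma}$, the commuting relations, and a single application of the $\Omega$-zinbiel identity. The only bookkeeping nuisance is to keep track of the indices $\alpha,\beta,\gamma$ after the semigroup multiplications and to verify that the two terms on the right of the zinbiel identity pair up correctly with the two terms on the right of the BiHom-$\Omega$-zinbiel axiom, which in essence amounts to the same index pattern already seen in Proposition~\ref{asso-Yautwist}.
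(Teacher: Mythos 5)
Your proposal is correct and follows essentially the same route as the paper's proof: expand the left-hand side of the BiHom-$\Omega$-zinbiel axiom, push $q_{\beta\,\gamma}$ inside via Eq.~(\ref{Omega-zinbiel-morphism}), use the commutation of $p$ and $q$, apply the $\Omega$-zinbiel identity (\ref{Omega-zinbiel}) once, and reassemble the two resulting terms via the morphism property of $p_{\alpha\,\beta}$ and $p_{\beta\,\alpha}$ to match the right-hand side. The only difference is that you also verify multiplicativity of the structure maps explicitly, which the paper leaves implicit; this is a harmless and slightly more complete presentation of the same argument.
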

\begin{proof}
	For $ x,y,z\in Z,\;\,\alpha,\,\beta,\,\gamma\in \Omega, $ we have\\
$ \phantom{1111111}\;\;\, p_{\alpha}q_{\alpha}(x)\circledast_{\alpha,\,\beta\,\gamma}(p_{\beta}(y)\circledast_{\beta,\,\gamma}z) $
	\begin{align*}
		=&p_{\alpha}^{2}q_{\alpha}(x)\ast_{\alpha,\,\beta\,\gamma}q_{\beta\,\gamma}(p_{\beta}(y)\circledast_{\beta,\,\gamma}z)\\
		=&p_{\alpha}^{2}q_{\alpha}(x)\ast_{\alpha,\,\beta\,\gamma}q_{\beta\,\gamma}(p_{\beta}^{2}(y)\ast_{\beta,\,\gamma}q_{\gamma}(z))\\
		=&p_{\alpha}^{2}q_{\alpha}(x)\ast_{\alpha,\,\beta\,\gamma}(q_{\beta}p_{\beta}^{2}(y)\ast_{\beta,\,\gamma}q_{\gamma}^{2}(z))\hspace{1cm}\text{(by $q_{\beta\,\gamma}$ satisfying Eq.~(\ref{Omega-zinbiel-morphism})})\\
		=&(p_{\alpha}^{2}q_{\alpha}(x)\ast_{\alpha,\,\beta}q_{\beta}p_{\beta}^{2}(y))\ast_{\alpha\,\beta,\,\gamma}q_{\gamma}^{2}(z)+(q_{\beta}q_{\beta}^{2}(y)\ast_{\beta,\,\alpha}p_{\alpha}^{2}q_{\alpha}(x))\ast_{\beta\,\alpha,\,\gamma}q_{\gamma}^{2}(z)\\
		&\hspace{1cm}\text{(by Eq.~(\ref{Omega-zinbiel})})\\
		=&(p_{\alpha}^{2}q_{\alpha}(x)\ast_{\alpha,\,\beta}p_{\beta}q_{\beta}p_{\beta}(y))\ast_{\alpha\,\beta,\,\gamma}q_{\gamma}^{2}(z)+(p_{\beta}^{2}q_{\beta}(y)\ast_{\beta,\,\alpha}p_{\alpha}q_{\alpha}p_{\alpha}(x))\ast_{\beta\,\alpha,\,\gamma}q_{\gamma}^{2}(z)\\
		&\hspace{1cm}\text{(by $p_{\alpha},q_{\alpha}$ commuting with each other })\\
		=&p_{\beta\,\alpha}(p_{\alpha}q_{\alpha}(x)\ast_{\alpha,\,\beta}q_{\beta}p_{\beta}(y))\ast_{\alpha\,\beta,\,\gamma}q_{\gamma}^{2}(z)+p_{\beta\,\alpha}(p_{\beta}q_{\beta}(y)\ast_{\beta,\,\alpha}q_{\alpha}p_{\alpha}(x))\ast_{\beta\,\alpha,\,\gamma}q_{\gamma}^{2}(z)\\
		&\hspace{1cm}\text{(by $p_{\alpha\,\beta}$ satisfying Eq.~(\ref{Omega-zinbiel-morphism})})\\
		=&(q_{\alpha}(x)\circledast_{\alpha,\,\beta}p_{\beta}(y))\circledast_{\alpha\,\beta,\,\gamma}q_{\gamma}(z)+(q_{\beta}(y)\circledast_{\beta,\,\alpha}p_{\alpha}(x))\circledast_{\beta\,\alpha,\,\gamma}q_{\gamma}(z).
		\end{align*}
\end{proof}

The following result is the Yau twisting procedure for BiHom-$\Omega$-pre-Possion algebras and the proof is similar to Proposition~\ref{Yautwist-zinbiel}.
\begin{prop}
	Let $ (B,\rhd_{\alpha,\,\beta},\ast_{\alpha,\,\beta})_{\alpha,\,\beta\in \Omega} $ be an $\Omega$-pre-Possion algebra and let $ p_{\alpha},q_{\alpha}: B\rightarrow B $ be two commuting $\Omega$-pre-Possion algebra morphisms. Define two operations on $B$ by
	\[x\blacktriangleright_{\alpha,\,\beta}y:=p_{\alpha}(x)\rhd_{\alpha,\,\beta}q_{\beta}(y),\;\qquad x\circledast_{\alpha,\,\beta}y:=p_{\alpha}(x)\ast_{\alpha,\,\beta}q_{\beta}(y),\]
	for all $ x,y\in B,\,\alpha,\,\beta\in \Omega. $ Then $ (B,\blacktriangleright_{\alpha,\,\beta},\circledast_{\alpha,\,\beta},p_{\alpha},q_{\alpha})_{\alpha,\,\beta\in \Omega} $ is a BiHom-$\Omega$-pre-Possion algebra, called the Yau twist of $ (B,\rhd_{\alpha,\,\beta},\ast_{\alpha,\,\beta})_{\alpha,\,\beta\in \Omega}. $
\end{prop}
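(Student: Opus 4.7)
\medskip

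\noindent\textbf{Proof proposal.} The definition of a BiHom-$\Omega$-pre-Possion algebra (Definition~\ref{pre-Possion}) has three ingredients to verify: (i) $(B,\blacktriangleright_{\alpha,\,\beta},p_{\alpha},q_{\alpha})_{\alpha,\,\beta\in\Omega}$ is a BiHom-$\Omega$-pre-Lie algebra; (ii) $(B,\circledast_{\alpha,\,\beta},p_{\alpha},q_{\alpha})_{\alpha,\,\beta\in\Omega}$ is a BiHom-$\Omega$-zinbiel algebra; and (iii) the two mixed compatibility relations between $\blacktriangleright_{\alpha,\,\beta}$ and $\circledast_{\alpha,\,\beta}$. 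The plan is to dispose of (i) and (ii) instantly by invoking earlier Yau-twist results, and then to reduce (iii) to the classical $\Omega$-pre-Possion compatibility by shuffling the structure maps.

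For (i), since any $\Omega$-pre-Possion algebra morphism is in particular an $\Omega$-pre-Lie algebra morphism, $p_{\alpha}$ and $q_{\alpha}$ are two commuting $\Omega$-pre-Lie algebra morphisms on $(B,\rhd_{\alpha,\,\beta})_{\alpha,\,\beta\in\Omega}$. Hence Proposition~\ref{Yautwist-preLie} immediately yields that $(B,\blacktriangleright_{\alpha,\,\beta},p_{\alpha},q_{\alpha})_{\alpha,\,\beta\in\Omega}$ is a BiHom-$\Omega$-pre-Lie algebra. For (ii), the same maps are commuting $\Omega$-zinbiel algebra morphisms on $(B,\ast_{\alpha,\,\beta})_{\alpha,\,\beta\in\Omega}$, so Proposition~\ref{Yautwist-zinbiel} directly gives that $(B,\circledast_{\alpha,\,\beta},p_{\alpha},q_{\alpha})_{\alpha,\,\beta\in\Omega}$ is a BiHom-$\Omega$-zinbiel algebra.

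The substantive part is (iii). I would verify the first mixed axiom by expanding
\[
(q_{\alpha}(x)\blacktriangleright_{\alpha,\,\beta}p_{\beta}(y)-q_{\beta}(y)\blacktriangleright_{\beta,\,\alpha}p_{\alpha}(x))\circledast_{\alpha\beta,\,\gamma}q_{\gamma}(z)
\]
using the definitions of $\blacktriangleright$ and $\circledast$, then applying the fact that $p_{\alpha\beta}$ and $q_{\alpha\beta}$ are $\Omega$-pre-Possion morphisms (so they commute through $\rhd$ and $\ast$), and using $p_{\alpha}q_{\alpha}=q_{\alpha}p_{\alpha}$ to regroup the inner maps. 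After this bookkeeping every term will take the form $p_{\alpha}^{a}q_{\alpha}^{b}(x)\,\rhd_{\cdot,\cdot}$ or $\ast_{\cdot,\cdot}$-products with matching exponent profiles $(a,b)$ on $x,y,z$, at which point the first classical $\Omega$-pre-Possion compatibility identity can be applied. Re-bundling the maps back into $\blacktriangleright$'s and $\circledast$'s produces the BiHom-$\Omega$ compatibility in the required form. The second mixed axiom is handled in exactly the same way, using the second classical compatibility.

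The main obstacle is purely bookkeeping: keeping track of which subscripts of $p$ and $q$ appear on which argument, and in which order the maps are composed, so that after shuffling via commutativity one lands precisely on the shape where the classical $\Omega$-pre-Possion identity applies. No conceptual difficulty arises, but the argument is notationally heavy because each side has two summands in $\blacktriangleright$ and $\circledast$ that must be tracked individually. I would present the first compatibility in full and then remark that the second is strictly analogous, mirroring the style of Propositions~\ref{Yautwist-preLie} and \ref{Yautwist-zinbiel}.
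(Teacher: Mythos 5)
Your plan is correct and is exactly the route the paper intends: the paper omits this proof entirely, remarking only that it is "similar to Proposition~\ref{Yautwist-zinbiel}", and your reduction of the pre-Lie and zinbiel parts to Propositions~\ref{Yautwist-preLie} and~\ref{Yautwist-zinbiel} together with the direct expansion of the two mixed axioms is the intended argument. The bookkeeping you describe does close up: after applying the morphism property of $p_{\alpha\beta}$ and $q_{\beta\gamma}$ and the commutativity $p\circ q=q\circ p$, both sides of each mixed axiom reduce to the classical $\Omega$-pre-Possion identity evaluated at $p_{\alpha}^{2}q_{\alpha}(x)$, $p_{\beta}^{2}q_{\beta}(y)$, $q_{\gamma}^{2}(z)$.
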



\begin{thebibliography}{99}
 	\bibitem{Aguiar}
    M. Aguiar, Dendriform algebras relative to a semigroup, SIGMA Symmetry Integrability Geom. Methods Appl. $\bf{16}$, (2020), Paper No. 066, 15 pp.

     \bibitem{aguiar}
    M. Aguiar, Pre-Poisson algebras, Lett. Math. Phys. \textbf{54} (2000), 263-277.

    \bibitem{BiHom-PostLie}
    H. Adimi, T. Chtioui, S. Mabrouk and S. Massoud, Constructions and representation theory of BiHom-post-Lie algebras, Rend. Circ. Mat. Palermo (2) \textbf{72} (2023), 2137-2157.

    \bibitem{AC2}
    M. J. Arag\'{o}n Peri\~{n}\'{a}n and A. J. Calder\'{o}n Mart\'{i}n, On graded matrix Hom-algebras, Electron. J. Linear Algebra $\bf{24}$ (2012), 45-65.

    \bibitem{AC3}
    M. J. Arag\'{o}n Peri\~{n}\'{a}n and A. J. Calder\'{o}n Mart\'{i}n, Split regular Hom-Lie algebras, J. Lie Theory $\bf{25}$ (2015), 875-888.

    \bibitem{AizawaSaito}
    N. Aizawa and H. Sato, $q$-deformation of the Virasoro algebra with central extension, Phys. Lett. B \textbf{256} (1991), 185-190.

    \bibitem{bai}
    C. Bai, L. Guo and X. Ni, Nonabelian generalized Lax pairs, the classical Yang-Baxter equation and PostLie algebras, Comm. Math. Phys. \textbf{297} (2010), 553-596.

    \bibitem{BM}
    S. Benayadi and A. Makhlouf, Hom-Lie algebras with symmetric invariant nondegenerate bilinear forms, J. Geom. Phys. $\bf{76}$ (2014), 38-60.

    \bibitem{CG}
    S. Caenepeel and I. Goyvaerts, Monoidal Hom-Hopf algebras, Comm. Algebra $\bf{39}$ (2011), 2216-2240.

    \bibitem{CZ}
    T. L. Curtright and C.K. Zachos, Deforming maps for quantum algebras, Phys. Lett. B $\bf{243}$ (1990), 237-244.

    \bibitem{CWZ}
    Y. Chen, Z. Wang and L. Zhang, Integrals for monoidal Hom-Hopf algebras and their applications, J. Math. Phys. $\bf{54}$ (2013), no.7, 073515, 22 pp.

    \bibitem{CMPJ}
    M. Chaichian, P. Kulish and J. Lukierski, q-deformed Jacobi identity, q-oscillators and q-deformed infinite-dimensional algebras. Phys. Lett. B $\bf{237}$ (1990), 401-406.

    \bibitem{Das}
	A. Das, Twisted Rota-Baxter families and NS-family algebras, J. Algebra \textbf{612} (2022), 577-615.
			
     \bibitem{GL}
     L. Guo, Operated monoids, Motzkin paths and rooted trees, J. Algebraic Combin. $\bf{29}$ (2009), 35-62.
	
     \bibitem{gmmp}
	 G. Graziani, A. Makhlouf, C. Menini and F. Panaite, BiHom-associative algebras, BiHom-Lie algebras and BiHom-bialgebras, SIGMA Symmetry Integrability Geom. Methods Appl. \textbf{11} (2015), Paper 086, 34 pp.
	
	\bibitem{preLie-1}
   	 M. Gerstenhaber, The cohomology structure of an associative ring, Ann. of Math. \textbf{78} (1963), 267-288.
	
	\bibitem{HSS}
	M. Hassanzadeh, I. Shapiro and S. S\"{u}tl\"{u}, Cyclic homology for Hom-associative algebras, J. Geom. Phys. \textbf{98}	(2015), 40-56.
	
	\bibitem{LKQ}
	K. Q. Liu, Characterizations of the quantum Witt algebra, Lett. Math. Phys. \textbf{24} (1992), 257-265.
	
	\bibitem{bihomprelie}
	L. Liu, A. Makhlouf, C. Menini and F. Panaite, BiHom-pre-Lie algebras, BiHom-Leibniz algebras and Rota-Baxter operators on BiHom-Lie algebras, Georgian Math. J. \textbf{28} (2021), 581-594.
	
	\bibitem{BiHom-dendri}	
	L. Liu, A. Makhlouf, C. Menini and F. Panaite, Rota-Baxter operators on BiHom-associative algebras and related structures, Colloq. Math. \textbf{161} (2020), 263-294.
	
    \bibitem{makhloufrotabaxter}
    A. Makhlouf, Hom-dendriform algebras and Rota-Baxter Hom-algebras, Operads and universal algebra, Nankai Ser. Pure Appl. Math. Theor. Phys. Vol. 9, World Sci. Publ. Singapore, (2012), 147-171.

    \bibitem{ms}
    A. Makhlouf and S. D. Silvestrov, Hom-algebras structures, J. Gen. Lie Theory Appl. $\bf{2}$ (2008), 51-64.

	\bibitem{vallette}
    B. Vallette, Homology of generalized partition posets, J. Pure Appl. Algebra \textbf{208} (2007), 699-725.

    \bibitem{preLie-2}
    E. B. Vinberg, The theory of homogeneous convex cones, Tr. Mosk. Mat. Obs. \textbf{12} (1963), 303-358.

	 \bibitem{tri-family}
	 Y. Y. Zhang and X. Gao, Free Rota–Baxter family algebras and (tri)dendriform family algebras, Pacific J. Math. $\bf{301}$ (2019), 741-766.
		
	 \bibitem{prelief}
   	 Y. Y. Zhang and D. Manchon, Free pre-Lie family algebras, Ann. Inst. Henri Poincar\'{e} D \textbf{11} (2024), 331-361.

     \bibitem{Zhang}
     Y. Y. Zhang, J. Zhao and G.Q. Liu, Rota-Baxter family $\Omega$-associative conformal algebras and their cohomology theory, J. Math. Phys. \textbf{64} (2023), no.10, 28 pp.
\end{thebibliography}
\end{document}